\documentclass[12pt]{elsarticle}% Use utf-8 encoding for foreign characters

\makeatletter
\def\ps@pprintTitle{%
 \let\@oddhead\@empty
 \let\@evenhead\@empty
 \def\@oddfoot{}%
 \let\@evenfoot\@oddfoot}
\makeatother

\usepackage[utf8]{inputenc}

% Setup for fullpage use
\usepackage{fullpage}

% More symbols
\usepackage{amsmath}
\usepackage{amssymb}
\usepackage{latexsym}
\usepackage{amsthm}
\usepackage[retainorgcmds]{IEEEtrantools}

\usepackage{geometry}
\usepackage{graphicx}

\newtheorem{theorem}{Theorem}
\newtheorem{lemma}[theorem]{Lemma}
\newtheorem{corollary}[theorem]{Corollary}
\newtheorem{proposition}[theorem]{Proposition}

% If you want to generate a toc for each chapter (use with book)
\usepackage{minitoc}

% This is now the recommended way for checking for PDFLaTeX:
\usepackage{ifpdf}

%\journal{Discrete Mathematics} 

\newcommand{\ucon}{\textnormal{con}^+}
\newcommand{\lcon}{\textnormal{con}^-}
\newcommand{\con}{\textnormal{con}}

\newtheorem{nclaim}{Claim}{\bf}{}
\newtheorem{observation}{Observation}{\bf}{}

\begin{document}

\begin{frontmatter}

%% Title, authors and addresses

%% use the tnoteref command within \title for footnotes;
%% use the tnotetext command for the associated footnote;
%% use the fnref command within \author or \address for footnotes;
%% use the fntext command for the associated footnote;
%% use the corref command within \author for corresponding author footnotes;
%% use the cortext command for the associated footnote;
%% use the ead command for the email address,
%% and the form \ead[url] for the home page:
%%
%% \title{Title\tnoteref{label1}}
%% \tnotetext[label1]{}
%% \author{Name\corref{cor1}\fnref{label2}}
%% \ead{email address}
%% \ead[url]{home page}
%% \fntext[label2]{}
%% \cortext[cor1]{}
%% \address{Address\fnref{label3}}
%% \fntext[label3]{}

\title{The strong convexity spectra of grids\tnoteref{t1}}

\tnotetext[t1]{This research was supported by CONACyT-M\'exico under Projects 178395 and 166306 and PAPIIT-M\'exico under Projects IN101912 and IN104915.}
%% use optional labels to link authors explicitly to addresses:
%% \author[label1,label2]{<author name>}
%% \address[label1]{<address>}
%% \address[label2]{<address>}

\author[IMATE]{Gabriela Araujo-Pardo}
\ead{garaujo@matem.unam.mx}
\author[IMATE,FC]{C\'esar Hern\'andez-Cruz\corref{cor1}}
\ead{cesar@matem.unam.mx}
\author[IMATE]{Juan Jos\'e Montellano-Ballesteros}
\ead{juancho@matem.unam.mx}

\address[IMATE]{Instituto de Matem\'aticas, Universidad Nacional Aut\'onoma de M\'exico, \\ Area de la Investigaci\'on Cient\'ifica, Circuito Exterior, C.U. \\Coyoac\'an, C.P. 04510, M\'exico, D.F.}
\address[FC]{Facultad de Ciencias, Universidad Nacional Aut\'onoma de M\'exico, \\ Ciudad Universitaria, C.P. 04510,  M\'exico, D.F.}

\cortext[cor1]{Corresponding author, Tel. (+52)(55)56225427}

\begin{abstract}
Let $D$ be a connected oriented graph. A set $S \subseteq V(D)$ is convex in $D$ if, for every pair of vertices $x, y \in S$, the vertex set of every $xy$-geodesic, ($xy$ shortest directed path) and every $yx$-geodesic in $D$ is contained in $S$. The convexity number, $\con(D)$, of a non-trivial oriented graph, $D$, is the maximum cardinality of a proper convex set of $D$.   The strong convexity spectrum of the graph $G$, $S_{SC} (G)$, is the set $\{\con(D) \colon\ D \textnormal{ is a strong orientation of G} \}$.    In this paper we prove that the problem of determining the convexity number of an oriented graph is $\mathcal{NP}$-complete, even for bipartite oriented graphs of arbitrary large girth, extending previous known results for graphs.   We also determine $S_{SC} (P_n \Box P_m)$, for every pair of integers $n,m \ge 2$.
\end{abstract}

\begin{keyword}
Convexity number \sep Convex set \sep Spectrum \sep Oriented graph \sep Grid
%% keywords here, in the form: keyword \sep keyword

%% MSC codes here, in the form: \MSC code \sep code
%% or \MSC[2008] code \sep code (2000 is the default)
\MSC 05C	12 \sep 05C20
\end{keyword}

\end{frontmatter}

\section{Introduction}
\label{intro}

Graphs considered in the paper are finite, without loops or multiple edges. In a graph $G = (V , E)$, $V$ and $E$ ($V(G)$ and $E(G)$) denote the vertex set and the edge set of $G$, respectively.    For undefined concepts and notation we refer the reader to \cite{BJD}.

For two vertices $u$ and $v$ in a graph $G$, a $uv$-geodesic is a shortest path between $u$ and $v$.   A set $S$ of vertices of $G$ is convex if the vertices of every $uv$-geodesic is contained in $S$ for every $u, v \in S$.   According to Duchet, convexity in graphs has been studied since the early seventies, when abstract convexity was studied in different contexts (\cite{D} is an outdated, but very nice, survey on the subject).   Convexity in graphs has taken many different directions, and different related parameters have been defined and widely studied, e.g., the hull number \cite{ES}, the geodetic number \cite{HLT}, and the convexity number \cite{CWZ} of a graph.   Recent papers on this subjects include \cite{ACGNSS,DPRS2,DPRS}, where the decision problem associated with these three parameters are shown to be $\mathcal{NP}$-complete, even when restricted to bipartite graphs, and in the case of the geodetic number, even when restricted to bipartite chordal graphs.

Chartrand, Fink and Zhang generalized the concept of convexity to oriented graphs, and defined the convexity number for an oriented graph; oriented analogues of the hull number and geodetic number are defined in \cite{CZ}.   We focus on the convexity number of oriented graphs; although this generalization was introduced in 2002, and the proof given by Gimbel in \cite{G} on the $\mathcal{NP}$-completeness of determining the convexity number of an arbitrary graph is one of the shortest and neatest $\mathcal{NP}$-completeness proofs ever done, the problem of determining the convexity number of an oriented graph was not known to be $\mathcal{NP}$-complete until now.   We prove that determining the convexity number of an oriented graph is $\mathcal{NP}$-complete even when restricted to bipartite graphs of girth $g$,  with $g \ge 6$.

An \emph{oriented graph} is an orientation of some graph. In an oriented graph $D = (V , E)$, $V$ and $E$ ($V(D)$ and $E(D)$) denote the vertex set and the edge set of $D$, respectively. An \emph{oriented subgraph} $D' = (V', E')$ of an oriented graph $D = (V , E)$ is an oriented graph with $V' \subseteq V$ and $E' \subseteq E$. An oriented graph is \emph{connected} if its underlying graph is connected. A \emph{directed path} is a sequence $(v_1,v_2,...,v_k)$ of vertices of an oriented graph $D$ such that $v_1,v_2,...,v_k$ are distinct and $(v_i,v_{i+1}) \in E(D)$ for $i \in \{ 1, 2, . . . , k-1 \}$.   An oriented graph is \emph{strongly connected} (or \emph{strong}) if for every pair of distinct vertices $u$ and $v$, there exists a directed path from $u$ to $v$.   The \emph{girth} of an oriented graph is the length of a shortest directed cycle.

A $uv$-\emph{geodesic} in a digraph $D$ is a shortest $uv$-directed path and its length is $d_D (u, v)$.   A nonempty subset, $S$, of the vertex set of a digraph, $D$, is called a \emph{convex set} of $D$ if, for every $u, v \in S$, every vertex lying on a $uv$- or $vu$-geodesic belongs to $S$. For a nonempty subset, $A$, of $V(D)$, the \emph{convex hull}, $[A]$, is the minimal convex set containing $A$. Thus $[S] = S$ if and only if $S$ is convex in $D$. The \emph{convexity number}, $\con(D)$, of a digraph $D$ is the maximum cardinality of a proper convex set of $D$. A \emph{maximum convex set} $S$, of a digraph $D$, is a convex set with cardinality $\con(D)$. Since every singleton vertex set is convex in a connected oriented graph $D$, $1\le \con(D) \le n-1$. The \emph{degree}, $d(v)$, of a vertex $v$ in an oriented graph is the sum of its in-degree and out-degree; this is, $d(v) = d^-(v) + d^+(v)$.   A vertex, $v$, is an \emph{end-vertex} if $d(v) = 1$. A \emph{source} is a vertex having positive out-degree and in-degree $0$, while a \emph{sink} is a vertex having positive in-degree and out-degree $0$. For a vertex $v$ of $D$, the in-neighborhood of $v$, $N^-(v)$, is the set $\{ x \colon\ (v, x) \in E(D) \}$ and the out-neighborhood of $v$, $N^+ (v)$, is the set $\{ x \colon\ (x, v) \in E(D)\}$.   A vertex $v$ of $D$ is a \emph{transitive vertex} if $d^+(v)>0$, $d^-(v) > 0$ and, for every $u \in N^+(v)$ and $w \in N^-(v)$, $(w, u) \in E(D)$.

For graphs $G$ and $H$, their {\em cartesian product}, $G \Box H$, is the graph with vertex set $V(G) \times V(H)$, and such that two vertices $(g_1,h_1)$ and $(g_2,h_2)$ are adjacent in $G \Box H$ if either $g_1=g_2$ and $h_1 h_2$ is an edge in $H$, or $h_1=h_2$ and $g_1 g_2$ is an edge in $G$.   For a vertex $g$ of $G$, the subgraph of $G \Box H$ induced by the set $\{ (g,h) \colon\ h \in H \}$ is called an $H$-fiber and is denoted by $^gH$. Similarly, for $h \in H$, the $G$-fiber, $G^h$, is the subgraph induced by $\{ (g,h) \colon g \in G \}$. We will have occasion to use the fiber notation $G^h$ and $^gH$ to refer instead to the set of vertices in these subgraphs; the meaning will be clear from the context. It is clear that all $G$-fibers are isomorphic to $G$ and all $H$-fibers are isomorphic to $H$.

As mentioned, the concept of convexity number of an oriented graph was first introduced by Chartrand, Fink and Zhang in \cite{CFZ}, where they proved the following pair of theorems.

\begin{theorem}
Let $D$ be a connected oriented graph of order $n \ge 2$.   Then $\con (D) = n-1$ if and only if $D$ contains a source, a sink or a transitive vertex.
\end{theorem}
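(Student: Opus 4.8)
The plan is to reformulate the equality $\con(D) = n-1$ and then prove both implications by analysing which vertices can occur internally on a shortest directed path. Since $1 \le \con(D) \le n-1$ for every connected oriented graph of order $n \ge 2$, we have $\con(D) = n-1$ if and only if $D$ admits a proper convex set of cardinality $n-1$; equivalently, if and only if $V(D) \setminus \{v\}$ is convex for some $v \in V(D)$. So the theorem reduces to showing that $V(D) \setminus \{v\}$ is convex for some vertex $v$ if and only if $D$ contains a source, a sink, or a transitive vertex. In both directions the vertex $v$ will be the source / sink / transitive vertex, and the crux is that such a $v$ never lies internally on a geodesic joining two other vertices.

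For the ``if'' direction, assume $v$ is a source, a sink, or a transitive vertex, and set $S = V(D) \setminus \{v\}$. If $v$ is a source, then $d^-(v) = 0$; since every internal vertex of a directed path has an incoming arc along that path, $v$ cannot be internal to any directed path, and in particular to no $xy$- or $yx$-geodesic with $x, y \in S$, so $S$ is convex. The sink case is symmetric via $d^+(v) = 0$. If $v$ is transitive, suppose some $xy$-geodesic $P$ with $x, y \in S$ passes through $v$; since $x, y \ne v$, the vertex $v$ is internal to $P$, so $P$ contains a subpath $w \to v \to u$ with $w$ an in-neighbour and $u$ an out-neighbour of $v$, and $w \ne u$ because $P$ has pairwise distinct vertices. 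Transitivity of $v$ gives $(w, u) \in E(D)$, so deleting $v$ from $P$ and using the arc $(w,u)$ produces an $xy$-path strictly shorter than $P$ --- contradicting that $P$ is a geodesic. Hence $S$ is convex in all three cases, and $\con(D) = n - 1$.

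For the ``only if'' direction, let $v$ be a vertex with $S = V(D) \setminus \{v\}$ convex. Because $D$ is connected and $n \ge 2$, $v$ has at least one neighbour, so $d^-(v) = 0$ forces $d^+(v) > 0$ (and then $v$ is a source), while $d^+(v) = 0$ forces $d^-(v) > 0$ (and then $v$ is a sink). Assume then that $d^-(v) > 0$ and $d^+(v) > 0$; I claim $v$ is transitive. Fix an in-neighbour $w$ and an out-neighbour $u$ of $v$; both are distinct from $v$ since $D$ has no loops. Since $D$ is an oriented graph it contains no digon, so $w \ne u$. If $(w, u) \notin E(D)$, then $d_D(w, u) \ge 2$; but $w \to v \to u$ is a $wu$-walk of length $2$, so $d_D(w, u) = 2$ and $w \to v \to u$ is a $wu$-geodesic. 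As $w, u \in S$ and $S$ is convex, this forces $v \in S$, a contradiction. Therefore $(w, u) \in E(D)$ for every such $w$ and $u$, i.e.\ $v$ is a transitive vertex.

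Overall the proof is short; the only place demanding a moment's care is the ``only if'' direction, where one must check that the length-$2$ walk $w \to v \to u$ is actually a geodesic, which follows precisely because the absence of the arc $(w,u)$ forces $d_D(w, u) = 2$. I do not expect a genuine obstacle here --- the whole argument is essentially bookkeeping about internal vertices of shortest directed paths together with the no-digon property of oriented graphs.
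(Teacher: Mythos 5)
Your proof is correct and complete; note that the paper does not prove this statement at all but quotes it from Chartrand, Fink and Zhang \cite{CFZ}, and your argument is exactly the standard one: a source/sink/transitive vertex can never be an internal vertex of a geodesic between two other vertices, and conversely the no-digon property of oriented graphs forces $w\to v\to u$ to be a $wu$-geodesic whenever the shortcut arc $(w,u)$ is absent. The only caveat is notational: the paper's definitions of $N^{-}(v)$ and $N^{+}(v)$ are evidently swapped by a typo, and you have (rightly) used the standard meaning of ``transitive vertex,'' which is the one under which the theorem is true.
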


\begin{theorem} \label{no2}
There is no connected graph of order at least $4$ with convexity number $2$.
\end{theorem}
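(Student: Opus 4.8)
\noindent The plan is to argue by contradiction: assume $D$ is a connected oriented graph of order $n \ge 4$ with $\con(D) = 2$, and produce a proper convex subset of $V(D)$ of cardinality at least $3$. The first step is to record the only structural fact about $D$ the argument will need. Since $\con(D) = 2$ and $n - 1 \ge 3$, we have $\con(D) \ne n - 1$, so the preceding theorem tells us $D$ has no sink; as $D$ is connected and $n \ge 2$, this forces $d^{+}(w) \ge 1$ for every vertex $w$.

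The engine of the proof is an observation about out-closures. For a vertex $u$, let $R^{+}(u)$ be the set consisting of $u$ together with every vertex reachable from $u$ by a directed path. I would first verify that $R^{+}(u)$ is convex: if $x, y \in R^{+}(u)$ and $z$ lies on some $xy$-geodesic or some $yx$-geodesic, then $z$ is reachable from $x$ or from $y$, and hence from $u$, so $z \in R^{+}(u)$. I would then verify that $|R^{+}(u)| \ge 3$: pick $a$ with $(u, a) \in E(D)$ and then $b$ with $(a, b) \in E(D)$, which is possible since $D$ has no sink; the vertices $u, a, b$ are pairwise distinct because $D$ has no loops and, being an oriented graph, no pair of opposite arcs, so $(u, a) \in E(D)$ rules out $(a, u) \in E(D)$ and hence $b \ne u$. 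Thus, as soon as we find a vertex $u$ with $R^{+}(u) \ne V(D)$, we are done: $R^{+}(u)$ is then a proper convex set of size at least $3$, contradicting $\con(D) = 2$.

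Locating such a $u$ is the only place the hypothesis $\con(D) = 2$ (rather than $\con(D) \le 1$) enters. Since $\con(D) = 2$, there is a convex set $S = \{u, v\}$ with $u \ne v$. Convexity of $S$ implies that no $uv$-geodesic and no $vu$-geodesic has an internal vertex (an internal vertex would lie outside $\{u, v\}$), so $d(u, v) \in \{1, \infty\}$ and $d(v, u) \in \{1, \infty\}$. They cannot both equal $1$, for that would mean $(u, v) \in E(D)$ and $(v, u) \in E(D)$; hence at least one of them is infinite, and after possibly interchanging the labels of $u$ and $v$ we may assume $d(u, v) = \infty$. Then $v \notin R^{+}(u)$, so $R^{+}(u)$ is a proper subset of $V(D)$, and the previous paragraph yields the contradiction.

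I do not expect a real obstacle here; the whole argument is short. The single point requiring a little care is the bound $|R^{+}(u)| \ge 3$, and it is precisely there that both hypotheses are spent: "$n \ge 4$", through the preceding theorem, to guarantee that $D$ has no sink, and "oriented graph", to forbid opposite arcs. As a sanity check, the argument also disposes of the strongly connected case cleanly: if $D$ is strong then $R^{+}(u) = V(D)$ for every $u$, but then $D$ has no convex $2$-set at all, since any two distinct vertices are joined by directed paths in both directions, at least one of which is a geodesic of length $\ge 2$.
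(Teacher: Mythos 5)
The paper does not prove this statement: it is quoted verbatim from Chartrand--Fink--Zhang \cite{CFZ} as a known result, so there is no in-paper proof to compare yours against. Judged on its own, your argument is correct and complete. The three ingredients all check out: (i) the out-reachability set $R^{+}(u)$ is convex because any vertex on a geodesic between two vertices reachable from $u$ is itself reachable from $u$; (ii) the hypothesis $n\ge 4$ enters exactly where you say, via the preceding theorem, to rule out sinks and hence give $d^{+}(w)\ge 1$ everywhere, which together with the absence of loops and digons yields $|R^{+}(u)|\ge 3$; and (iii) a convex $2$-set $\{u,v\}$ forces $d(u,v),d(v,u)\in\{1,\infty\}$ with at most one equal to $1$, so after relabelling $R^{+}(u)$ misses $v$ and is a proper convex set of size at least $3$, contradicting $\con(D)=2$. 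One peripheral remark: the statement as printed says ``connected graph,'' but it must be read as ``connected oriented graph'' (as you do), since for undirected graphs the claim fails --- $C_4$ is a connected graph of order $4$ whose only proper convex sets of size greater than $1$ are pairs of adjacent vertices, so its convexity number is $2$.
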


Taking an interesting direction for the subject of convexity in oriented graphs, in \cite{TYF}, Tong, Yen and Farrugia introduced the concepts of convexity spectrum and strong convexity spectrum of a graph.   For a nontrivial connected graph $G$, we define the \emph{convexity spectrum}, $S_C (G)$, of a graph $G$, as the set of convexity numbers of all orientations of $G$, and the \emph{strong convexity spectrum}, $S_{SC}(G)$, of a graph $G$ as the set of convexity numbers of all strongly connected orientations of $G$.   If $G$ has no strongly connected orientation, then $S_{SC}(G)$ is empty.   The \emph{lower orientable convexity number}, $\lcon (G)$, of $G$ is defined to be $\min S_C(G)$ and the \emph{upper orientable convexity number}, $\ucon(G)$, is defined to be $\max S_C(G)$.   Hence, for every nontrivial connected graph $G$ of order $n$, $1\le \lcon(G) \le \ucon(G) \le n-1$.

Tong, Yen and Farrugia calculated the convexity and strong convexity spectra of complete graphs and also constructed, for every $a \in \mathbb{Z}^+$, a graph $G$ with convexity spectrum $\{ a, n-1 \}$.   It is not very surprising that the strong convexity spectra of $K_n$ for $n \ge 7$ is a ``large'' set, $\{ 1, 3, 5,6, \dots, n-2 \}$, missing only $2, 4$ and $n-1$.   Nonetheless, we find very surprising that, in one hand, Tong and Yen proved in \cite{TY} that $S_{SC} (K_{r,s}) = \{ 1 \}$  for every pair of integers $2 \le r, s$, and, in the other hand, we prove that the strong convexity spectrum of an $n \times m$ grid, for any pair of integers $n,m \ge 5$, only lacks the set of integers $\{ 2, 3, 5, n-1, n-2, n-3, n-4, n-5 \}$.   So, an interesting question arises from the previous observation: What property in a graph determines a large strong convexity spectrum?   We can discard regularity and high degrees; grids are not regular graphs, and have both small maximum and minimum degree.   Although we did not find what is so special about grids in terms of convexity, we managed to calculate the strong convexity spectra of all grids.

The rest of this paper is ordered as follows.

Section \ref{SNP} is devoted to prove the $\mathcal{NP}$-completeness of the problem of determining the convexity number of a given oriented graph; the problem remains $\mathcal{NP}$-complete even when restricted to bipartite oriented graphs of arbitrarily large girth.   In Section \ref{SGrids}, we prove some basic results on the convexity number of general oriented graphs, and also introduce a concept of main importance to this work: The whirlpool orientation of a grid.   Using this concept, we prove that $1 \in S_{SC} (G)$ for any grid $G$.   We finish the section with a result excluding some values from the strong convexity spectra of certain grids.    Section \ref{SCSSG} is devoted to calculate the strong convexity spectra of $n \times 2$ and $n \times 3$ grids for every integer $n \ge 2$.  In Section \ref{SMain}, the strong convexity spectra of $n \times m$ grids for every pair of integers $n, m \ge 4$ is calculated.

\section{$\mathcal{NP}$-completeness}
\label{SNP}

We define the problem \textsc{Oriented Convexity Number} as follows.   Given an ordered pair $(D,k)$, consisting of an oriented graph $D$ and a positive integer $k$, determine whether $D$ has a convex set of size at least $k$.

This first section is devoted to prove the $\mathcal{NP}$-completeness of \textsc{Oriented Convexity Number}.

\begin{theorem}\label{redu}
\textsc{Oriented Convexity Number} restricted to bipartite oriented graphs of girth $6$ is $\mathcal{NP}$-complete.
\end{theorem}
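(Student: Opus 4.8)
The plan is to give a polynomial reduction from a known $\mathcal{NP}$-complete problem. The natural source is \textsc{Convexity Number} for undirected graphs, which Gimbel showed to be $\mathcal{NP}$-complete; even better, the bipartite-graph versions in \cite{DPRS2,DPRS} suggest that we should try to reduce from something like \textsc{Independent Set} or \textsc{Clique}, exploiting the fact that in an oriented graph we have total control over which vertices lie on geodesics by controlling directions. First I would fix membership in $\mathcal{NP}$: given $(D,k)$ and a candidate set $S$ with $|S|\ge k$, one checks convexity by computing, for every ordered pair $u,v\in S$, the distance $d_D(u,v)$ via BFS and then verifying that every vertex on some $uv$-geodesic lies in $S$ — this is polynomial (one can test "is $w$ on a $uv$-geodesic?" by checking $d_D(u,w)+d_D(w,v)=d_D(u,v)$), and $S$ proper is trivially checked.

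For the hardness direction I would build, from an instance $G$ of \textsc{Clique} (or \textsc{Independent Set}), an oriented graph $D_G$ together with an integer $k$ so that $D_G$ has a convex set of size $\ge k$ iff $G$ has a clique of the required size. The key gadget idea: represent each vertex $v_i$ of $G$ by a vertex (or a small constant-size bipartite gadget) in $D_G$, and for each non-edge $v_iv_j$ of $G$ introduce a directed structure — a short directed path of length $2$ or $3$ through a fresh ``forcing'' vertex — so that any convex set containing the representatives of both $v_i$ and $v_j$ is forced to swallow extra vertices and thereby become non-proper (or too large in an uncontrolled way). Conversely, the representatives of a clique together with a fixed ``background'' set form a convex set because no forcing path is activated. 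One must choose the gadgets so that the whole construction is bipartite and every directed cycle has length at least $6$; since we get to orient all edges, girth $6$ is arranged by making every gadget path short and every cycle pass through enough gadget vertices — subdividing edges (which preserves bipartiteness and only lengthens cycles) is the standard trick to push the girth up to $6$, and indeed to any prescribed $g$, which is why the theorem extends to arbitrarily large girth.

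The main obstacle I expect is the ``convexity is global'' problem: adding a forcing path between $v_i$ and $v_j$ can have unintended side effects on geodesics between other pairs of representatives, so the gadgets must be engineered so that each forcing path is the \emph{unique} shortest path between its two endpoints and is not a sub-path of any other relevant geodesic — this is where the girth and bipartiteness conditions actually help, since long even girth forces distances to behave rigidly. The second delicate point is pinning down $k$ and proving the ``background'' set is genuinely convex and genuinely proper; I would reserve one distinguished vertex that is never forced in, guaranteeing properness, and compute $k$ as $|V(D_G)|$ minus the number of ``optional'' vertices plus the clique size. Once the gadget is fixed, verifying the iff is a finite case analysis on which geodesics exist, and the reduction is clearly polynomial in $|V(G)|+|E(G)|$.
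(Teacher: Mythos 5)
Your plan has the right skeleton (membership in $\mathcal{NP}$ via distance checks, reduction from \textsc{Clique}, constant-size bipartite gadgets per vertex, a forcing mechanism that punishes non-adjacent pairs), and this is indeed the paper's strategy at that level of abstraction. But there is a genuine gap at the heart of the hardness direction: you never resolve the ``convexity is global'' problem you yourself flag, and your specific design choice --- one forcing path \emph{per non-edge} --- is exactly where it bites. If a convex set containing the representatives of a non-adjacent pair merely ``swallows extra vertices,'' nothing stops it from being a proper convex set of the form (clique gadgets $\cup$ a few forcing vertices), which can be larger than the intended target and destroys the iff. What is needed, and what the paper supplies, is a \emph{cascade}: the forcing structure must be such that absorbing any forcing vertex forces the convex hull to become all of $V(D)$. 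The paper achieves this with a single \emph{global} directed path $z_1z_2z_3z_4$ attached by arcs $(x_u,z_1)$ and $(z_4,y_u)$ to every vertex gadget; once any $z_i$ enters a convex set, the $z_4z_1$-geodesics through every hexagon drag in the entire graph (Claim 2), so the only proper convex sets of size $\ge 2$ are unions of hexagons over cliques. Dually, for each \emph{edge} $uv$ the paper adds direct arcs $(x_u,y_v)$, $(x_v,y_u)$ so that geodesics between gadgets of adjacent vertices never leave the union of the two hexagons --- your plan has no analogue of this, yet without it even a clique's gadgets would fail to be convex.

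Two smaller but real issues: your girth argument via ``subdividing edges'' is not safe here, since subdivision changes all distances and hence which sets are convex; the paper instead builds girth $6$ in from the start (the gadgets are directed hexagons, and every other directed cycle has length $\ge 8$), and gets larger girth by using longer gadget cycles and a longer $z$-path, re-proving the claims rather than post-processing the instance. And your formula for $k$ is left vague, whereas with the hexagon gadgets one gets the clean correspondence $\con(D)=6\,\omega(G)$, so $k'=6k$. As written, the proposal identifies the right difficulties but does not contain the construction that overcomes them.
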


\begin{proof}
Let $D$ be an oriented graph.   Given a subset $C \subseteq V(D)$, it can be verified in polynomial time whether $C$ is a convex set.   Hence, \textsc{Oriented Convexity Number} is in $\mathcal{NP}$.

In order to prove $\mathcal{NP}$-hardness (and hence $\mathcal{NP}$-completness), we reduce an instance $(G,k)$ of the well-known $\mathcal{NP}$-complete problem \textsc{Clique} to an instance $(D,k')$ of \textsc{Oriented Convexity Number} such that $\omega(G) \ge k$ if and only if $\con(D) \ge k'$, the encoding length of $(D,k')$ is polinomially bounded in terms of the encoding length of $(G,k)$, and $D$ is bipartite.

Let $(G,k)$ be an instance of \textsc{Clique}.   Let us assume that $k \ge 3$ and $G$ is connected; we construct $D$ as follows.   For every vertex $u$ of $G$ we create a directed hexagon, $H_u$, with two antipodal distinguished vertices $x_u$ and $y_u$.   For every edge $uv \in E(G)$ we add the arcs $(x_u,y_v)$ and $(x_v,y_u)$ to $D$.   We create four additional vertices $z_1, z_2, z_3, z_4$ with arcs $(z_i, z_{i+1})$ for $1 \le i \le 3$ and arcs $(x_u, z_1), (z_4, y_u)$ for every $u \in V(G)$.

\begin{figure}
\begin{center}
\includegraphics[width=\textwidth]{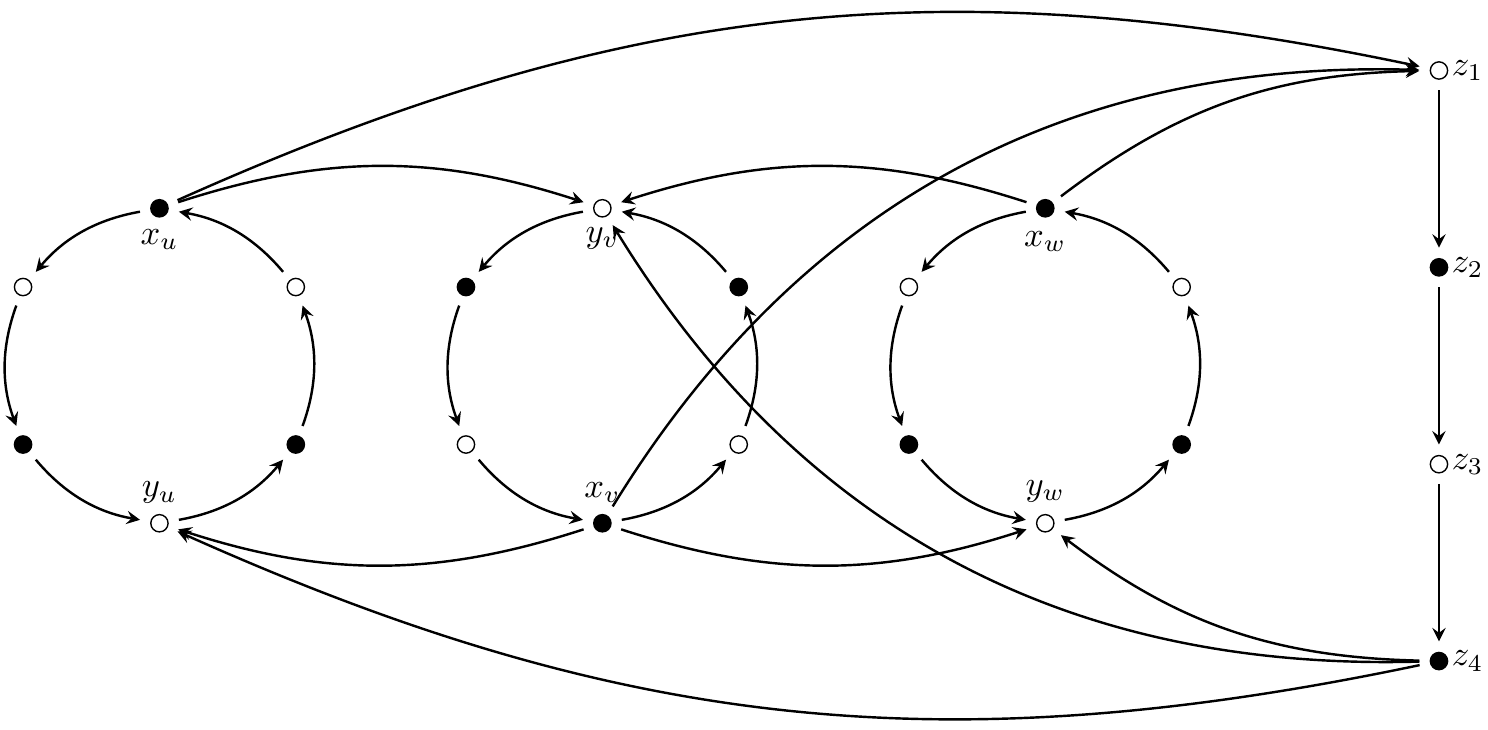}
\caption{The digraph $D$ of Theorem \ref{redu} when $G$ is $P_3 = (u,v,w)$.} \label{reduFig}
\end{center}
\end{figure}

Clearly, $|V(D)| = 6|V(G)| + 4$ and $|A(D)| = 8|V(G)| + 2|E(G)|+ 3$.   It is direct to observe that the digraph $D$ is bipartite and strongly connected.   In Figure \ref{reduFig} the classes of the bipartition are given by the vertices of the same color (black and white).   We also claim the following statements to hold.

\begin{nclaim} \label{C1}
Let $u$ be a vertex in $G$ and let $C$ be a convex set of $D$ with $|C| \ge 2$.   If $C \cap V(H_u) \ne \varnothing$, then $V(H_u) \subseteq C$.
\end{nclaim}

\begin{nclaim} \label{C2}
Let $C$ be a convex set of $D$ with $|C| \ge 2$.  If $z_i \in C$ for $1 \le i \le 4$, then $C = V(D)$.
\end{nclaim}

\begin{nclaim}\label{C3}
Let $u, v \in V(G)$ be such that $d_G (u,v) \ge 2$.   If $C$ is a convex set of $D$ such that $C \cap V(H_u) \ne \varnothing \ne C \cap V(H_v)$, then $C=V(D)$.
\end{nclaim}

\begin{nclaim} \label{C4}
If $S$ is a clique of $G$, then $C=\bigcup_{v \in S} V(H_v)$ is a convex set of $D$.
\end{nclaim}

It follows from Claims \ref{C1}-\ref{C4} that $C$ is a proper convex set of $D$ with $|C| \ge 2$, if and only if there exists a clique $S$ in $G$ such that $C=\bigcup_{v \in S} V(H_v)$.   Considering a maximum clique of $G$, and a maximum proper convex set of $D$, we obtain $\omega(G) = 6 \con(D)$.   Hence, if $k'=6k$, then $G$ contains a clique of size at least $k$ if and only if $D$ contains a convex set of size at least $k'$.   Since the encoding length of $(D,k')$ is linearly bounded in terms of the encoding length of $(G,k)$,  \textsc{Oriented Convexity Number} is $\mathcal{NP}$-complete.

\begin{proof}[of Claim \ref{C1}]
We will consider two cases.   First, assume that $|C \cap V(H_u)| \ge 2$.   Let $w_1$ and $w_2$ be vertices in $C \cap V(H_u)$.   Clearly, $w_1 H_u w_2$ is a $w_1w_2$-geodesic in $D$ and $w_2 H_u w_1$ is a $w_2w_1$-geodesic in $D$.   Hence, $V(H_u) \subseteq C$.

For the second case suppose that $w_1 \in V(H_u)$ and $w_2 \in C \setminus V(H_u)$.   Therefore, every $w_1w_2$-directed path in $D$ uses the vertex $x_u$, and every $w_2w_1$-directed path in $D$ uses the vertex $y_u$.   Thus, $|C \cap V(H_u)| \ge 2$ and we are back to the first case.
\end{proof}

\begin{proof}[of Claim \ref{C2}]
Suppose first that $z_j \in C$ with $i \ne j$.   Assume without loss of generality that $i < j$.   Every $z_j z_i$-directed path in $D$ uses the vertices $z_1$ and $z_4$, thus $z_1, z_4 \in C$.   For every $u \in V(G)$, $(z_4, y_u) \cup (y_u H_u x_u) \cup (x_u, z_1)$ is a $z_4 z_1$-geodesic in $D$.   We conclude from Claim \ref{C1} that  $V(H_u) \subseteq C$ for every $u \in V(G)$, and it follows that $C = V(G)$.

If $z_i \in C$ for $1 \le i \le 4$ and $w \in C$ for some $w \in V(D) \setminus \{z_1, z_2, z_3, z_4\}$, then every $wz_i$-directed path in $D$ uses the vertex $z_1$ and every $z_iw$-directed path in $D$ uses the vertex $z_4$.   Thus, $z_1, z_4 \in C$ and we are done.
\end{proof}

\begin{proof}[of Claim \ref{C3}]
It follows from Claim \ref{C1} that $V(H_u) \cup V(H_v) \subseteq C$.   Since $d_G(u,v) \ge 2$, it is easy to observe that $(x_u, z_1, z_2, z_3, z_4, y_v)$ is an $x_u y_v$-geodesic in $D$.   Hence, $z_1 \in C$ and Claim \ref{C2} guarantees $C=V(D)$.
\end{proof}

\begin{proof}[Proof of Claim \ref{C4}]
It is direct to verify that $V(H_u)$ is a convex set of $D$ for every $u \in V(G)$.   Let $|S| \ge 2$ and $u,v \in S$.   If $w_1 \in V(H_u)$ and $w_2 \in V(H_v)$, then every $w_1 w_2$-directed path in $D$ uses the vertices $x_u$ and $y_v$; moreover, it must contain an $x_uy_v$-directed path.   Our claim follows from noting that $(x_u, y_v) \in A(D)$. 
\end{proof}
\end{proof}

It is easy to observe that the directed $6$-cycles in the previous construction can be replaced by directed $2n$-cycles, and the directed path $(z_1, \dots, z_4)$ can be replaced by a directed path of length $n$, in order to get the result for an oriented bipartite graph of arbitrary large girth.

\section{Grids} \label{SGrids}

We begin this section with two straightforward lemmas regarding convex sets in strong oriented graphs.   The first lemma is a direct observation, so the proof will be omitted.

\begin{lemma} \label{girthcon}
Let $D$ be a strongly connected oriented graph.  If $C \subseteq V(D)$ is a convex set such that $|C| \ge 2$, then $C$ induces a strong subdigraph of $D$.    Therefore, $\min (S_{SC} (G) \setminus \{ 1 \}) \ge g(G)$, where $g(G)$ stands for the girth of $G$.
\end{lemma}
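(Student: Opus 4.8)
The plan is to establish the two assertions in turn: the structural claim that a convex set of size at least two induces a strong subdigraph, and then the numerical consequence for $S_{SC}(G)$, which will follow quickly from the first part together with the elementary fact that a nontrivial strong digraph contains a directed cycle.

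For the first assertion I would fix an arbitrary pair of vertices $u, v \in C$ and produce a $uv$-directed path lying entirely inside $C$. Since $D$ is strong there is a $uv$-directed path, hence a $uv$-geodesic $P$; convexity of $C$ forces $V(P) \subseteq C$. Because the subdigraph $D[C]$ induced by $C$ contains every arc of $D$ whose two endpoints lie in $C$, and every arc of $P$ joins two vertices of $V(P) \subseteq C$, the whole of $P$ lies in $D[C]$. Applying the same argument to the pair $v, u$ yields a $vu$-directed path inside $D[C]$. As $u$ and $v$ were arbitrary, $D[C]$ is strongly connected.

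For the second assertion I would argue as follows. Let $c \in S_{SC}(G) \setminus \{1\}$; then $c \ge 2$ and there is a strong orientation $D$ of $G$ with $\con(D) = c$, and hence a proper convex set $C$ of $D$ with $|C| = c \ge 2$. By the first part $D[C]$ is a strong digraph on at least two vertices, so it contains a directed cycle $\gamma$, and trivially $|V(\gamma)| \le |C| = c$. Now $\gamma$ is a directed cycle of $D$, hence a cycle of the underlying graph $G$, so its length is at least $g(G)$. Therefore $c \ge |V(\gamma)| \ge g(G)$. Since $c$ was an arbitrary element of $S_{SC}(G) \setminus \{1\}$, we conclude $\min\bigl(S_{SC}(G) \setminus \{1\}\bigr) \ge g(G)$, the inequality being vacuous when that set is empty.

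The only step demanding any care — and the nearest thing to an obstacle — is the observation that a strong digraph on at least two vertices must contain a directed cycle: since the digraph is strong and nontrivial, every vertex has positive out-degree, so starting from any vertex and repeatedly following out-arcs produces a closed directed walk, which contains a directed cycle. Everything else is immediate from the definitions of geodesic, convex set, induced subdigraph, and girth, which is why the lemma is stated as a direct observation.
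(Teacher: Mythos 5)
Your proof is correct and is essentially the argument the paper has in mind (the paper omits the proof, calling the lemma a direct observation): geodesics between vertices of $C$ lie in $C$ by convexity and hence in $D[C]$, giving strong connectivity, and a nontrivial strong subdigraph contains a directed cycle whose underlying cycle in $G$ has length at least $g(G)$. Both halves are handled cleanly, including the edge case where $S_{SC}(G)\setminus\{1\}$ is empty.
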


\begin{lemma} \label{comcon}
Let $D$ be an oriented graph.   If $C \subseteq V(D)$ is a maximal convex set, then $D - C$ is a connected subdigraph of $D$.
\end{lemma}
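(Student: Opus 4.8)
The plan is to argue by contradiction. We may assume $C \ne V(D)$, since otherwise $D-C$ is the empty digraph, which is trivially connected; thus $C$ is a maximal \emph{proper} convex set. Suppose $D-C$ is disconnected. Then $V(D)\setminus C$ splits into nonempty sets $A$ and $B$ with no arc of $D$ joining a vertex of $A$ to a vertex of $B$ (in either direction). The goal is to show that $C\cup A$ is a convex set of $D$: since $A\ne\varnothing$ it strictly contains $C$, and since $B\ne\varnothing$ it is a proper subset of $V(D)$, contradicting the maximality of $C$.

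To prove that $C\cup A$ is convex, take $u,v\in C\cup A$ and a $uv$-geodesic $P$ (the $vu$-case is identical), and suppose some vertex $w$ of $P$ lies in $B$. Let $Q$ be the maximal subpath of $P$ that contains $w$ and avoids $C$. Since a subpath of a geodesic is again a geodesic, $Q$ is a geodesic; and since $V(Q)\subseteq A\cup B$, $Q$ is connected, $w\in B$, and there is no arc between $A$ and $B$, we conclude $V(Q)\subseteq B$. The endpoints of $Q$ cannot be $u$ or $v$, because $\{u,v\}\subseteq C\cup A$ is disjoint from $B$; hence, by maximality of $Q$, the vertex $c_1$ of $P$ immediately preceding $Q$ and the vertex $c_2$ of $P$ immediately following $Q$ both lie in $C$. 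But then the subpath of $P$ from $c_1$ to $c_2$ is a $c_1c_2$-geodesic with $c_1,c_2\in C$, so the convexity of $C$ forces all of its vertices, in particular those of $Q$, into $C$ --- contradicting $V(Q)\subseteq B$. Therefore $V(P)\cap B=\varnothing$, i.e.\ $V(P)\subseteq C\cup A$, and $C\cup A$ is convex.

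The only delicate point, and the step that deserves care, is the handling of a geodesic that alternates between $C$ and its complement. The remark that every subpath of a geodesic is again a geodesic is what lets us isolate a single maximal excursion of $P$ outside $C$; the separation between $A$ and $B$ then forces that excursion to lie wholly inside $B$, and that is precisely what is incompatible with the convexity of $C$. Everything else in the argument is routine bookkeeping.
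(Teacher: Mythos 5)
Your proof is correct and follows the same strategy as the paper: if $D-C$ were disconnected, adjoining all but one component of $D-C$ to $C$ would yield a larger proper convex set, contradicting maximality. The paper simply asserts that this enlarged set is convex, whereas you supply the verification (isolating a maximal excursion of a geodesic outside $C$, forcing it into one side of the separation, and invoking the convexity of $C$ on the flanking subgeodesic); that argument is sound.
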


\begin{proof}
Otherwise, let $D_1, \dots, D_n$ be the connected components of $D-C$.   Since $C$ is a convex set of $D$, $C \cup \bigcup_{i=1}^{n-1} V(D_i)$ is a convex set of $D$ properly containing $C$, a contradiction.
\end{proof}

The following observation is simple, but also very useful while searching for adequate orientations to realize specific convex numbers.   The proof is straightforward and thus will be omitted.

\begin{observation} \label{ai-ao}
Let $G$ be a triangle-free graph and  let $D$ be an orientation of $G$ with a convex set $C$.   Let $x \in V \setminus C$ be adjacent to a vertex $y \in C$.
\begin{itemize}
	\item If $(x,y) \in A(D)$, then $N(x) \cap C \subseteq N^+ (x)$.
	\item If $(y,x) \in A(D)$, then $N(x) \cap C \subseteq N^- (x)$.
\end{itemize}
\end{observation}

Let $n$ be an integer.   We denote by $P_n$ the path on $n$ vertices, and we will assume without loss of generality that $P_n = (1, \dots, n)$.   The $(n \times m)$-\emph{grid} is the cartesian product $P_n \Box P_m$.   Hence, if $G = P_n \Box P_m$, then $V(G) = \{ (i,j) \colon\ 1 \le i \le n, 1 \le i \le m \}$.   Although it is not standard, and it can be impractical in a different context, for the sake of simplicity we will denote the ordered pair $(i,j)$ as $i_j$.   Also, we will use the canonical embedding of the grid $G = P_n \Box P_m$ in the plane to define orientations of $G$, and directed paths in an orientation $D$ of $G$.   To achieve this goal, we will use directions to ``move'' on the grid, denoted as a sequence of movements using the symbols $u, d, l, r$, which stand for up, down, left and right.   As an example, consider a directed cycle denoted in the usual way, \emph{i.e.}, $(i_j, i_{j+1}, (i+1)_{j+1}, (i+1)_j, i_j)$; with our notation we have the sequence $(u,r,d,l)$, starting at vertex $i_j$.   In the following paragraph, there is another example of the orientations that can be defined in this way.

Let $n, m \ge 2$ be integers and $G$ be the canonical plane embedding of the grid $P_n \Box P_m$.   Let $H^\ast$ be a connected subgraph of the interior dual of $G$ and let $H$ be the subgraph of $G$ induced by the faces in $V(H^\ast)$ (hence, the interior dual of $H$ is $H^\ast$).   We define a \emph{whirlpool} to be an oriented graph obtained from $H$ by the following orientation of its edges.

\begin{figure}
\begin{center}
\includegraphics{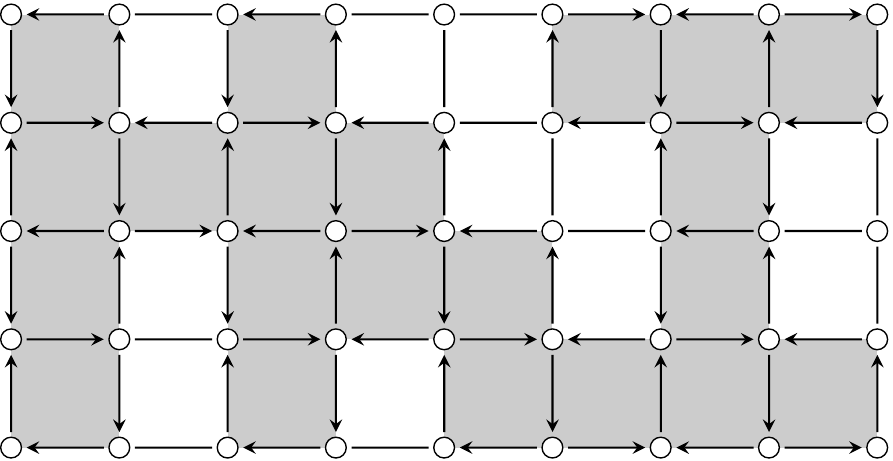}
\caption{A whirlpool orientation of the subgraph induced by the vertices in the gray region.} \label{whirlpool}
\end{center}
\end{figure}

$$\textnormal{Orient } \left\{ \begin{array}{l} i_j i_{j+1} \textnormal{ as:} \left\{ \begin{array}{lc} (i_j,i_{j+1}) & \textnormal{if } i \stackrel{2}{\equiv} j \\ \\ (i_{j+1},i_j) & \textnormal{otherwise.} \end{array}\right.  \\  \\  i_j (i+1)_j \textnormal{ as:} \left\{ \begin{array}{lc} ((i+1)_j,i_j) & \textnormal{if } i \stackrel{2}{\equiv} j \\ \\ (i_j,(i+1)_j) & \textnormal{otherwise.} \end{array} \right. \end{array} \right.$$   An example of a whirlpool is depicted in Figure \ref{whirlpool}, where the graph $H^\ast$ has the gray faces of the grid as its vertex set.   In the rest of the figures, the gray squares will always correspond to directed cycles.   As the following proposition shows, whirlpools have a very important property related to convexity.

\begin{proposition} \label{lcon1grids}
If $D$ is a whirlpool, then $\con (D) = 1$.
\end{proposition}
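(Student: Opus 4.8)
The plan is to show that a whirlpool $D$ admits no proper convex set of size at least $2$, so that by Lemma~\ref{girthcon} (a convex set of size $\ge 2$ must induce a strong subdigraph) it suffices to argue that any strong sub-digraph on $\ge 2$ vertices must be all of $D$. First I would record the basic structure of the orientation: with the parity rule given, every unit square of $H$ is oriented as a directed $4$-cycle, and the orientations of horizontal and vertical edges alternate in a ``pinwheel'' pattern, so that each interior vertex has in-degree and out-degree at least $1$, and consecutive squares sharing an edge induce that edge with opposite orientations relative to the two cycles. The key local fact I want is: if $C$ is convex, $|C|\ge 2$, and $C$ contains two adjacent vertices $x,y$ with, say, $(x,y)\in A(D)$, then the unique short directed path from $y$ back to $x$ forces the whole square containing the edge $xy$ into $C$; more precisely, since every $4$-cycle in the grid is a geodesic in both directions between any of its vertices (the grid is triangle-free and the only cycles through a fixed edge of length $4$ are the bounding squares), convexity drags in the rest of that square.

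The core of the argument is then a propagation step. Having shown that one square $Q\subseteq C$, I would take a square $Q'$ of $H$ adjacent to $Q$ (sharing an edge $e$) and show $Q'\subseteq C$ as well. The two endpoints of $e$ lie in $C$; by the alternating orientation, the edge $e$ points one way as part of $\partial Q$ and the opposite way would be needed to traverse $\partial Q'$, so the directed path closing up $Q'$ between the endpoints of $e$ is again a geodesic and lies in $C$. Iterating over the connected interior-dual graph $H^\ast$ (connectivity of $H^\ast$ is exactly what lets this induction reach every square), we conclude $V(H)\subseteq C$, i.e. $C=V(D)$, which is not proper. Hence $D$ has no proper convex set of size $\ge 2$, and since every singleton is convex, $\con(D)=1$.

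The main obstacle I expect is the case analysis in the base step and in the propagation step — verifying, for each parity class of edges and each relative position of an adjacent square, that the ``return path'' one is forced to include really is a shortest directed path (and not merely \emph{a} directed path), so that convexity genuinely applies. This is where Observation~\ref{ai-ao} is useful: once a vertex $x\notin C$ were adjacent to $y\in C$, it would pin down all arcs between $x$ and $C$ to point the same way, which combined with the pinwheel orientation pattern quickly yields a contradiction unless $x\in C$; so an alternative, perhaps cleaner, organization is to assume for contradiction that $C$ is a proper convex set with $|C|\ge 2$, pick a vertex $x\notin C$ adjacent to $C$ (possible since the underlying graph is connected and $C\ne V(D)$), and derive a contradiction from the forced orientations around $x$ together with the strongness of $D[C]$ from Lemma~\ref{girthcon}. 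I would carry out whichever of these two routes produces the shorter case check; the substance is identical. A final routine point to dispatch is the degenerate shape of $H$ (a single square, or a ``path'' of squares), where the same argument applies verbatim since $H^\ast$ is still connected.
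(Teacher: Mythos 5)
Your proposal is correct and follows essentially the same route as the paper's proof: observe that every unit square of $H$ is a directed $4$-cycle, use triangle-freeness to get $d(v,u)=3$ whenever $u\to v$ so that each square containing an arc of a convex set is absorbed into it, and propagate across the connected interior dual $H^\ast$ to conclude that any convex set with two (adjacent) vertices is all of $V(D)$. The only cosmetic difference is that you invoke Lemma~\ref{girthcon} to reduce to adjacent pairs, where the paper uses strong connectivity to argue directly that the convex hull of any pair is $V(D)$.
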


\begin{proof}
Let $D$ be a whirlpool.   Hence, there exist a pair of integers $n, m \ge 2$ and a subgraph $H$ of the grid $P_n \Box P_m$ such that $D$ is obtained from $H$ by the aforementioned orientation of its edges.   We affirm that every $4$-cycle in $H$ is an oriented $4$-cycle in $D$.   If $C$ is a $4$-cycle in $H$, then $C = (i_j, i_{j+1},(i+1)_{j+1},(i+1)_j,i_j)$ for some $1 \le i \le n-1, 1 \le j \le m-1$.   It is not difficult to observe that if $i \stackrel{2}{\equiv} j$, then $C$ is an oriented cycle in $D$.   Else, $C^{-1}$ (the cycle $C$ in reverse order) is an oriented cycle in $D$.   Hence, $D$ is strongly connected.

Since every edge of $H$ belongs to a $4$-cycle, it follows that every arc of $D$ belongs to a directed $4$-cycle.   Recalling that $D$ is triangle free, it is clear that $u \to v$ implies $d(v,u) = 3$.   Hence, if $u \to v$, then the vertex set of every $4$-cycle containing the arc $(u,v)$ is contained in the convex hull of $\{u,v\}$.   But $D$ is strongly connected, so using the fact that the interior dual of $H$ is connected, it can be shown inductively that for every pair $u,v$ of vertices of $D$, the convex hull of $\{ u, v \}$ is $V(D)$.   Therefore, $\con (D) = 1$.

\end{proof}

\begin{corollary} \label{con=1}
For every grid $G$, $1 \in S_{SG}(G)$.
\end{corollary}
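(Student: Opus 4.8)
The plan is to derive Corollary \ref{con=1} directly from Proposition \ref{lcon1grids} by exhibiting, for an arbitrary grid $G = P_n \Box P_m$ with $n,m \ge 2$, a single whirlpool orientation whose underlying graph is all of $G$. The key observation is that the whirlpool construction takes as input a connected subgraph $H^\ast$ of the interior dual of the canonical plane embedding of $P_n \Box P_m$, and produces a strong orientation of the subgraph $H$ of $G$ induced by the faces in $V(H^\ast)$. So the first step is to choose $H^\ast$ to be the \emph{entire} interior dual of $G$, which is itself a grid $P_{n-1} \Box P_{m-1}$ and hence connected; then $H = G$, and the whirlpool orientation is an orientation of $G$ itself.

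Next I would invoke Proposition \ref{lcon1grids}: since the resulting oriented graph $D$ is a whirlpool, its proof establishes both that $D$ is strongly connected and that $\con(D) = 1$. Therefore $D$ is a strong orientation of $G$ with convexity number $1$, so $1 \in S_{SC}(G)$ by the definition of the strong convexity spectrum. This covers all $n,m \ge 2$ in one stroke. (Note the corollary statement writes $S_{SG}(G)$, which is evidently a typo for $S_{SC}(G)$; I would silently read it as the latter.)

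There is essentially no obstacle here — the corollary is an immediate specialization of the proposition, the only content being the remark that the full interior dual is a legitimate choice of $H^\ast$ because it is connected. I would present the proof in two sentences: take the whirlpool orientation of $G$ induced by the whole interior dual, which is connected since it is isomorphic to $P_{n-1}\Box P_{m-1}$; by Proposition \ref{lcon1grids} this is a strong orientation with convexity number $1$, whence $1 \in S_{SC}(G)$.

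\begin{proof}
Let $G = P_n \Box P_m$ with $n, m \ge 2$, and consider its canonical plane embedding. Its interior dual is isomorphic to the grid $P_{n-1} \Box P_{m-1}$, which is connected (and, if $n = m = 2$, is a single vertex). Choosing $H^\ast$ to be the whole interior dual of $G$, the corresponding subgraph $H$ of $G$ induced by the faces in $V(H^\ast)$ is $G$ itself. Hence the whirlpool orientation $D$ obtained from $H$ is an orientation of $G$. By Proposition \ref{lcon1grids}, $D$ is strongly connected and $\con(D) = 1$. Therefore $D$ is a strong orientation of $G$ with $\con(D) = 1$, so $1 \in S_{SC}(G)$.
\end{proof}
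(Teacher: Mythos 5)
Your proof is correct and is exactly the argument the paper intends: the corollary is stated as an immediate consequence of Proposition \ref{lcon1grids}, obtained by orienting the entire grid as a whirlpool (taking $H^\ast$ to be the full interior dual). Your explicit remark that the interior dual is connected, being isomorphic to $P_{n-1}\Box P_{m-1}$, is the only detail the paper leaves implicit.
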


Observe that if an oriented grid $D$ contains a whirlpool $W$ as a subdigraph, then every convex set containing at least two adjacent vertices of $W$ must contain $V(W)$.   Also, if $W$ is a whirlpool, then the digraph $\overleftarrow{W}$ obtained by the reversal of every arc of $W$ has the same properties as $W$; we will call such a digraph an \emph{anti-whirlpool}.

For a given integer $k$ and an oriented graph $D$, it is easier to prove $k \in S_{SC} (D)$ than proving $k \notin S_{SC} (D)$.   The following result excludes some values from the strong convexity spectra of grids.   Although simple, the complete proof of the lemma is to long to be included here.   The proof of the first item of the lemma is complete, as well as the cases $i=3$ and $i=4$ of the second item.   The proofs for the cases $i=5$ and $i=6$ can be obtained with similar arguments.

\begin{lemma} \label{forbid}
Let $n, m \ge 2$ be integers.   If $G = P_n \Box P_m$, then:
\begin{itemize}
	\item $2,3,5, |V|-1 \notin S_{SC} (G)$.
	\item For every $i \in \{ 3, 4, 5, 6 \}$, if $n,m \ge i$, then $|V|-(i-1) \notin S_{SC} (G)$.
\end{itemize}
\end{lemma}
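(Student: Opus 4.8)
The plan is to prove each forbidden value by assuming a proper convex set $C$ of the stated size exists and deriving a contradiction, using Lemma \ref{girthcon} (which forces $C$ with $|C|\ge 2$ to induce a strong—hence $4$-cycle-rich—subgrid), Lemma \ref{comcon} (which forces $D-C$ to be connected when $C$ is maximal), and Observation \ref{ai-ao} (which controls the orientations of edges leaving $C$). Since grids are triangle-free and bipartite with girth $4$, Lemma \ref{girthcon} already gives $2,3 \notin S_{SC}(G)$ immediately: a convex set of size $2$ or $3$ with at least two vertices cannot induce a strong subdigraph (no strong digraph on $2$ or $3$ vertices is a subgraph of a triangle-free graph), and size $1$ is achievable but distinct. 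For the value $5$: a strong induced subgrid on $5$ vertices would have to be a subgraph of the grid that admits a strong orientation, but the only induced subgrids on $5$ vertices are (pieces of) paths, a ``plus'' shape, or an $L$; none of these contain a cycle, so none can be made strong, giving $5 \notin S_{SC}(G)$. The value $|V|-1$ is handled by Theorem~1 (the Chartrand--Fink--Zhang characterization): $\con(D)=|V|-1$ iff $D$ has a source, sink, or transitive vertex, and a strong orientation has no source and no sink, while a transitive vertex in a triangle-free graph would force two of its neighbors to be adjacent—impossible in a grid's orientation—so $|V|-1\notin S_{SC}(G)$.

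For the second item, the plan is to show that a proper convex set $C$ of size $|V|-(i-1)$ forces $D-C$ to be a connected set of exactly $i-1$ vertices, and then to analyze, case by case on $i$, the possible shapes of such a small connected ``complement'' inside the grid together with the boundary orientation constraints coming from Observation \ref{ai-ao}. First I would take $C$ maximal (any proper convex set extends to a maximal one of size at least as large; if a larger maximal one equals $V$ we adjust the bookkeeping), so by Lemma \ref{comcon} the complement $R = D-C$ is connected with $|R| = i-1$. Then I would enumerate the connected induced subgraphs of the grid on $i-1$ vertices: for $i=3$, $R$ is a single edge or two incident edges; for $i=4$, $R$ is a path $P_3$, a path $P_4$ bent or straight, a claw $K_{1,3}$, or a $2\times 2$ square; for $i=5$ and $i=6$ there are correspondingly more but still finitely many ``small polyomino-like'' shapes. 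For each shape I would look at the vertices of $C$ adjacent to $R$: every such boundary vertex $y\in C$ has all its $R$-neighbors forced by Observation \ref{ai-ao} to be either all in $N^+(y)$ or all in $N^-(y)$ (the case depends on the single arc between a fixed $y$-neighbor in $R$ and $y$), and simultaneously $C$ being convex means every geodesic in $D$ between two vertices of $C$ avoids $R$. The contradiction in each case comes from exhibiting two vertices of $C$ on ``opposite sides'' of the small region $R$ whose unique shortest directed route in any strong orientation must pass through a vertex of $R$, contradicting convexity; the orientation constraints from Observation \ref{ai-ao} are what pin down enough arc directions near $R$ to make this forced.

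The main obstacle will be the case analysis in the second item, specifically handling $i=5$ and $i=6$ where the complement $R$ can have several topologically distinct shapes and can sit against the boundary of the grid in different ways (corner, edge, interior), each altering which geodesics between $C$-vertices are available. The bipartiteness and girth-$4$ structure help because in a grid every $4$-cycle through an arc $(u,v)$ forces $d(v,u)=3$ (as in the proof of Proposition \ref{lcon1grids}), which sharply limits how arcs can be oriented around a small hole without creating a geodesic that dives into $R$. The key recurring sub-argument—and the thing I would isolate as a lemma if writing the full proof—is: if $R$ is a connected region of at most $5$ vertices in a strong oriented grid and $C=V\setminus R$ is convex, then the ``boundary cycle'' around $R$ is consistently oriented, and this consistent orientation forces a geodesic between two $C$-vertices adjacent to $R$ to enter $R$. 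Grinding through the straight-path subcases of $i=3,4$ is routine (and the authors say these are the fully written ones), so in the writeup I would present $i=3,4$ in full and then indicate that $i=5,6$ follow by the same boundary-orientation-plus-forced-geodesic argument applied to the larger, but still bounded, list of complement shapes.
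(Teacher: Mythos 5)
Your proposal follows essentially the same route as the paper's proof: Theorem~\ref{no2} and Lemma~\ref{girthcon} for $2,3,5$, the Chartrand--Fink--Zhang characterization plus triangle-freeness for $|V|-1$, and, for the second item, a maximum convex set whose connected complement of $i-1$ vertices is analyzed shape by shape using Observation~\ref{ai-ao} to force a geodesic through the complement (the paper likewise writes out only $i=3,4$ and asserts $i=5,6$ are similar). The one slip is your enumeration of the connected $5$-vertex induced subgraphs of a grid --- you omit, e.g., the $2\times 2$ square with a pendant vertex, which \emph{does} contain a cycle --- but the paper's simpler observation that every connected $5$-vertex subgraph has a vertex of degree $1$, and hence admits no strong orientation, covers all cases including that one.
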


\begin{proof}
By Theorem \ref{no2} we have $2 \notin S_{SC} (G)$.   Observing that every connected subdigraph of $G$ with $3$ or $5$ vertices has at least one vertex of degree $1$, and thus does not admit a strong orientation, it follows from Lemma \ref{girthcon} that $3,5 \notin S_{SC} (G)$.    Any strong orientation of $G$ has neither sinks nor sources.   Also, $g(G)=4$ and hence the orientations of $G$ cannot have transitive vertices.   It follows that $|V|-1 \notin S_{SC} (G)$.

Let $n,m$ be integers such that $n,m \ge 3$ and suppose that a strong orientation $D$ of $G$ has a convex set $C$ of cardinality $|V|-2$.   Let $S=\{ x, y \}$ be the set $V \setminus C$ and assume without loss of generality that $(x,y) \in A(D)$.   Since $D$ is strong, $d^-(x) \ge 1$ and $d^+(y) \ge 1$.   It follows from Observation \ref{ai-ao} that $N^-(x) \cap C = N^-(x)$ and $N^+(y) \cap C = N^+(y)$.   Let us denote by $x^u, x^d, x^l, x^r$ the vertices above, below, to the left and to the right of $x$, respectively, in G.   Since $n,m \ge 3$, we can assume without loss of generality that either $(x^u, x, y, y^r)$ or $(x^l, x, y, y^u)$ is a directed path in $D$, and hence a geodesic.   But this contradicts that $C$ is a convex set.   Hence, a convex set of cardinality $|V|-2$ cannot exist.

Let $n,m$ be integers such that $n,m \ge 4$ and suppose that a strong orientation $D$ of $G$ has a convex set $C$ of cardinality $|V|-3$.   Let $S=\{ x, y, z \}$ be the set $V \setminus C$.   Let us assume without loss of generality that $(x,y) \in A(D)$ and $x^r = y$.   We have two cases.

First consider $y^r = z$.   Again, we have two cases.   Our first subcase is $(z,y) \in A(D)$.   As in the previous argument, either $(x^l, x, y, y^u)$ or $(z^r, z, y, y^u)$ is a geodesic in $D$, contradicting that $C$ is a convex set.   Our second subcase is $(y,z) \in A(D)$.   We will assume without loss of generality that $(y^u, y) \in A(D)$.   Hence, either $(x^l, x, y, z, z^u)$ or $(y^u, y, z, z^r)$ is a geodesic in $D$, contradicting that $C$ is a convex set.

As a second case, consider $y^d = z$, with two subcases.   Our first subcase is $(z,y) \in A(D)$.   Either $(x^d, x, y, y^r)$ or $(z^l, z, y, y^u)$ is a geodesic in $D$, a contradiction.   The second subcase is $(y,z) \in A(D)$.   Hence, at least one of the following directed paths is present in $D$, and it is a geodesic: $(x^d, x, y, y^u), (x^d, x, y, y^r), (y^u, y, z, z^l), (y^r, y, z, z^l)$.   But every case results in a contradiction.   Hence, a convex set of cardnality $|V|-3$ cannot exist.

\end{proof}

\section{Convex spectra of small grids} \label{SCSSG}

The following pair of results deal with the convexity spectra of $n \times 2$ grids.

\begin{lemma} \label{2*n-part1}
Let $n \ge 2$ be an integer and let $G$ be the grid $P_n \Box P_2$.   If $j$ is an integer such that $j \ne 1$ and $\left\lfloor \frac{n}{2} \right\rfloor \le j \le n-1$, then $2j \in S_{SC} (G)$.
\end{lemma}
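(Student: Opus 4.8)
**Plan for proving Lemma~\ref{2*n-part1}.**

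The plan is to construct, for each admissible $j$, an explicit strong orientation $D$ of $G = P_n \Box P_2$ together with an explicit convex set $C$ of size $2j$, and then argue both that $D$ is strong and that $C$ is convex with no strictly larger proper convex set being forced. The natural building block is the whirlpool: recall from Proposition~\ref{lcon1grids} and the remark following it that if a whirlpool $W$ sits inside an oriented grid then every convex set meeting $W$ in two adjacent vertices swallows all of $V(W)$. So the idea is to split the $n\times 2$ grid along the ``long'' direction into two consecutive blocks: a block $B_1$ consisting of the first $j$ columns (a $j\times 2$ subgrid, $2j$ vertices) which I want to be convex, and a complementary block $B_2$ on the remaining $n-j$ columns which I want to be ``absorbing'' in the sense that any convex set touching two adjacent vertices of $B_2$ must be all of $V(D)$. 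On $B_2$ I would put a whirlpool (this is exactly where its $\con = 1$ behaviour is used); on $B_1$ I would put an orientation making it into a strong subdigraph that is itself convex in $D$, for instance a directed cycle around the boundary of the $j\times 2$ block, or a whirlpool on $B_1$ as well if $j$ is such that the parity works out — either way $B_1$ must be strong by Lemma~\ref{girthcon}. The two blocks share one column of two vertices (or are joined by the two edges of a shared column); the orientation on that interface must be chosen so that $B_1$ stays convex, i.e.\ so that no $B_1$-to-$B_1$ geodesic is forced to leave $B_1$ through the interface, while still keeping the whole digraph strong.

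First I would fix notation for the columns, writing the vertices of the $k$-th column as $1_k, 2_k$ (using the grid's $i_j$ convention), and write down the orientation explicitly block by block, with a separate small case analysis for the interface edges. Second, I would verify $D$ is strong: inside $B_1$ and inside $B_2$ strong connectivity is immediate from the chosen orientations (a spanning directed cycle, respectively a whirlpool, which is strong by Proposition~\ref{lcon1grids}), and then one checks that the interface arcs provide a directed path from $B_2$ into $B_1$ and back, which is a finite bounded check. Third, I would verify that $C = V(B_1)$ is convex: take $u,v \in C$ and show every $uv$-geodesic stays in $C$; since $C$ is $4$-regular-ish and small-diameter this reduces to checking that no shortest $uv$-path is forced to detour through $B_2$, which follows because any such detour would have to enter and leave $B_2$ at the single interface column and would be strictly longer than the path staying inside $B_1$. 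Here Observation~\ref{ai-ao} is the right tool to control which interface arcs are allowed. Finally, $|C| = 2j$, and $C$ is proper since $j \le n-1$, giving $2j \in S_{SC}(G)$.

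The main obstacle, I expect, is the parity bookkeeping at the interface and the lower bound $\lfloor n/2\rfloor \le j$. The whirlpool orientation on $B_2$ is defined by the residue of $i$ mod $2$ against $j$ mod $2$, so whether $B_2$ genuinely behaves as an absorbing whirlpool — in particular whether \emph{every} $4$-cycle of $B_2$ becomes a directed $4$-cycle, and whether the boundary arcs of $B_2$ point the way the proof of Proposition~\ref{lcon1grids} needs — depends on the parity of $j$ and of $n-j$; if a naive choice fails, I would compensate by shifting the whirlpool's ``center'' or by reversing it to an anti-whirlpool, which has the same absorbing property. The constraint $j \ge \lfloor n/2\rfloor$ is presumably exactly what guarantees that the absorbing block $B_2$ (on $n-j \le \lceil n/2 \rceil$ columns) is short enough that a whirlpool/anti-whirlpool of the correct parity fits and still makes $D$ strong while leaving $B_1$ convex; pinning down why smaller $j$ cannot be handled by this construction — equivalently, why the block one wants convex must be at least half the grid — is the delicate point, and I would isolate it as the one place where a careful parity/fitting argument, rather than a routine check, is required. (The excluded value $j=1$, i.e.\ $2j=2$, is of course already forbidden by Lemma~\ref{forbid}, so there is nothing to prove there and the hypothesis is consistent.)
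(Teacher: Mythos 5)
Your construction for the generic case ($j \ge n/2$) is essentially the paper's: whirlpool on the first $j$ columns, whirlpool/anti-whirlpool on the rest, interface arcs chosen so that the unique exit arc and unique entry arc of $B_1$ force any detour through $B_2$ to be longer than the direct arc $(j_1,j_2)$. But there are two genuine gaps. First, to conclude $2j \in S_{SC}(G)$ you must show $\con(D)=2j$ \emph{exactly}, and your verification steps only establish that $C=V(B_1)$ is a proper convex set of size $2j$; you never rule out a larger proper convex set. In this construction $V(B_2)$ is \emph{also} convex (a path between two $B_2$-vertices through $B_1$ must contain a $j_2$-to-$j_1$ subpath of length $3$ inside the whirlpool $B_1$, making the detour longer than the route inside $B_2$), so you cannot make $B_2$ ``absorbing'' in the sense you describe; what saves the day when $j\ge n/2$ is simply that $|V(B_2)|=2(n-j)\le 2j$, together with the fact that any convex set meeting both whirlpools in two adjacent vertices swallows everything. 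This counting step is missing from your plan.

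Second, and more seriously, you have misidentified the role of the hypothesis $\lfloor n/2\rfloor\le j$. It has nothing to do with parity or with whether a whirlpool ``fits'' on $B_2$ (a whirlpool fits on any $k\times 2$ block with $k\ge 2$). The point is that when $n$ is odd and $j=\lfloor n/2\rfloor$, your complementary block $B_2$ has $n-j=j+1$ columns, hence $2j+2>2j$ vertices, and by the observation above $V(B_2)$ would be a proper convex set strictly larger than $C$ — so your two-block construction yields $\con(D)=2(n-j)$, not $2j$. The paper handles exactly this case with a different decomposition: two whirlpools of $j$ columns each, separated by a single buffer column $\{(j+1)_1,(j+1)_2\}$ oriented so that $N^+((j+1)_1)$ and $N^-((j+1)_2)$ are singletons and so that adjoining the buffer to either side forces geodesics into both whirlpools, collapsing any larger convex set to $V(D)$. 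Your proposal has no mechanism for this boundary case, and the ``delicate point'' you flag as requiring a careful parity argument is in fact a place where the construction itself must change.
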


\begin{proof}
Let $D_1$ be the orientation of $G_1 = P_j \Box P_2$ as a whirlpool.   We will consider two cases.

First, suppose that $j \ge \frac{n}{2}$.   Let $D_2$ be the orientation of $G_2 = G-G_1$ as a whirlpool, if $j$ is odd, or as anti-whirlpool if $j$ is even.   In either case, the orientation of the edges $j_1 j_2$ and $(j+1)_1 (j+1)_2$ result in parallel arcs, \emph{i.e.}, we have either the arcs $(j_1,j_2)$ and $((j+1)_1, (j+1)_2)$ or the arcs $(j_2,j_1)$ and $((j+1)_2,(j+1)_1)$.   We will assume that $j$ is odd, the remaining case can be dealt similarly.   If $D$ is the digraph obtained by orienting the two remaining edges as $(j_1,(j+1)_1)$ and $((j+1)_2,j_2)$, then it is clear that $\partial^+(V(D_1)) = \{ (j_1,(j+1)_1) \}$ and $\partial^-(V(D_1)) = \{ ((j+1)_2,j_2) \}$.   But $d_D(j_1,j_2)=1$, and hence, $V(D_1)$ is a convex set of cardinality $2j$.   If $C$ is a convex set of $P_n \Box P_2$ such that $|C| > 2j$, then $C \cap V(D_1) \ne \varnothing \ne C \cap V(D_2)$.    From the previous observations about $\partial^+ (V(D_1))$ and $\partial^- (V(D_1))$, we conclude that $j_1, j_2, (j+1)_1, (j+1)_2 \in C$.   Since $D_1$ and $D_2$ are whirlpools, we obtain $C=V(P_n \Box P_2)$.   Therefore, $V(D_1)$ is a maximum convex set of $D$.

If $j = \left\lfloor \frac{n}{2} \right\rfloor$, then we will assume that $n$ is odd, otherwise we are back in the previous case.   Let $D_2$ be the orientation of $G_2 = G- G[V(D) \setminus V(G_1) \cup \{ (j+1)_1, (j+1)_2 \}]$ as a whirlpool, if $j$ is odd, or as anti-whirlpool if $j$ is even.   Again, we will assume that $j$ is odd.   Orient the remaining edges in the following way:   $((j+1)_1, (j+1)_2), ((j+1)_2, j_2), ((j+1)_2, (j+2)_2), (j_1, (j+1)_1)$ and $ ((j+2)_1, (j+1)_1)$.   An argument similar to the one used in the previous case shows that $V(G_1)$ and $V(G_2)$ are convex sets of $D$, and clearly $|V(G_1)| = |V(G_2)|$.   If $C$ is a convex set of $P_n \Box P_2$ such that $|C| > 2j$, then $C \cap \{ (j+1)_1, (j+1)_2 \} \ne \varnothing$.   Since $N^+((j+1)_1) = \{ (j+1)_2 \}$ and $N^-((j+1)_2) = \{ (j+1)_1 \}$, then $\{ (j+1)_1, (j+1)_2 \} \subseteq C$.   But $((j+1)_2, j_2, (j-1)_2, (j-1)_1, j_1, (j+1)_1)$ and $((j+1)_2, (j+2)_2, (j+3)_2, (j+3)_1, (j+2)_1, (j+1)_1)$ are $(j+1)_2 (j+1)_1$-geodesics in $D$.   Since $G_1$ and $G_2$ are whirlpools in $D$, we obtain $C = V(D)$.   Hence, $V(G_1)$ is a maximum convex set of $D$.

\end{proof}

\begin{theorem}
If $n \ge 2$ is an integer and $G$ is the grid $P_n \Box P_2$, then $$S_{SC} (G) = \{ 1 \} \cup \{ 2j \colon\ \left\lfloor \tfrac{n}{2} \right\rfloor \le j \le n-1\} \setminus \{ 2 \}.$$
\end{theorem}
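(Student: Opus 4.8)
The plan is to prove the two inclusions of the claimed identity separately, deferring the hard part (a sharp lower bound on the number of columns of a convex set) to the end.

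For the inclusion ``$\supseteq$'', Corollary~\ref{con=1} yields $1\in S_{SC}(G)$, and Lemma~\ref{2*n-part1} yields $2j\in S_{SC}(G)$ for every integer $j$ with $j\neq 1$ and $\lfloor n/2\rfloor\le j\le n-1$. Since the value $2=2\cdot 1$ is realized only at $j=1$, the set $\{2j:\lfloor n/2\rfloor\le j\le n-1\}\setminus\{2\}$ coincides with $\{2j : j\neq 1,\ \lfloor n/2\rfloor\le j\le n-1\}$ (this $\setminus\{2\}$ only has an effect when $n\in\{2,3\}$, where $\lfloor n/2\rfloor=1$), so $\{1\}\cup\{2j:\lfloor n/2\rfloor\le j\le n-1\}\setminus\{2\}\subseteq S_{SC}(G)$.

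For ``$\subseteq$'', fix a strong orientation $D$ of $G$ and a maximum proper convex set $C$, so $\con(D)=|C|$. If $|C|=1$ we are done, so assume $|C|\ge 2$. The first step is a structural description of $C$. By Lemma~\ref{girthcon}, $D[C]$ is strong, hence the underlying graph $G[C]$ is connected and has no bridge (a bridge oriented in either direction would leave one side unreachable from the other). I will use the following fact about the ladder $P_n\Box P_2$: \emph{every connected, bridgeless induced subgraph with at least two vertices consists of a block of consecutive full columns $\{i_1,i_2:a\le i\le b\}$ with $b\ge a+1$}. Indeed, the set of columns met by such a subgraph is an interval (by connectedness, since on each row the vertices form a path), and every cycle of $P_n\Box P_2$ uses only full columns (a cycle entering a column on one row must also use that column on the other row, the two rows being paths); hence, if some column of $[a,b]$ were not full, the corresponding horizontal edge would lie in no cycle of $G[C]$, contradicting bridgelessness; and $a=b$ is impossible because a single edge has a bridge. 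Consequently $|C|=2j$ with $j:=b-a+1\ge 2$, and since $C$ is proper, $[a,b]\neq[1,n]$, so $j\le n-1$. Moreover, $C$ is maximal (being maximum), so by Lemma~\ref{comcon} the digraph $D-C$ is connected; as $C$ separates the columns on its left from those on its right (there is no edge between columns $\le a-1$ and columns $\ge b+1$), one of those two sides is empty, i.e.\ $a=1$ or $b=n$. Using the left--right symmetry of $G$, I may assume $a=1$, so $C$ is the block of columns $1,\dots,b$ and $R:=D-C$ is the block of columns $b+1,\dots,n$, with $r:=n-b\ge 1$.

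It remains to prove $j=b\ge\lfloor n/2\rfloor$, equivalently $r\le\lceil n/2\rceil$, and this is the technical heart of the argument. Suppose, for contradiction, that $b\le\lfloor n/2\rfloor-1$, so $r=n-b\ge\lceil n/2\rceil+1>b$. Since $D$ is strong and the cut separating column $b$ from column $b+1$ consists exactly of the two edges $b_1(b+1)_1$ and $b_2(b+1)_2$, precisely one of these is oriented out of $C$ and one into $C$; relabelling the two rows, assume $(b_1,(b+1)_1)\in A(D)$ and $((b+1)_2,b_2)\in A(D)$, so these are the unique arcs leaving, respectively entering, $C$. Every vertex of $R$ then reaches $(b+1)_2$ and is reached from $(b+1)_1$ by directed paths staying inside $R$, and one checks (using the ladder structure of $R$) that this already forces $D[R]$ to be a strong orientation of $P_r\Box P_2$. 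Applying Observation~\ref{ai-ao} at the two interface vertices constrains the arcs incident with $(b+1)_1$ and $(b+1)_2$, and I would then split into cases according to the orientations of the rungs at columns $b$ and $b+1$. In each case the goal is either to exhibit a proper convex set of $D$ with strictly more than $b$ columns --- obtained by enlarging $C$ to columns $1,\dots,b+1$, or by transporting into $D$ a large convex block of the strong ladder $D[R]$ (such a block remains convex in $D$ once the appropriate distance comparison across the single crossing arc in each direction holds) --- contradicting the maximality of $C$; or else, when no such enlargement exists, the forced rung orientations produce a $b_1b_2$-geodesic of $D$ through $(b+1)_1$ and $(b+1)_2$, contradicting the convexity of $C$. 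Combining this with the structural description gives $|C|=2j$ with $\max(2,\lfloor n/2\rfloor)\le j\le n-1$, which is the set $\{2j:\lfloor n/2\rfloor\le j\le n-1\}\setminus\{2\}$, completing ``$\subseteq$'' and hence the theorem.

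I expect the main obstacle to be exactly this last step, and specifically getting the threshold to be \emph{precisely} $\lfloor n/2\rfloor$ rather than some weaker linear bound. The subtlety is that a small terminal block such as $\{1,\dots,b\}$ can genuinely be convex in a strong orientation of $P_n\Box P_2$; what must fail for $b<\lfloor n/2\rfloor$ is not convexity but \emph{maximality}, so the argument has to locate a larger convex block hidden inside the possibly much longer sub-ladder $R$. Controlling that seems to require exploiting the whirlpool-type rigidity of strong orientations of narrow ladders, plausibly via an induction on $n$ combined with the structural description established above and the interface analysis.
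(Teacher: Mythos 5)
Your overall architecture matches the paper's: the structural reduction (via Lemma~\ref{girthcon} and Lemma~\ref{comcon}) showing that a maximum convex set of size at least $2$ must be a terminal block of $j$ full columns with $2\le j\le n-1$, and the realization of all claimed values via Corollary~\ref{con=1} and Lemma~\ref{2*n-part1}, are both correct and are exactly what the paper does. However, there is a genuine gap at the step you yourself flag as the ``technical heart'': the proof that $j\ge\lfloor n/2\rfloor$. You write ``I would then split into cases \dots in each case the goal is either to exhibit a proper convex set \dots or else \dots produce a $b_1b_2$-geodesic,'' but you never carry out this case analysis, and you end by saying you expect it to require an induction on $n$ exploiting ``whirlpool-type rigidity'' of the sub-ladder $R$. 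As written, the lower bound on $j$ is a plan, not a proof, so the inclusion ``$\subseteq$'' is not established.

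The missing step is in fact resolved by a short, purely local analysis at columns $j+1$ and $j+2$; no induction and no global structure of $D[R]$ is needed. With $(j_1,j_2)\in A(D)$ fixed, one checks the orientation of the rung at column $j+1$: if it points ``down'' (i.e.\ $((j+1)_2,(j+1)_1)\in A(D)$), strongness forces the two crossing arcs, and either a $j_2j_1$-geodesic passes through column $j+1$ (contradicting convexity of $C$) or $V(D)\setminus C$ is itself convex, whence $2(n-j)\le 2j$. If the rung points ``up,'' then either $V(D)\setminus C$ is convex, or $V(P_{j+1}\Box P_2)$ is convex (contradicting maximality of $C$), or one inspects the rung at column $j+2$: one orientation again makes $V(D)\setminus C$ convex, and the other makes $V(D)\setminus\bigl(C\cup\{(j+1)_1,(j+1)_2\}\bigr)$ convex, giving $2(n-j-1)\le 2j$ and hence $j\ge\lceil (n-1)/2\rceil=\lfloor n/2\rfloor$. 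This is precisely where the floor (rather than $n/2$ exactly) enters, and it explains why the ``larger convex block hidden inside $R$'' that worries you is always either all of $R$ or $R$ minus its first column --- nothing deeper is required. You should supply this case analysis explicitly to close the argument.
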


\begin{proof}
Let $D$ be a strong orientation of $G$ and $C$ a maximum convex set of $D$.   It follows from Lemma \ref{comcon} and the fact that $D[C]$ is strong that, for some $2 \le j \le n-1$, either $V(C) = V(P_j \Box P_2)$, or $V(C) = V(D) \setminus V(P_j \Box P_2)$.   Hence, there are not odd integers greater than $1$ in $S_{SC} (G)$.

Assume without loss of generality that $C = V(P_j \Box P_2)$ for some $2 \le j \le n-1$.   Also assume without loss of generality that $(j_1, j_2) \in A(D)$.   We will consider two cases.

Consider for the first case $((j+1)_2, (j+1)_1) \in A(D)$.   Since $D$ is strong, either $((j+1)_1, j_1), (j_2, (j+1)_2) \in A(D)$ or $(j_1, (j+1)_1), ((j+1)_2, j_2) \in A(D)$.   In the former case $(j_2, (j+1)_2, (j+1)_1, j_1)$ is a $j_2 j_1$-geodesic in $D$, a contradiction.   In the latter case, it is direct to verify that $V(D) \setminus C$ is also a convex set; since $C$ is maximum, we get $j \ge \frac{n}{2}$.

For the second case consider $((j+1)_1, (j+1)_2) \in A(D)$.   Again, since $D$ is strong, either $((j+1)_1, j_1), (j_2, (j+1)_2) \in A(D)$ or $(j_1, (j+1)_1), ((j+1)_2, j_2) \in A(D)$.   In the former case it is easy to check that $V(D) \setminus C$ is a convex set of $D$, hence, $j \ge \frac{n}{2}$.   In the latter case, if $((j+2)_2, (j+1)_2), ((j+1)_1, (j+2)_1) \in A(D)$, then $V(P_{j+1} \Box P_2)$ is a convex set of $D$, a contradiction.   Hence $((j+1)_2, (j+2)_2), ((j+2)_1, (j+1)_1) \in A(D)$ and we have two further cases.

If $((j+2)_2, (j+2)_1) \in A(D)$, then $V(D) \setminus C$ is a convex set of $D$, thus $j \ge \frac{n}{2}$.

If $((j+2)_1, (j+2)_2) \in A(D)$, then $V(D) \setminus (C \cup \{ (j+1)_1, (j+1)_2 \})$ is a convex set of $D$, and hence $j \ge \left\lfloor \frac{n}{2} \right\rfloor$.

\end{proof}

Although a bit more complex, similar arguments can be used for the $n \times 3$ grids.

\begin{lemma} \label{3*n-c=3k}
Let $n \ge 2$ be an integer and let $G$ be the grid $P_n \Box P_3$.   If $j$ is an integer such that $2 \le j \le n-1$, then $3j \in S_{SC} (G)$.
\end{lemma}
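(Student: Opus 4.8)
**Plan for proving Lemma \ref{3*n-c=3k}.**

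The goal is, for each $j$ with $2 \le j \le n-1$, to exhibit a strong orientation $D$ of $P_n \Box P_3$ having a maximum proper convex set of size exactly $3j$. The natural strategy is the one used for the $n\times 2$ case: split the grid into two sub-grids along a vertical cut, orient each piece so that it becomes a whirlpool (or anti-whirlpool), and then glue the two pieces with the six connecting edges (the three horizontal edges joining column $j$ to column $j{+}1$, in fibers $P_n^1, P_n^2, P_n^3$) oriented so that the ``left'' block $V(G_1) = V(P_j \Box P_3)$ is forced to be convex while any strictly larger convex set is forced to swallow the whole cut and hence, by Proposition \ref{lcon1grids}, all of $D$.

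Concretely, I would first handle the case $j \ge \lceil n/2 \rceil$ (so that $G_2 = G - G_1 = P_{n-j}\Box P_3$ has at least $2$ columns and can itself carry a whirlpool/anti-whirlpool). Let $D_1$ be the whirlpool orientation of $G_1$ and let $D_2$ be the whirlpool orientation of $G_2$ if the parities match, or the anti-whirlpool otherwise; the point of choosing the parity is exactly as in Lemma \ref{2*n-part1}: it guarantees that the three cut edges $j_1(j{+}1)_1$, $j_2(j{+}1)_2$, $j_3(j{+}1)_3$ receive a ``consistent'' pattern, which I then orient by hand (say all three pointing right, or alternating) so that $\partial^+(V(D_1))$ and $\partial^-(V(D_1))$ each lie in a single cut edge — this is what makes $V(D_1)$ convex, since within $D_1$ every pair of vertices already has both its geodesics inside $V(D_1)$ by Proposition \ref{lcon1grids}, and no geodesic can enter and then leave $V(D_1)$ through only one boundary arc each way. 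Then I verify that any convex set $C$ meeting both $V(D_1)$ and $V(D_2)$ must, by Observation \ref{ai-ao} applied at the boundary arcs, contain two adjacent vertices of the whirlpool $D_2$ and hence all of $V(D_2)$, and likewise all of $V(D_1)$, so $C = V(D)$; thus $V(D_1)$, of size $3j$, is a maximum proper convex set. For the small-$j$ corner cases ($j = 2$ while $n$ is such that $n - j < 2$, and the boundary case $j = \lfloor n/2\rfloor$ when $n$ is odd), I would borrow the device from Lemma \ref{2*n-part1}: let the ``right'' whirlpool $D_2$ omit the single transitional column $j{+}1$, orient the $9$ or so leftover edges around that column explicitly, and check directly that both $V(G_1)$ and $V(G_2)$ are convex of equal size while column $j{+}1$ forces convex supersets up to all of $D$.

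The main obstacle I expect is the bookkeeping at the cut for the $3$-wide grid: with three horizontal cut-edges rather than two, there are more orientation patterns of the connecting edges to rule out, and one must be careful that a geodesic between a vertex of $D_1$ and a vertex of $D_2$ cannot re-enter $V(D_1)$ — equivalently, that $d_D$ between the two endpoints of the (say) unique out-arc of $\partial^+(V(D_1))$ is $1$, not $3$, so that this arc is not itself forced into the convex hull of a separated pair. This is precisely the ``$d_D(j_1,j_2)=1$'' check in Lemma \ref{2*n-part1}, and the same trick — choosing the hand-oriented cut edges so that the two boundary arcs are ``antiparallel'' across a short path already present inside one block — should work, but it requires checking a handful of sub-configurations determined by the parity of $j$ and by which of the three fibers the boundary arcs fall in. Once that local analysis is pinned down, the rest is immediate from Proposition \ref{lcon1grids} and Observation \ref{ai-ao}, exactly as in the $n\times 2$ case.
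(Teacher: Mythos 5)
Your plan has a genuine gap: it only addresses roughly half of the required range of $j$. The lemma demands $3j \in S_{SC}(G)$ for \emph{every} $2 \le j \le n-1$, including values $j < n/2$. Your construction --- whirlpool on $G_1 = P_j \Box P_3$, whirlpool/anti-whirlpool on $G_2 = G - G_1$, cut arcs chosen so that \emph{both} $V(G_1)$ and $V(G_2)$ are convex --- necessarily yields $\con(D) \ge \max\{3j,\, 3(n-j)\}$, which equals $3(n-j) > 3j$ whenever $j < n/2$. The ``corner cases'' you list ($j=2$ with $n-j<2$, and $j=\lfloor n/2\rfloor$ for odd $n$) do not include, say, $j=2$ with $n=100$; and the equal-halves device from Lemma \ref{2*n-part1} cannot reach those values either, since it also produces two convex blocks of which the larger determines $\con(D)$. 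To realize $3j$ for small $j$ you must arrange that the left block of size $3j$ is convex while the much larger right block is \emph{not}, and your proposal contains no mechanism for destroying the convexity of the larger whirlpool. This is precisely where the paper departs from the $P_n \Box P_2$ template: for $2 \le j \le n-2$ it does not orient $P_j \Box P_3$ as a whirlpool at all, but as a whirlpool on the first $j-2$ fibers together with a directed $6$-cycle collar on fibers $j-1$ and $j$ (plus the chord $(j_2,(j-1)_2)$), and it uses an asymmetric cut with $\partial^+(D_3)$ consisting of two arcs and $\partial^-(D_3)$ of one. The distance checks $d(j_2,j_1)=3$, $d(j_3,j_1)=4$ versus detours of length $\ge 5$ make $V(D_3)$ convex, while any convex set meeting $V(D_2)$ in two vertices is propagated back through fibers $j$, $j-1$ into $D_1$ and forced to be all of $V(D)$ --- so the large right block is never convex.

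Two smaller points: your parenthetical claim that $j \ge \lceil n/2\rceil$ guarantees $G_2$ has at least two columns is false for $j = n-1$, where $G_2$ is a single $3$-vertex fiber and admits no whirlpool; the paper handles $j=n-1$ as a separate case with an explicitly oriented last fiber. And even on the range $j \ge n/2$ where your two-whirlpool gluing is viable in principle, the three cut edges must be placed with the out-arc and in-arc in fibers $1$ and $3$ (not in adjacent fibers sharing a length-$3$ internal geodesic), a verification you correctly flag but do not carry out.
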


\begin{figure}
\begin{center}
\includegraphics[width=\textwidth]{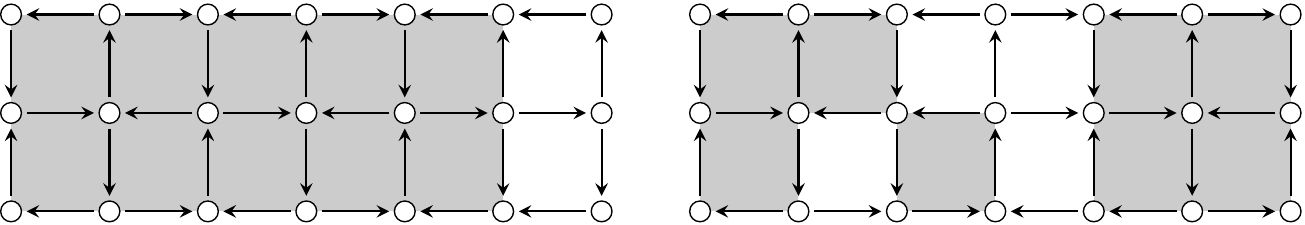}
\caption{The orientations used in the proof of Lemma \ref{3*n-c=3k} with $n=7$ and $j \in \{ 4, 6 \}$.} \label{3*n-c=3kFig}
\end{center}
\end{figure}

\begin{proof}
We consider two cases.   If $j=n-1$, then let $D_1$ be the orientation of $G_1 = P_{n-1}  \Box P_3$ as a whirlpool.   Suppose that $n$ is even, the remaining case is analogous.   Orient the remaining edges as $((n-1)_3, n_3), ((n-1)_1, n_1), (n_2, (n-1)_2), (n_1, n_2), (n_3, n_2)$.   It is straightforward to verify that this orientation $D$ of $G$ is strong and $V(D_1)$ is a maximum convex set of $D$.

For $2 \le j \le n-2$, let $D_1$ be the orientation of $G_1 = P_{j-2} \Box P_3$ as a whirlpool (note that $D_1$ is empty for $j=2$).   Let $D_2$ be the orientation of $G_2 = G - (P_j \Box P_3)$ as a whirlpool.   Orient $(j_3, (j-1)_3, (j-1)_2, (j-1)_1, j_1, j_2, j_3)$ as a directed cycle.   Also, orient $\partial (G_1)$ as $((j-2)_3, (j-1)_3), ((j-1)_2, (j-2)_2), ((j-2)_1, (j-1)_1)$ if $j$ is even, and reverse each of these arcs if $j$ is odd.   Finally, orient the remaining arcs as $(j_2, (j-1)_2), (j_3, (j+1)_3), (j_2, (j+1)_2), ((j+1)_1, j_1)$.   Let $D$ be the resulting orientation of $G$, and let $D_3$ be the induced subdigraph $D[ V(P_j \Box P_3)]$ of $D$.

We will assume that $j$ is even, the remaining case can be dealt similarly.   Observe that $\partial^+(D_3) = \partial^-(D_2) = \{ (j_3, (j+1)_3), (j_2, (j+1)_2) \}$, and $\partial^-(D_3) = \partial^+(D_2) = \{ ((j+1)_1, j_1) \}$.   Since $d((j+1)_2, (j+1)_1)=3$, $d((j+1)_3, (j+1)_1)=4$, $d(j_3, j_1)=4$, and $d(j_2, j_1)=3$, it is clear that $V(D_3)$ is a convex set of $D$ with $|V(D_3)|=3j$.

Let $C$ be a convex set of $D$ such that $|C| > 3j$.   Since every convex set in a strong digraph induces a strong subdigraph, we observe that $|C \cap V(D_2)| \ge 2$.   But $D_2$ is a whirlpool, and hence $V(D_2) \subseteq C$.   Note that $d((j+1)_1, (j+1)_3) = 4$ and also that $((j+1)_1, j_1, j_2, j_3, (j+1)_3)$ is a directed path in $D$.   Thus, $j_1, j_2, j_3 \in C$.   Recall that $d(j_3, j_1)=4$ and consider the directed path $(j_3, (j-1)_3, (j-1)_2, (j-1)_1, j_1)$ to conclude $(j-1)_1, (j-1)_2, (j-1)_3 \in C$.   Since $j$ is even, $((j-1)_3, (j-1)_2, (j-2)_2, (j-2)_3, (j-1)_3)$ is a directed cycle in $D$.   This implies $|C \cap V(D_1)| \ge 2$, but $D_1$ is a whirlpool, and hence $V(D_1) \subseteq C$ and $C=V(D)$.   Therefore $\con (D) = 3j$.

\end{proof}

\begin{lemma} \label{3*n-c=3k+2}
Let $n \ge 2$ be an integer and let $G$ be the grid $P_n \Box P_3$.   If $j$ is an integer such that $2 \le j \le n-2$, then $3j+2 \in S_{SC} (G)$.
\end{lemma}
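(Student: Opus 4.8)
**Proof proposal for Lemma \ref{3*n-c=3k+2}.**

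The plan is to construct, for each $j$ with $2 \le j \le n-2$, a strong orientation $D$ of $G = P_n \Box P_3$ having a maximum convex set of cardinality $3j+2$. Following the template established in Lemma \ref{3*n-c=3k}, the natural candidate convex set is $V(P_j \Box P_3)$ together with two extra vertices from the next column, namely $(j+1)_1$ and $(j+1)_2$ (or some symmetric choice); this gives exactly $3j + 2$ vertices. So first I would fix the region $R = V(P_j \Box P_3) \cup \{(j+1)_1, (j+1)_2\}$ and design the orientation on it and on its complement so that $R$ is convex while nothing larger is.

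The key steps, in order. First, orient $G_1 = P_{j-2} \Box P_3$ as a whirlpool (empty when $j=2$), and orient the complementary region to the right of column $j+1$ as a whirlpool as well, so that both pieces have the strong, ``swallowing'' property from Proposition \ref{lcon1grids}: any convex set meeting two adjacent vertices of a whirlpool contains all of it. Second, I would carefully orient the edges inside columns $j-1$, $j$, and $j+1$, and the edges joining column $j+1$ to column $j+2$, so that (a) the subdigraph induced by $R$ is strong, (b) the boundary arcs $\partial^+(R)$ and $\partial^-(R)$ are configured so that every geodesic from $R$ to its complement, or back, must pass through a controlled ``gateway'' vertex — mimicking the role of $(j+1)_1$ in Lemma \ref{3*n-c=3k} — and (c) the relevant directed distances inside $R$ (between the two endpoints of each boundary arc) are small enough that no geodesic forces a complement vertex into a convex set containing $R$. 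The distinctive feature here, compared with the $3j$ case, is that the ``cut'' between $R$ and the rest of the grid runs through the middle of a column rather than cleanly between two columns: the vertex $(j+1)_3$ lies outside $R$ while $(j+1)_1, (j+1)_2$ lie inside. So I must ensure $(j+1)_3$ is reached from and reaches $R$ only in a way that does not violate convexity — e.g., by arranging that $d_D((j+1)_1,(j+1)_3)$ and $d_D((j+1)_2,(j+1)_3)$ are realized by paths leaving $R$, while the returning geodesics re-enter $R$ through a single vertex. Third, verify strong connectivity of $D$ (routine, by exhibiting that one can circulate through $G_1$, through $D[R]$, and through the right-hand whirlpool). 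Fourth, verify $R$ is convex: check every pair of vertices one in $R$, one outside, and confirm no geodesic between them has an internal vertex on the wrong side; the parity of $j$ splits this into two essentially identical cases, as in the previous lemmas. Fifth, verify maximality: if $C$ is convex with $|C| > 3j+2$, then $C$ meets the right-hand whirlpool in at least two adjacent vertices (since $D[C]$ is strong by Lemma \ref{girthcon} and $C$ cannot fit inside $R$), hence contains it; then chase the forced geodesics backward through column $j+1$, column $j$, column $j-1$ into $G_1$, using the computed distances to show $C = V(D)$.

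The main obstacle I expect is the bookkeeping in step two: because the boundary between $R$ and its complement splits a column, there are more boundary arcs and more candidate geodesics to control than in Lemma \ref{3*n-c=3k}, and one must simultaneously (i) keep $D[R]$ strong, (ii) prevent any short geodesic from $R$ to the complement from having an interior vertex outside $R$, and (iii) prevent the ``swallowing'' chain in the maximality argument from stalling. Getting an explicit arc-by-arc orientation on columns $j-1, j, j+1, j+2$ that threads all three constraints — and checking it survives both parities of $j$ — is the delicate part; everything else (strong connectivity, the $|C|>3j+2$ induction) is routine once that core orientation is pinned down, and a figure analogous to Figure \ref{3*n-c=3kFig} would make the construction transparent.
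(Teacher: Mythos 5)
Your plan identifies the right target set (your $R = V(P_j \Box P_3) \cup \{(j+1)_1,(j+1)_2\}$ is exactly the paper's $V\bigl((P_{j+1} \Box P_3) - (j+1)_3\bigr)$) and the right verification outline, but the proof has a genuine gap: the explicit orientation, which is the entire content of the lemma, is never produced. You yourself flag the arc-by-arc orientation of columns $j-1, j, j+1, j+2$ as ``the delicate part'' and leave it open. You also make that part harder than it needs to be: since the L-shaped region $(P_{j+1}\Box P_3) - (j+1)_3$ has a connected interior dual, it can be oriented as a \emph{single} whirlpool, which simultaneously makes $D[R]$ strong and gives the swallowing property (any convex set meeting two adjacent vertices of $R$ contains all of $R$) with no hand-tuning of interior columns. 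The paper does exactly this, and only the handful of boundary edges around $(j+1)_3$ and between columns $j+1$ and $j+2$ need bespoke orientation. Your proposed decomposition (whirlpool only on $P_{j-2}\Box P_3$ plus three hand-oriented columns) imports the structure of the $3j$ lemma, where the cut is clean between columns; here it multiplies the unresolved bookkeeping.

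The second missing idea is a case split on the size of $j$. If you orient the complement $G_2 = G - (P_{j+1}\Box P_3)$ as a whirlpool and seal the boundary so that $R$ is convex, then for $j < \lfloor n/2 \rfloor$ the set $V(G_2)$ has $3(n-j-1) > 3j+2$ vertices and is itself in danger of being a convex set, so your construction would realize a convexity number larger than $3j+2$. Your maximality step assumes any large convex set $C$ that swallows the right-hand whirlpool can be chased back into $R$, but that chase only works if the boundary arcs are arranged so that $V(G_2)$ alone fails to be convex --- which requires a genuinely different orientation near columns $j+1$ through $j+3$ in the small-$j$ regime (the paper routes a directed path through column $j+2$ and shifts the second whirlpool to start at column $j+3$). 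Without this, the construction fails for roughly half the admissible values of $j$.
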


\begin{figure}
\begin{center}
\includegraphics[width=\textwidth]{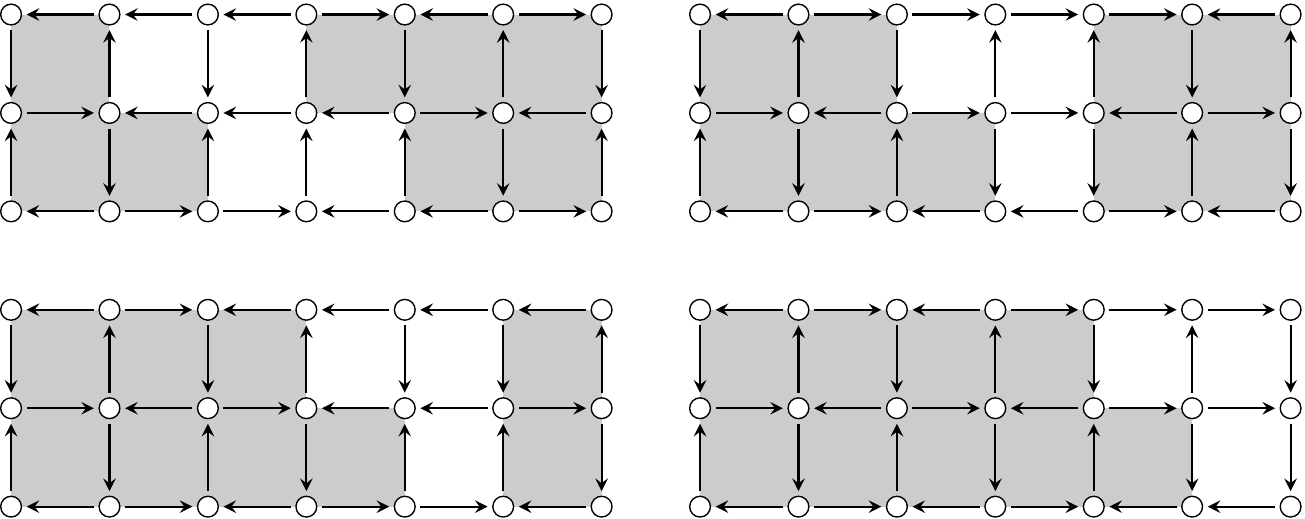}
\caption{The orientations used in the proof of Lemma \ref{3*n-c=3k+2} with $n=7$ and $j \in \{2,3,4,5\}$.} \label{3*n-c=3k+2Fig}
\end{center}
\end{figure}

\begin{proof}
Let $D_1$ be the orientation of $G_1 = (P_{j+1} \Box P_3) - (j+1)_3$ as a whirlpool.   We will consider two cases.

For the first case consider $\left\lfloor \frac{n}{2} \right\rfloor \le j \le n-3$, and let $D_2$ be the orientation of $G_2 = G - (P_{j+1} \Box P_3)$ as a whirlpool, if $j$ is even, or as an anti-whirlpool, if $j$ is odd.

If $j$ is even, orient the remaining edges as $((j+1)_3, j_3), ((j+1)_3, (j+1)_2), ((j+2)_3, (j+1)_3), ((j+2)_2, (j+1)_2), ((j+1)_1, (j+2)_1)$ to obtain the orientation $D$ of $G$.   It is direct to verify that $D$ is strong.   Observing that $d((j+1)_1, (j+1)_2) = 1$ and $d((j+1)_1, j_3) = 3$ it is easy to verify that $V(D_1)$ is a convex set of $D$.   If $C$ is a convex set of $D$ such that $|C| > 3j+2$, then $|C \cap (V(D) \setminus V(D_1))| \ne \varnothing$.   But $\partial^+(D_1) = \{ ((j+1)_1, (j+2)_1) \}$, and hence $|C \cap V(D_2)| \ge 2$.   Recalling that $D_2$ is a whirlpool and observing that $((j+2)_3, (j+1)_3, j_3)$ is a $(j+2)_3 j_3$-geodesic in $D$, we conclude that $C = V(D)$.   Hence $\con (D) = 3j+2$.

If $j$ is odd, orient the remaining edges as $(j_3, (j+1)_3), ((j+1)_2, (j+1)_3), ((j+1)_3, (j+2)_3), ((j+1)_2, (j+2)_2), ((j+2)_1, (j+1)_1)$ to obtain the orientation $D$ of $G$.   This orientation is, locally, the dual orientation of the case when $j$ is even, so analogous arguments show that $V(D_1)$ is a convex set and $\con (D) = 3j+2$.

As a second case, assume that $2 \le j < \left\lfloor \frac{n}{2} \right\rfloor $ or $j = n-2$.   When $j$ is odd, orient $(j_3, (j+1)_3, (j+2)_3, (j+2)_2, (j+2)_1, (j+1)_1)$ as a directed path, and orient the arcs $((j+1)_2, (j+1)_3), ((j+1)_2, (j+2)_2)$.   If $j \ne n-2$, let $D_2$ be the orientation of $G_2 = G \setminus (P_{j+2} \Box P_3)$ as an anti-whirlpool and orient the remaining edges of $G$ as $((j+2)_1, (j+3)_1), ((j+2)_2, (j+3)_2), ((j+3)_3, (j+2)_3)$ to obtain $D$.   Clearly $D$ is strong.   Also, it is direct to verify that $V(D_1)$ is a convex set of $D$ with $3j+2$ vertices.

Let $C$ be a convex set of $D$ such that $|C| > 3j+2$.   If $|C \cap V(D_2)| \ge 2$, then $V(D_2) \subseteq C$.   But $((j+3)_3, (j+2)_3, (j+2)_2, (j+2)_1, (j+3)_1)$ is a $(j+3)_3 (j+3)_1$-geodesic in $D$, and thus, $(j+2)_i \in C$ for $1 \le i \le 3$.   Also, $((j+2)_1, (j+1)_1, j_1, j_2, (j+1)_2, (j+1)_3, (j+2)_3)$ is a $(j+2)_1 (j+2)_3$-geodesic in $D$.   This implies $C=V(D)$, because $|C \cap V(D_1)| \ge 2$.

Otherwise, and because $j \ge 2$, $V(D_1) \subseteq C$ and $v \in C$ for some $v \in \{ (j+1)_3, (j+2)_1, (j+2)_2, (j+2)_3 \}$.   In any case, $(j_3, (j+1)_3, (j+2)_3, (j+2)_2, (j+2)_1, (j+1)_1)$ is the union of a $j_3 v$-geodesic and a $v (j+1)_1$-geodesic in $D$ (and the case $j=n-2$ is finished) .   Since $((j+2)_2, (j+3)_2, (j+3)_3, (j+2)_3)$ is a $(j+2)_2 (j+2)_3$-geodesic in $D$, we have $|C \cap V(D_2)| \ge 2$ and $C=V(D)$.

When $j$ is even, as in the previous case, we can orient the remaining edges of $G$ to obtain, locally, an orientation that is dual to the orientation when $j$ is odd.   Hence, analogous arguments can be followed to prove that $V(D_1)$ is a maximum convex set of $D$.

Therefore, $\con(D)=3j+2$.

\end{proof}

\begin{lemma} \label{3*n-c=3k+1}
Let $n \ge 2$ be an integer and let $G$ be the grid $P_n \Box P_3$.   If $j$ is an integer such that $3 \le j \le n-2$, then $3j+1 \in S_{SC} (G)$.
\end{lemma}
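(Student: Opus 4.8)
The plan is, for each $j$ with $3 \le j \le n-2$, to construct a strong orientation $D$ of $G = P_n \Box P_3$ whose largest proper convex set has exactly $3j+1$ vertices. Following the pattern of Lemmas \ref{3*n-c=3k} and \ref{3*n-c=3k+2}, the designated maximum convex set is $V(D_1)$, where $D_1$ is the whirlpool orientation of the $L$-shaped region $G_1 = (P_{j+1}\Box P_3)\setminus\{\,j_3,\,(j+1)_3\,\}$. This region has exactly $3j+1$ vertices and is induced by a connected set of faces of the grid, so the whirlpool orientation is defined on it; moreover every edge of $G_1$ lies on a $4$-cycle of $G_1$, so by the argument of Proposition \ref{lcon1grids} the digraph $D_1$ is strong, and, as observed after Corollary \ref{con=1}, every convex set of $D$ meeting $V(D_1)$ in two adjacent vertices must contain all of $V(D_1)$. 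The hypothesis $j \ge 3$ is exactly what keeps $(j-1)_3$ of degree at least $2$ in $G_1$: for $j=2$ the vertex $1_3$ would be an end-vertex of $G_1$, and then $G_1$ admits no strong subdigraph by Lemma \ref{girthcon}. (At the other end, $j=n-1$ would make $3j+1=|V(G)|-2$, excluded by Lemma \ref{forbid} when $n\ge 3$, so $j\le n-2$ is also necessary.)

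For the rest of $G$ I would distinguish cases as in the proof of Lemma \ref{3*n-c=3k+2}. When $j$ is large enough that the complement of $V(D_1)$ has at most $3j+1$ vertices and $j\le n-3$, let $D_2$ be the whirlpool or anti-whirlpool, according to the parity of $j$, on the columns $j+2,\dots,n$, and orient the seven remaining edges — those incident to $j_3$ and to $(j+1)_3$, together with $(j+1)_1(j+2)_1$ and $(j+1)_2(j+2)_2$ — as in the earlier lemmas, so that $V(D_1)$ is convex while any convex set that reaches out of $V(D_1)$ toward $D_2$ is forced to be $V(D)$; the computation would be done for one parity and the other obtained by the dual orientation. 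When $j$ is small, I would instead insert a "bridge column" between $D_1$ and an (anti-)whirlpool $D_2$ on columns $j+3,\dots,n$, orienting the bridge so that some geodesic between two vertices of $D_2$ leaves $D_2$ through column $j+1$; this destroys the convexity of $V(D_2)$ — necessary now that $|V(D_2)|>3j+1$ — and simultaneously supplies a geodesic running from $V(D_2)$ back into $V(D_1)$. Finally, when $j=n-2$ the region to the right of $D_1$ is a single column, too small to be a whirlpool, and it (with its incident bridge edges) is oriented by hand as was done for $j=n-1$ in Lemma \ref{3*n-c=3k}.

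The verification then has three steps. First, $D$ is strong: $D_1$ and $D_2$ are strong and the bridge arcs create directed paths between them through $j_3$ and $(j+1)_3$. Second, $V(D_1)$ is convex: since $D_1$ is a whirlpool, distances inside it are grid distances, and the bridge orientation admits no short detour through $G_2$, so $d_D(u,v)=d_{D_1}(u,v)$ for all $u,v\in V(D_1)$; combined with Observation \ref{ai-ao} applied at the boundary vertices $(j+1)_1$, $(j+1)_2$, $j_2$, this shows that no $uv$-geodesic with $u,v\in V(D_1)$ leaves $V(D_1)$. Third, no proper convex set exceeds $3j+1$ vertices: a convex $C$ with $|C|>3j+1$ must meet $V(D_2)$ — the sets squeezed between $V(D_1)$ and $V(D_1)\cup\{j_3,(j+1)_3\}$ are ruled out by Observation \ref{ai-ao} together with the chosen orientation at $j_3$ and $(j+1)_3$ — and since $D[C]$ is strong, a short argument on the bridge arcs forces two adjacent vertices of $D_2$ into $C$, hence $V(D_2)\subseteq C$; an explicit geodesic running from $V(D_2)$ through $j_3$ and $(j+1)_3$ back into $V(D_1)$ then forces two adjacent vertices of $D_1$ into $C$, so $V(D_1)\subseteq C$, and picking up $j_3$ and $(j+1)_3$ along the way gives $C=V(D)$. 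Hence $\con(D)=3j+1$.

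I expect the main obstacle to be this last step in the small-$j$ regime, where $V(D_2)$ is larger than the target: the bridge must be arranged so that $V(D_2)$, and every set strictly between it and $V(D)$, fails to be convex while $V(D_1)$ stays convex. As in Lemma \ref{3*n-c=3k+2}, one carefully placed geodesic that exits $D_2$ through column $j+1$ does both jobs at once. The remainder — the parity split between whirlpool and anti-whirlpool for $D_2$, the boundary case $j=n-2$, and tracking which of the finitely many candidate geodesics through a given boundary vertex actually occurs — is routine but bookkeeping-heavy.
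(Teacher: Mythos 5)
Your overall architecture (an $L$-shaped region $(P_{j+1}\Box P_3)-\{j_3,(j+1)_3\}$ of $3j+1$ vertices as the target convex set, a whirlpool on the remaining columns, and a handful of bridge arcs) matches the paper's, and for $3\le j\le n-3$ the paper does exactly what you propose: it orients the whole $L$-shape as a single whirlpool $D_1$ and arranges the bridge so that $\partial^+(V(D_1))$ is a single arc. But your verification of the crucial step --- that $V(D_1)$ is convex --- rests on the claim that ``since $D_1$ is a whirlpool, distances inside it are grid distances,'' and that claim is false. In a whirlpool, travelling against the circulation forces detours around $4$-cycles: already in the $P_3\Box P_2$ whirlpool one has $d(1_1,3_1)=4$ while the grid distance is $2$, and in the $L$-shaped whirlpool (say $j$ odd) one computes $d_{D_1}\bigl((j+1)_2,(j-1)_3\bigr)=5$ against a grid distance of $3$ (the paper states this distance explicitly). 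This is not a cosmetic slip: the entire difficulty of the convexity check is that these inflated internal distances must still beat every excursion through $j_3$, $(j+1)_3$ and $G_2$, which is why the paper must choose the parity of $D_2$ and the directions of all five arcs entering $V(D_1)$ with care. Your argument as written would ``certify'' convexity of sets that are not convex.

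The false claim becomes fatal in your boundary case $j=n-2$, where you propose to keep the pure whirlpool on the $L$-shape and hand-orient the five remaining vertices $j_3,(j+1)_3,(j+2)_1,(j+2)_2,(j+2)_3$. One can check that this is impossible: convexity of $V(D_1)$ successively forces $(j_3,(j-1)_3)$, $(j_3,j_2)$, $((j+1)_3,j_3)$, $((j+1)_3,(j+1)_2)$, $((j+2)_3,(j+1)_3)$ and $((j+2)_2,(j+2)_3)$, and then whichever in-arc $(j+2)_2$ receives produces an external directed path of length exactly $5$ from $(j+1)_2$ to $(j-1)_3$ (or of length $3$ from $(j+1)_1$ to $(j+1)_2$), tying the internal distance and placing vertices outside $V(D_1)$ on a geodesic between two of its vertices. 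The paper escapes precisely by \emph{not} using the whirlpool on the last two columns of the $L$-shape when $j=n-2$: it reverses the final $4$-cycle and orients $j_1j_2$ as $(j_2,j_1)$, which brings $d_{D_1}\bigl((j+1)_2,(j-1)_3\bigr)$ down to $3$ and kills the competing external geodesic. So you need a different $D_1$ in that case, not merely a different bridge. The remaining issues are of completeness rather than correctness: the bridge orientations are never pinned down (``as in the earlier lemmas'' points at configurations that differ from the present one), and the maximality argument is deferred, so even in the main case the proposal is a plan whose one explicit justification does not hold.
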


\begin{figure}
\begin{center}
\includegraphics[width=\textwidth]{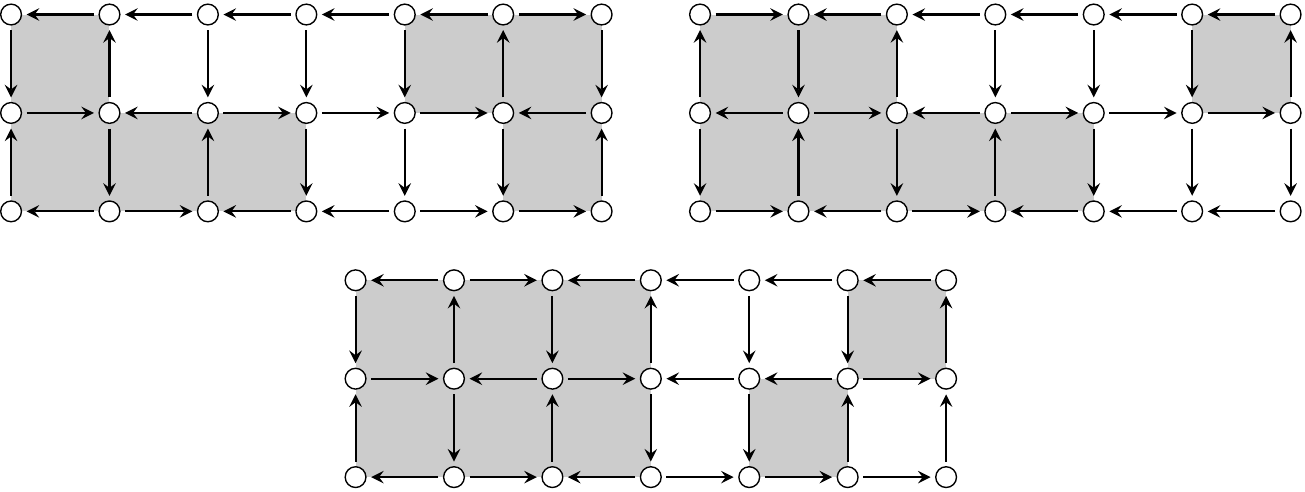}
\caption{The orientation used in the proof of Lemma \ref{3*n-c=3k+1} for $n=7$ and $j \in \{3, 4, 5 \}$.} \label{3*n-c=3k+1Fig}
\end{center}
\end{figure}

\begin{proof}
We will assume that $j$ is odd, the remaining case can be dealt similarly.   Orient $G'_1 = P_{j-1} \Box P_3$ as a whirlpool to obtain $D'_1$.   We will consider two cases.

For the first case, suppose that $j = n-2$.   Let $D_1$ be the digraph obtained from $G_1 = (P_{j+1} \Box P_3) - \{ j_3, (j+1)_3 \}$ by orienting $G'_1$ as $D'_1$, $((j-1)_1, j_1, (j+1)_1, (j+1)_2, j_2, (j-1)_2)$ as a directed path, and the remaining edge of $G[G_1]$ as $(j_2, j_1)$.   Also, orient $((j+1)_1, (j+2)_1, (j+2)_2, (j+2)_3, (j+1)_3, j_3, (j-1)_3)$,  and $((j+1)_3, (j+1)_2, (j+2)_2)$ as directed paths.   Orient the remaining edge as $(j_3, j_2)$ to obtain the digraph $D$.   It is immediate to verify that $D$ is a strong digraph.

Note that $\partial^+ (D_1) = \{ ((j+1)_1, (j+2)_1), ((j+1)_2, (j+2)_2) \}$ and $\partial^- (D_1) = \{ (j_3, (j-1)_3), (j_3, j_2), ((j+1)_3, (j+1)_2) \}$.   Observing that $d((j+1)_2, j_2) = d((j+1)_1, (j+1)_2) = 1$, $d((j+1)_1, j_2)=2$, $d((j+1)_1, (j-1)_3)=4$, and $d((j+1)_2, (j-1)_3)=3$, it is easy to conclude that $V(D_1)$ is a convex set of $D$ with $3j+2$ vertices.

Let $C$ be a convex set of $D$ such that $|C| > 3j+1$.   Since $j=n-2$, then $|C \cap V(D'_1)| \ge 2$ and hence $V(D'_1) \subseteq C$.   Also, there is at least one vertex $v \in C \cap (V(D) \setminus V(D_1))$.   Regardless of the choice of $v$, the directed path starting at $(j-1)_1$ and defined by the sequence $(r,r,r,u,u,l,l,l)$ results from the union of a $(j-1)_1 v$-geodesic and a $v (j-1)_3$-geodesic.   Hence, $V(D) \subseteq C$ and therefore $\con (D) = 3j+1$

As a second case, assume that $j \le n-3$.   Let $D_1$ and $D_2$ be the digraphs obtained by orienting both $G_1 = (P_{j+1} \Box P_3) - \{ j_3, (j+1)_3 \}$ and $G_2 = G - (V(G_1) \cup \{ j_3, (j+1)_3, (j+2)_1 \})$ as whirlpools.   Orient $((j+1)_3, (j+1)_2, (j+2)_2, (j+2)_1, (j+1)_1)$ and $((j+2)_3, (j+1)_3, j_3,(j-1)_3)$ as directed paths.   If $j = n-3$, orient $((j+3)_1, (j+2)_1)$, and orient the same edge as $(j+2)_1, (j+3)_1$ otherwise.   Finally, orient the remaining edges as $(j_3, j_2)$ to obtain the digraph $D$.   It is direct to verify that $D$ is strong.

Observe that $\partial^+ (D_1) = \{ ((j+1)_2, (j+2)_2) \}$ and $\partial^- (D_1) = \{ (j_3, j_2), (j_3, (j-1)_3), ((j+1)_3, (j+1)_2), ((j+2)_1, (j+1)_1) \}$.   Noting that $d((j+1)_2, (j+1)_1)=1$, $d((j+1)_2, j_2)=3$ and $d((j+1)_2, (j-1)_3) = 5$, it is not hard to verify that $V(D_1)$ is a convex set of $D$.

Let $C$ be a convex set of $D$ such that $|C| > 3j+1$.   If $j=n-3$, then $|V(D_1) \cap C| \ge 2$, and $V(D_1) \subseteq C$.    If $j < n-3$ and  $|C \cap V(D_2)| \ge 2$, then $V(D_2) \subseteq C$.    But $((j+2)_2, (j+2)_1, (j+3)_1)$ is a $(j+2)_2 (j+3)_1$-geodesic in $D$, which implies $(j+2)_1 \in C$.   The directed path with initial vertex $(j+2)_1$ and defined by the sequence $(l,l,u,r,r)$ is a $(j+2)_1 (j+2)_2$-geodesic in $D$.   From here we observe that $|V(D_1) \cap C| \ge 2$ and thus $V(D_1) \subseteq C$.   If $j < n-3$ and $|C \cap V(D_2)| \le 1$, then $|V(D_1) \cap C| \ge 2$, and $V(D_1) \subseteq C$.   Hence, in every case $V(D_1) \subseteq C$.   Since there is at least one vertex from $V(D) \setminus V(D_1)$ in $C$, necessarily $(j+2)_2 \in C$.   The directed path starting at $(j+2)_2$ and defined by the sequence $(r,u,l,l,l,l)$ is a $(j+2)_2 (j-1)_3$-geodesic, and hence $V(D_2) \subseteq C$.   But if $V(D_2) \cup V(D_1) \subseteq C$, it is easy to verify that $C = V(D)$.   Hence, $\con (D) = 3j+1$.

\end{proof}

So far, we have every integer of the convexity spectrum of $P_n \Box P_3$, except for $4$.   Our next theorem deals with the remaining case.

\begin{lemma} \label{n*3-c=4}
If $n \ge 3$ is an integer and $G$ is the grid $P_n \Box P_3$, then $4 \in S_{SC} (G)$
\end{lemma}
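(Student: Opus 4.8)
The goal is to exhibit, for every $n \ge 3$, a strong orientation $D$ of $G = P_n \Box P_3$ with $\con(D) = 4$. By Lemma~\ref{girthcon} any convex set of size at least $2$ must induce a strong subdigraph, and by Theorem~\ref{no2} there is no connected graph of order at least $4$ with convexity number $2$, so a convex set of size $4$ must induce a subgraph containing a $4$-cycle; the only $4$-cycles in $G$ are the unit squares. Thus the plan is to find an orientation in which every unit square is a directed $4$-cycle (so each is a convex set of size $4$ by the argument of Proposition~\ref{lcon1grids}, since in a triangle-free digraph $u\to v$ forces $d(v,u)=3$), while simultaneously no larger convex set exists. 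The natural candidate is a whirlpool-type orientation, but a pure whirlpool has $\con = 1$ by Proposition~\ref{lcon1grids}; I would instead take a whirlpool on most of the grid and perturb the orientation near one end (or use two whirlpool/anti-whirlpool blocks joined so that parallel arcs create a short geodesic), exactly as in the proofs of Lemmas~\ref{3*n-c=3k}--\ref{3*n-c=3k+1}, but now arranging that \emph{every} unit square remains a directed $4$-cycle.

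The key steps, in order, are as follows. First, I would describe the orientation explicitly: orient an initial block $P_{n'} \Box P_3$ as a whirlpool and the complementary block as a whirlpool or anti-whirlpool chosen so that the edges joining the two blocks become parallel arcs; then check directly that $D$ is strongly connected and that every unit square of $G$ is a directed $4$-cycle, so that each unit square is a convex set of cardinality $4$, giving $\con(D) \ge 4$. Second, I would prove $\con(D) \le 4$: let $C$ be a convex set with $|C| \ge 2$; by Lemma~\ref{girthcon}, $D[C]$ is strong, so $C$ contains two adjacent vertices, which forces $C$ to contain the vertex set of some unit square (since $u \to v$ implies $d(v,u) = 3$ and the unique $vu$-geodesic is around a square). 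Third — the crux — I would show that once $C$ contains one full unit square, convexity propagates: because all squares are directed, any square sharing an edge with one already inside $C$ is forced in (again using that the arc reversal distance is $3$), and by the connectedness of the interior dual of $G$ this would force $C = V(D)$ unless $C$ is exactly one square. Handling the propagation so that it is blocked precisely at size $4$ and nowhere before is the delicate part: the perturbation near the boundary between blocks must be engineered so that a convex set containing a single square there is still forced to grow, i.e. the only convex sets of size exactly $4$ are whole squares and no square is "isolated".

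The main obstacle I anticipate is verifying the upper bound $\con(D)\le 4$ at the junction between the two whirlpool blocks: there the orientation is not locally a whirlpool, so I must check by hand that no convex set of size $5,6,\dots$ can sit astride the boundary — typically by exhibiting, for each candidate medium-sized $C$, a geodesic leaving $C$ (an $x_u y_v$-type or around-a-square geodesic), using Observation~\ref{ai-ao} to constrain the in/out-neighbourhoods of the two or three vertices outside $C$. A secondary technicality is the parity of $n$ (whirlpool vs.\ anti-whirlpool on the second block) and the small cases $n = 3, 4$, which I would simply check directly. I would also remark, as the authors do elsewhere, that the even case is locally dual to the odd case, so only one parity needs full detail.
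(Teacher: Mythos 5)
There is a genuine gap, and it sits at the very centre of your plan. You propose an orientation in which \emph{every} unit square of $P_n\Box P_3$ is a directed $4$-cycle and claim that each such square is then a convex set of size $4$. Both halves of this are problematic. First, if every unit square is a directed $4$-cycle, then any two squares sharing an edge traverse that common arc in the same direction and hence have opposite rotational senses; since the interior dual of the full grid is connected, this forces the orientation to be exactly a whirlpool (or its reversal) on all of $G$, and Proposition~\ref{lcon1grids} then gives $\con(D)=1$, not $4$. So ``perturb the whirlpool while keeping every unit square a directed $4$-cycle'' is not possible. Second, even locally, a square $S$ that is a directed $4$-cycle is \emph{not} convex when a neighbouring square $S'$ sharing an arc $(u,v)$ is also a directed $4$-cycle: the $vu$-geodesic of length $3$ can be taken around $S'$ just as well as around $S$, so the convex hull of $V(S)$ already contains $V(S')$, and your own propagation step then shows the hull of any square is $V(D)$. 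Your fallback construction (two whirlpool/anti-whirlpool blocks glued along parallel arcs, as in Lemmas~\ref{3*n-c=3k}--\ref{3*n-c=3k+1}) produces maximum convex sets that are entire sub-grids $P_j\Box P_3$, of size $3j$, and cannot yield $4$ either.

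What is actually needed --- and what the paper does --- is the opposite of ``all squares directed'': colour the $2\times(n-1)$ array of faces in a checkerboard pattern, orient only the \emph{gray} squares as directed $4$-cycles (alternating whirlpool/anti-whirlpool so the shared-edge constraints are consistent), and orient the remaining edges so that \emph{no white square} is a directed cycle. Gray squares pairwise share at most a corner vertex, so each gray square really is convex, giving $\con(D)\ge 4$. For the upper bound, a convex set $C$ with $|C|>4$ must contain two gray squares meeting at a corner $v$; one then exhibits an explicit geodesic of length $2$ or $4$ through the white square at $v$ that forces its vertices into $C$, and the process propagates to $C=V(D)$. The deliberate breaking of the white squares is precisely the mechanism that stops the hull of a single gray square from growing, and it is the ingredient your proposal is missing.
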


\begin{figure}
\begin{center}
\includegraphics{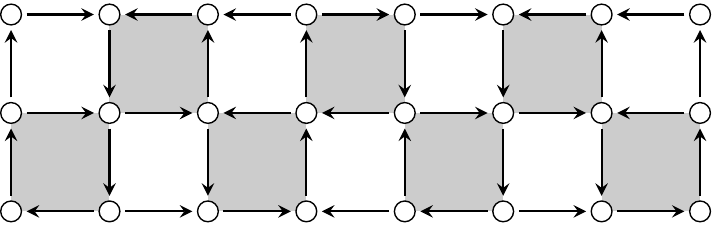}
\caption{The orientation used in the proof of Lemma \ref{n*3-c=4} for $n=8$.} \label{n*3-c=4Fig}
\end{center}
\end{figure}

\begin{proof}
Consider the standard plane embedding of $G$ and color the interior faces gray and white with a checkerboard-like pattern, coloring the square on the bottom left corner with gray.   We will define an orientation of the arcs of $G$ using this coloring, an example can be seen on Figure \ref{n*3-c=4Fig}.

There are two rows of squares.   Enumerate the gray squares in each row from left to right.   Orient the bottom left corner square as a whirlpool and, from here, orient all the gray squares in its row alternating whirlpool and anti-whirlpool orientations.   Orient all the gray squares in the upper row following the same principle, but start orienting as an anti-whirlpool the first gray square.   At this point, every arc dividing two interior faces of $G$ has received an orientation.   Every remaining unoriented edge $e$ of $G$ divides a white square from the exterior face of $G$.   Thus, the edge $e$ lies in exactly one square of $G$, and has one parallel arc $a$ in the square.   It $e$ is not an edge of a corner square, orient it in the same direction as $a$.   There are four edges belonging to the white corner squares that remain unoriented.   Orient the remaining edges as $2$-paths in such way that there are not white oriented squares.   Let $D$ be the digraph obtained by this orientation.   Clearly, $D$ is strong and the vertices of each gray square conform a convex set.   

Let $C$ be a convex set of $D$ such that $|C| > 4$.   There must be two gray squares $S_1$ and $S_2$ such that $v \in V(S_1) \cap V(S_2)$ and $V(S_1) \cup V(S_2) \subseteq C$.    Since $v$ is an interior vertex, it belongs to two white squares.   Let $u$ be a vertex in the opposite corner in one of these white squares $S_3$.   Assume without loss of generality that $v$ is the lower left corner of $S_1$, the upper right corner of $S_2$, and the lower right corner of $S_3$; the remaining cases can be dealt similarly.

If $u$ is the middle vertex of a $S_1 S_2$-path or a $S_2 S_1$-path of length two, then $u \in C$, and hence $V(S_3) \subseteq C$.

Otherwise, let $x$ and $y$ be the upper right and lower left corners of $S_3$, respectively.   Hence, $d(x,y) = d(y,x)=4$ and either the sequence $(l,l,d,r)$ starting from $x$ determines an $xy$-directed path of length $4$, or the sequence $(l,u,r,r)$ determines a $yx$-directed path of length $4$.   In either case, $V(S_3) \subseteq C$.

It can be verified inductively that $C = V(D)$, and hence, $\con (D) = 4$. 

\end{proof}

\begin{theorem}
If $n \ge 3$ is an integer and $G$ is the grid $P_n \Box P_3$, then $$S_{SC} (G) = [1,3n-3]  \setminus \{ 2, 3, 5, 7\}.$$
\end{theorem}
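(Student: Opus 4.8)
The plan is to assemble the result from the lemmas already proved, and to argue that no value other than those excluded can occur. First I would handle the \emph{membership} direction. By Corollary~\ref{con=1} we have $1 \in S_{SC}(G)$. For an integer $c$ with $4 \le c \le 3n-3$, write $c = 3j + t$ with $t \in \{0,1,2\}$ and $j \ge 1$; the three families of Lemmas~\ref{3*n-c=3k}, \ref{3*n-c=3k+1}, \ref{3*n-c=3k+2} realize, respectively, $3j$ for $2 \le j \le n-1$ (giving $6,9,\dots,3n-3$), $3j+1$ for $3 \le j \le n-2$ (giving $10,13,\dots,3n-5$), and $3j+2$ for $2 \le j \le n-2$ (giving $8,11,\dots,3n-4$). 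Together with $4 \in S_{SC}(G)$ from Lemma~\ref{n*3-c=4}, one checks that the union of these sets is exactly $\{1,4,6,8,9,10,\dots,3n-3\} = [1,3n-3] \setminus \{2,3,5,7\}$: the only residues/small values not covered by the generic ranges are $t=1$ with $j<3$ (i.e.\ $c \in \{4,7\}$) and $t=2$ with $j<2$ (i.e.\ $c=2$), of which $4$ is recovered separately while $2,3,5,7$ are precisely the claimed exceptions (with $|V| = 3n$, the value $|V|-1 = 3n-1$ and smaller exclusions must also be checked against $n$, but for $P_n\Box P_3$ the bound $m=3$ keeps $3n-3$ as the true maximum, as shown below).

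Second, the \emph{non-membership} direction. The values $2,3,5$ are excluded by Lemma~\ref{forbid} (first item), which also gives $|V|-1 = 3n-1 \notin S_{SC}(G)$; applying the second item of Lemma~\ref{forbid} with $m=3$ (so only $i \in \{3\}$ is available, since $n,m \ge i$ forces $i \le 3$) rules out $|V|-2 = 3n-2$. It remains to exclude $3n-3$… wait — $3n-3$ \emph{is} in the spectrum by Lemma~\ref{3*n-c=3k} with $j=n-1$, so the top of the interval is attained and no further large values need excluding beyond $3n-2, 3n-1$. Thus the complement of the claimed set within $[1,|V|-1] = [1,3n-1]$ consists of $\{2,3,5,7\} \cup \{3n-2,3n-1\}$, and every element of this complement is excluded: $2,3,5,3n-1$ by the first item of Lemma~\ref{forbid}, $3n-2$ by the second item with $i=3$, and $7$ by the dedicated argument below. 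So the only genuinely new work is showing $7 \notin S_{SC}(P_n \Box P_3)$.

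For $7 \notin S_{SC}(G)$ I would argue directly. Suppose $D$ is a strong orientation of $P_n\Box P_3$ with a convex set $C$, $|C| = 7$; by Lemma~\ref{girthcon}, $D[C]$ is strong, so $C$ induces a strong subdigraph of the grid on $7$ vertices. I would enumerate the connected induced subgraphs of $P_n \Box P_3$ on $7$ vertices that admit a strong orientation (equivalently, are bridgeless): since a strong subgraph must be $2$-edge-connected, every vertex of $C$ has degree $\ge 2$ inside $C$, which in a subgraph of the $3$-wide grid forces $C$ to be a union of unit squares; with $7$ vertices this is impossible (a $2\times 2$ block uses $4$ vertices, two edge-sharing blocks use $6$, an L-tromino of three blocks uses $8$), so in fact \emph{no} $7$-vertex induced subgraph of $P_n\Box P_3$ is bridgeless. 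Hence $C$ cannot induce a strong subdigraph, contradicting Lemma~\ref{girthcon}, and $7 \notin S_{SC}(G)$.

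The main obstacle is the last step: making the "union of unit squares" claim airtight. The subtlety is that $C$ need only induce a strong \emph{digraph}, and one must be careful that $2$-edge-connectivity of the underlying graph is genuinely necessary (it is, since a bridge cannot lie on any directed cycle). Once that is granted, the combinatorial classification of bridgeless induced subgraphs of a width-$3$ grid by vertex count is short: every block of such a subgraph is $2$-connected, hence contains a cycle, hence (the grid being bipartite with girth $4$) a $4$-cycle, i.e.\ a unit square, and a minimum-counterexample/peeling argument shows the whole subgraph decomposes into squares glued along edges, forcing $|V(C)| \in \{4,6,8,9,\dots\}$, never $7$ (and incidentally re-confirming $3,5 \notin S_{SC}(G)$). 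I would present this as one clean lemma — "a bridgeless induced subgraph of $P_n \Box P_3$ has $1$ or an even number $\ge 4$ or at least $8$ vertices" — and then the theorem follows by combining it with the membership constructions and Lemma~\ref{forbid}.
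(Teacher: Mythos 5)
Your assembly of the membership direction (Corollary~\ref{con=1} for $1$, Lemmas~\ref{3*n-c=3k}, \ref{3*n-c=3k+1}, \ref{3*n-c=3k+2} and \ref{n*3-c=4} covering $\{4\}\cup\{6\}\cup[8,3n-3]$) and the exclusion of $2,3,5,3n-1,3n-2$ via Theorem~\ref{no2} and Lemma~\ref{forbid} is correct and matches the paper. The genuine gap is in your exclusion of $7$. The combinatorial lemma you propose --- that a bridgeless induced subgraph of $P_n\Box P_3$ must decompose into unit squares glued \emph{along edges}, hence has $4,6,8,\dots$ but never $7$ vertices --- is false. Two unit squares glued at a single \emph{vertex} give a $7$-vertex counterexample: take $C=\{i_1,\,i_2,\,(i+1)_1,\,(i+1)_2,\,(i+1)_3,\,(i+2)_2,\,(i+2)_3\}$. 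The induced subgraph is two $4$-cycles sharing the cut vertex $(i+1)_2$; every edge lies on one of these cycles, so there is no bridge, and by Robbins' theorem $C$ admits a strong orientation. Your peeling argument overlooks gluing at a cut vertex, which is exactly what produces the odd count $4+4-1=7$; so Lemma~\ref{girthcon} alone cannot rule out $7$.

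The paper closes precisely this hole with Lemma~\ref{comcon}: a maximum convex set has a \emph{connected} complement. For the corner-glued configuration above, the complement contains $i_3$ and $(i+2)_1$, whose only neighbours outside $C$ lie in columns $i-1$ and $i+3$ respectively, so the complement always splits into a left piece and a right piece (or contains an isolated vertex when the configuration touches the boundary of the grid); hence such a $C$ is never a maximum convex set. To repair your proof you must either import that complement-connectivity argument, or upgrade your classification to the correct statement: every $7$-vertex bridgeless induced subgraph of $P_n\Box P_3$ consists of two unit squares sharing a corner, and every such set has a disconnected complement. With that addition the rest of your proposal goes through.
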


\begin{proof}
By virtue of Theorem \ref{no2} and Lemmas \ref{forbid}, \ref{3*n-c=3k}, \ref{3*n-c=3k+2}, \ref{3*n-c=3k+1} and \ref{n*3-c=4}, it remains to prove that $7 \notin S_{SC} (G)$.   Let $D$ be a strong orientation of $G$ and $C$ a convex set of $D$.   Lemmas \ref{girthcon} and \ref{comcon} imply that $C$ induces a strong subdigraph of $D$ and that $V(D) \setminus C$ induces a connected subdigraph of $D$, respectively.   But every connected subgraph of $G$ with $7$ vertices has either a vertex of degree $1$, and thus does not admit a strong orientation; or does not have a connected complement.   Hence, $7 \notin S_{SC} (G)$.

\end{proof}

\section{Convex spectra of general grids} \label{SMain}

The following lemma is the cornerstone of the vast majority of the arguments we will use in this section.

\begin{figure}
\begin{center}
\includegraphics{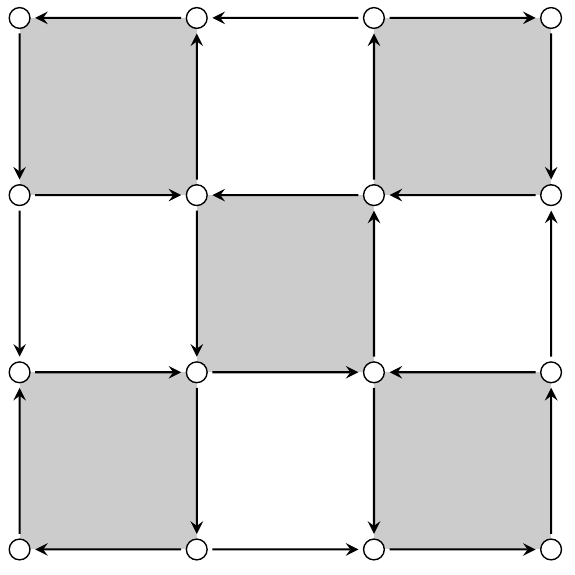}
\caption{The digraph $H$: an orientation of $P_4 \Box P_4$ with convexity number $4$.} \label{4*4-c=4}
\end{center}
\end{figure}

\begin{lemma}
The oriented graph $H$ in Figure \ref{4*4-c=4} has convexity number $4$.
\end{lemma}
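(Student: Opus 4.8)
The plan is to analyze the specific orientation $H$ of $P_4 \Box P_4$ depicted in Figure \ref{4*4-c=4} directly, in two parts: first exhibit a proper convex set of size $4$, then show no proper convex set has size $5$ or more. For the lower bound, $H$ is built from alternating whirlpool/anti-whirlpool orientations of the gray squares, so I would identify one gray square $S_0$ and verify that $V(S_0)$ is convex. By Proposition \ref{lcon1grids}-type reasoning, each gray $4$-cycle is a directed $4$-cycle, so between any two of its vertices $u,v$ the distances satisfy $d(u,v) + d(v,u) = 4$ with $\{d(u,v),d(v,u)\} = \{1,3\}$; since the grid is triangle-free and bipartite, the unique $uv$-geodesic and $vu$-geodesic together cover exactly $V(S_0)$, provided no shortcut leaves the square. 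I would check, using the explicit arc directions at the boundary of $S_0$ (in particular, that the arcs leaving $S_0$ point ``outward'' in a consistent way, as in the constructions of Lemmas \ref{2*n-part1} and \ref{3*n-c=3k}), that every geodesic between two vertices of $S_0$ stays inside $S_0$. Hence $\con(H) \ge 4$.

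For the upper bound I would argue that any convex set $C$ with $|C| \ge 2$ that meets two adjacent vertices of a gray square must contain all of that square (the whirlpool property, exactly as in the remark following Corollary \ref{con=1}). So a convex set of size $\ge 5$ cannot be contained in a single gray square (which has only $4$ vertices), hence it must contain two vertices lying in different gray squares, and since $C$ is connected and induces a strong subdigraph (Lemma \ref{girthcon}), it must contain two adjacent vertices of two distinct gray squares — in fact it must contain an interior vertex $v$ shared by two gray squares together with enough of each. The key combinatorial step is then a spreading argument along the lines of the proof of Lemma \ref{n*3-c=4}: once $C$ contains two full gray squares sharing a vertex $v$, I would show $C$ must also contain the white square $S_3$ at the ``opposite corner'' of $v$, because a suitable directed $2$-path through the shared vertex, or a length-$4$ geodesic of the form described there (sequences like $(l,l,d,r)$ or $(l,u,r,r)$ from one corner of $S_3$ to the other), forces the remaining corner of $S_3$ into $C$. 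Iterating this absorption of adjacent squares, one concludes $C = V(H)$, so no proper convex set of size $\ge 5$ exists.

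I would finish by combining the two parts: $\con(H) \ge 4$ from the explicit gray square, and $\con(H) \le 4$ since every proper convex set has size $1$ (singletons), $2$ — impossible for order $\ge 4$ by Theorem \ref{no2}, but in any case irrelevant here — or exactly $4$ (a gray square), with sizes $\ge 5$ ruled out. Since $3,5,6,\dots$ are excluded by the arguments above and $|V(H)| - 1 = 15$ is excluded because strong orientations of a grid have no sources, sinks, or transitive vertices (as in Lemma \ref{forbid}), the maximum is exactly $4$.

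The main obstacle will be the case analysis in the upper bound: the orientation $H$ is small and explicit, so one must carefully enumerate, for each interior vertex $v$ and each pair of gray squares meeting at $v$, which directed paths of length $\le 4$ exist between corners of the adjacent white square, and verify in every case that a geodesic is forced to lie inside $C$. This is the same style of argument used in Lemma \ref{n*3-c=4}, and the symmetry of the checkerboard construction (whirlpool vs.\ anti-whirlpool, and the reflective symmetries of $P_4 \Box P_4$) should cut the number of genuinely distinct cases down to a manageable few; nonetheless, verifying that no geodesic ``escapes'' a gray square in the lower-bound step, and that some geodesic is always ``trapped'' in the upper-bound step, requires consulting the figure for the precise arc directions rather than relying on a general principle.
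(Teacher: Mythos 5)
Your plan matches the paper's proof in all essentials: the gray squares are the proper convex sets of size $4$, and any convex set with more than $4$ vertices is forced to contain two full gray squares and then, via explicit geodesics through the adjacent white squares, all of $V(H)$. The only step you gloss over is why the two forced gray squares may be assumed to share an interior vertex: the paper gets this by observing that if two \emph{corner} gray squares lie in $C$, then two vertices (hence all four) of the central gray square lie in $C$ as well, a fact your appeal to strong connectivity of $D[C]$ does not by itself deliver.
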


\begin{proof}
It is easy to check that the vertices on the boundary of each of the gray filled squares conform a convex set.   We affirm that any convex set in $H$ has at most $4$ vertices.   Let $C$ be a maximum convex set in $H$ and suppose that $|C| > 4$.   Observe that the intersection of $C$ with the vertices of each gray filled square is either empty, or it has one vertex, or it has four vertices.   Hence, since $|C| >  4$, the vertices of at least two squares are contained in $C$.   If the vertices of two gray filled squares different from the central one are contained in $C$, then there are at least two vertices of the central square in $C$; thus, the vertices of the central square are contained in $C$.   So, by symmetry, we need only to consider two cases.

The first case is when the square on the lower left corner and the central square are contained in $C$.   Assume that the vertex in the lower left corner of $H$ is $(1,1)$.  It is easy to check that $((2,3), (2,4), (1,4), (1,3), (1,2))$ and $((2,1), (3,1), (4,1), (4,2), (3,2))$ are $(2,3)(1,2)$- and $(2,1)(3,2)$-geodesics, respectively.   From here, $(4,2), (3,3) \in C$, and $((4,2), (4,3), (3,3))$ is a $(4,2)(3,3)$-geodesic in $H$.   Therefore, there are at least two vertices of each gray filled square in $C$ and we can conclude that $C = V(H)$, a contradiction.

In the second case, we have the upper left corner and the central square contained in $C$.   Now, $((1,3), (1,2), (2,2))$ is a $(1,3)(2,2)$-geodesic in $H$.   Hence, there are at least two vertices of the lower left corner square in $C$.   So, the lower left corner square is contained in $C$ and we have the condition of the first case.

Since contradictions are obtained in both cases, we conclude that $|C| \le 4$.   Hence, $\con(H) = 4$.

\end{proof}

As the reader would expect, the main part of the argument in the next lemma's proof is the construction of the orientation.   The following lemmas will use similar orientations, so the descriptions will be very detailed in the first ones, and will loose detail as the lemmas progress.

\begin{lemma} \label{n*m-c=4}
Let $n,m \ge 4$ be integers.   If $G= P_n \Box P_m$ is a grid, then $4 \in S_{SG} (G)$.
\end{lemma}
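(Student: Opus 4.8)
The plan is to generalize the construction of the digraph $H$ from Figure \ref{4*4-c=4} (the $P_4 \Box P_4$ grid with convexity number $4$) to an arbitrary $P_n \Box P_m$ grid with $n,m \ge 4$. The idea is to tile the grid with gray squares (directed $4$-cycles) in a checkerboard fashion, using alternating whirlpool and anti-whirlpool orientations along each row of gray squares, exactly as was done in the proof of Lemma \ref{n*3-c=4} for the $n \times 3$ case. First I would fix the standard plane embedding of $G$, color the interior faces in a checkerboard pattern with the bottom-left face gray, and orient every edge separating two gray-or-white faces so that each gray square becomes a directed $4$-cycle: start the bottom-left gray square as a whirlpool and alternate whirlpool/anti-whirlpool along each row, and also alternate the starting orientation between consecutive rows so that no two horizontally or vertically adjacent gray squares "clash" on their shared vertex. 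Then orient the remaining edges (those bordering the exterior face, lying in white squares) parallel to the already-oriented opposite edge of their white square, taking care at the four corner white squares to orient the last few edges as directed $2$-paths so that no white square becomes a directed cycle. Call the resulting orientation $D$; it is strong because the gray $4$-cycles plus the connecting arcs give directed paths between any two vertices, and by construction each gray square is a convex set of size $4$ (since $D$ is triangle-free, an arc $u \to v$ of a directed $4$-cycle forces $d(v,u)=3$, so the whole $4$-cycle is the convex hull of any adjacent pair in it, and the outgoing/incoming arcs of a gray square go to distinct white-square vertices that are not forced back in).

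The core of the argument is then showing $\con(D) = 4$, i.e. that no convex set $C$ has more than $4$ vertices. Suppose $|C| > 4$. As in Lemma \ref{n*3-c=4}, the intersection of $C$ with any gray square has $0$, $1$, or $4$ vertices (since two vertices of a directed $4$-cycle force the whole cycle), so $C$ must fully contain at least two gray squares. I would argue that if $C$ contains two gray squares then it contains two gray squares sharing an interior vertex $v$: indeed, if $C \cap V(S_1)$ and $C \cap V(S_2)$ are nonempty for gray squares $S_1, S_2$, then (by an argument like Claim \ref{C3} / the $n \times 3$ proof) one can find geodesics forcing intermediate white squares and hence more gray squares into $C$ until two adjacent (corner-sharing) ones appear. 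Given gray squares $S_1, S_2$ sharing an interior vertex $v$, the key local step is that the white square $S_3$ "between" them (the white square having $v$ as a corner and positioned diagonally) must also have all four vertices in $C$: letting $x, y$ be the two corners of $S_3$ antipodal along the diagonal through $v$'s opposite corner, one checks that $d(x,y) = d(y,x) = 4$ and that one of the length-$4$ routes around $S_3$ — described by a movement sequence such as $(l,l,d,r)$ from $x$ or $(l,u,r,r)$ from $y$, exactly the sequences used in Lemma \ref{n*3-c=4} — is a directed path in $D$, hence a geodesic whose interior lies in $C$; this puts the remaining vertices of $S_3$, and with them two vertices of every gray square touching $S_3$, into $C$. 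An induction on "distance in the square-adjacency graph" then propagates this to force $V(S) \subseteq C$ for every gray square $S$, and since the gray squares cover $V(G)$ we get $C = V(D)$, contradicting that $C$ is a proper convex set.

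The main obstacle I expect is the bookkeeping for the boundary and corner cases: the clean "$d(x,y)=d(y,x)=4$ around each white square" fact that drives the induction holds cleanly for interior white squares, but the white squares along the border of the grid, and especially the four corner white squares, have fewer than four defining directed arcs and were oriented by hand to avoid white directed cycles, so one must verify that the needed geodesics still exist there (or reroute them through a neighboring interior square). A secondary nuisance is parity: whether the bottom-left gray square's row and column tiling patterns line up depends on $n$ and $m$ mod $2$, and one must confirm the alternation rule never forces two corner-sharing gray squares to impose incompatible orientations on their common vertex — this is where the "alternate the starting orientation per row as well" clause matters, and it should be checked that it is always consistent, just as Figure \ref{n*3-c=4Fig} illustrates for three columns. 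Once the orientation is pinned down and the interior-white-square geodesic lemma is established, the inductive propagation and the conclusion $\con(D) = 4$ follow exactly the pattern of Lemma \ref{n*3-c=4}.
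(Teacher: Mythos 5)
Your construction is the same orientation the paper itself uses (a checkerboard of alternating whirlpool/anti-whirlpool gray squares, boundary edges oriented parallel to their opposite arc, and the white corner squares patched by hand when $n$ or $m$ is odd), and your propagation argument is essentially the paper's as well: the paper packages the local step as ``four gray squares around a common region induce a copy of the $4\times 4$ digraph $H$'' and invokes its separate lemma on $H$, whereas you re-derive the same white-square geodesic step directly, exactly as in Lemma \ref{n*3-c=4}. The points you flag as obstacles (the boundary and corner white squares, and passing from two distant gray squares meeting $C$ to two corner-sharing ones) are precisely the points the paper also treats informally, so the proposal matches the paper's proof in both substance and level of detail.
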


\begin{proof}
Consider the standard plane embedding of $G$ and color the interior faces gray and white with a checkerboard-like pattern, assigning gray to the square on the bottom left corner (like in Figure \ref{4*4-c=4}).   We will define an orientation of the arcs of $G$ using this coloring.

Enumerate the rows of squares from bottom to top.   Enumerate the gray squares in each row from left to right.   Orient the bottom left corner square as a whirlpool and, from here, orient all the gray squares in the first column and first row alternating whirlpool and anti-whirlpool orientations.   Now, the first square or every odd row is oriented, so we can orient all the gray squares in the odd rows alternating whirlpool and anti-whirlpool orientations.   A similar idea can be used to orient all the gray squares in even rows, but start orienting as an anti-whirlpool the first gray square on the second row.   At this point, every arc dividing two interior faces of $G$ has received an orientation.   We will consider two cases.

First, suppose that $n$ and $m$ are even integers, hence every corner square of $G$ is gray, and every remaining unoriented edge $e$ of $G$ divides a white square from the exterior face of $G$.   Thus, the edge $e$ lies in exactly one square of $G$, and has one parallel arc $a$ in the square.   Orient $e$ in the same direction as $a$.   All the edges of $G$ are now oriented; let $D$ be the resulting oriented graph.   Figure \ref{4*4-c=4} is an example of this orientation.   It is easy to verify that $D$ is strongly connected, and the vertices of every gray square conform a convex set of $D$.   If $C$ is a convex set of $D$ such that $|C| > 4$, then $C$ intersects the vertices on at least two different gray squares $S_1$ and $S_2$ in odd columns and rows.   Since $C$ induces a strong subdigraph of $D$, we may assume without loss of generality that $S_1$ and $S_2$ are gray squares in the same row and adjacent odd columns.   If $S_1$ and $S_2$ are in row $i$ and columns $j$ and $j+2$, then the vertices of $S_1$ and $S_2$, together with the vertices of the squares $S_3$ and $S_4$ in row $i+2$ and columns $j$ and $j+2$, induce a subdigraph of $D$ isomorphic to the digraph $H$ of Figure \ref{4*4-c=4}.   Therefore $\bigcup_{i=1}^4 V(S_i) \subseteq C$.   We can repeat this argument using squares $S_2$ and $S_4$ and the squares in column $j+4$ and rows $i$ and $i+2$.   Iterating this process we obtain $C = V(D)$.   Hence, $\con(D)=4$.

For the second case, assume that $n$ or $m$ is an odd integer.   By virtue of Lemma \ref{n*3-c=4}, we assume that $n, m \ge 4$.   Observe that there are exactly two white corner squares in $G$.   Except for the edges in the white corner squares, orient the remaining edges of $G$ as in the previous case.   For each of the white corner squares we have the two cases depicted in Figure \ref{n*m-c=4Fig} (the squares we are interested in are the bottom right corners), and two isomorphic cases obtained by reversing all the arcs of the previous ones.   We will consider two cases.

\begin{figure}
\begin{center}
\includegraphics{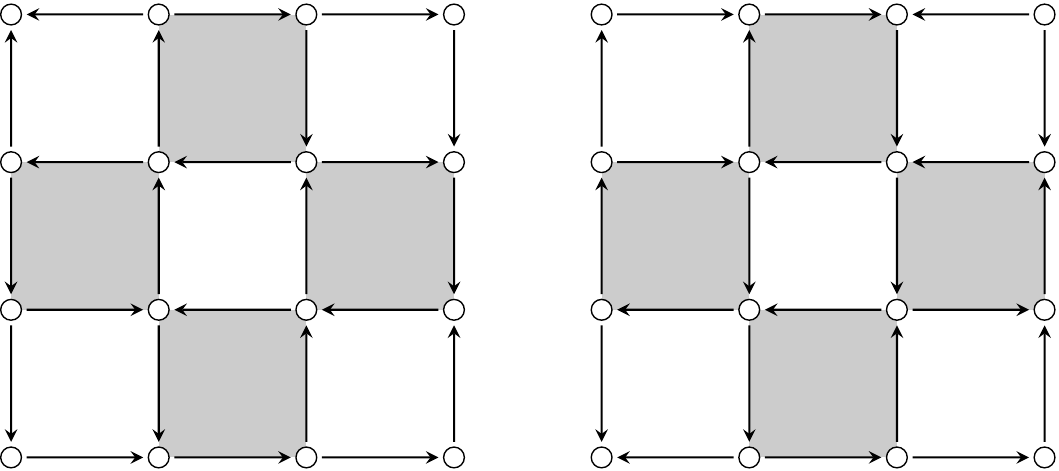}
\caption{The two non-isomorphic cases for the white corners in the proof of Lemma \ref{n*m-c=4}.} \label{n*m-c=4Fig}
\end{center}
\end{figure}

If $n \ne 4 \ne m$, complete the orientation $D$ of $G$ as in Figure \ref{n*m-c=4Fig}.   Again, it is direct to verify that $D$ is strong and the vertices of every gray square conform a convex set of $D$.   Let $C$ be a convex set of $D$ such that $|C| > 4$.   As in the previous case, it suffices to show that there are two consecutive gray squares in the same row or the same column that intersect $C$.   Since $|C| > 4$, there $C$ intersects at least two different gray squares.   Hence, the desired condition is held, unless the gray squares are precisely those adjacent to one of the white corners.   We will assume without loss of generality that one white corner square is the one in the bottom right, as in Figure \ref{n*m-c=4Fig}.  Let $S_1$ and $S_2$ be gray squares to the left and above the white corner, respectively.   Let  $v_i$ be the vertex in the upper left corner of $S_i$, $i \in \{ 1, 2 \}$.

Consider first the situation depicted by the digraph on the left in Figure \ref{n*m-c=4Fig}.   It is clear that $d(v_1, v_2) = 4$, and also that, starting from $v_1$, the sequence $(u,u,r,d)$ determines a $v_1 v_2$-directed path of length $4$.   Hence, $C$ intersects two consecutive gray squares in the same row.

For the situation depicted by the digraph on the right in Figure \ref{n*m-c=4Fig}, it is clear that $d(v_1, v_2)=6$.   It is also clear that, starting from $v_1$, the sequence $(l,u,r,u,r,d)$ determines a $v_1 v_2$-directed path of length $6$.   Since $C$ is convex, it intersects two consecutive gray squares in the same row.

As a final case, assume without loss of generality that $m=4$.   Since $n$ is an odd integer, the two white corners are those on the right side of $G$.   In the situation depicted by the digraph on the left in Figure \ref{n*m-c=4Fig}, use precisely that orientation and the same argument as in the previous case.   In the remaining case, use the orientation of Figure \ref{n*m-c=4Fig} for the bottom right corner, and orient the upper right corner also as a directed path of length $2$ (assume that it goes up and left).   This orientation is strong, the vertices of each gray square conform a convex set, and the same argument as the previous case shows that we can find two consecutive gray squares in the same column that intersect $C$, and hence $C = V(D)$.

\end{proof}

Our first modification to the previous orientation will be getting gray rectangles instead of $2 \times 2$ squares.

\begin{lemma} \label{n*m-c=ab}
Let $n,m \ge 4$ be integers and let $G= P_n \Box P_m$ be a grid.  If $a,b \ge 2$ is a pair of integers such that $a \le n-1$ and $b \le m-1$, then $ab \in S_{SG} (G)$.
\end{lemma}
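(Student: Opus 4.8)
**Proof proposal for Lemma \ref{n*m-c=ab}.**

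The plan is to adapt the checkerboard construction from Lemma \ref{n*m-c=4}, replacing the $2\times 2$ gray squares by gray $a\times b$ rectangles (meaning rectangles whose vertex sets have $a$ rows and $b$ columns), tiled so that consecutive gray rectangles share a single corner vertex exactly as the $2\times 2$ squares did. First I would place a gray $a\times b$ rectangle with its lower-left corner at $1_1$, and then tile the grid by translating this rectangle by $(a-1)$ in the vertical direction and $(b-1)$ in the horizontal direction, so that the gray rectangles meet only at corners and the complementary ``white'' rectangles are also $a\times b$ and meet the gray ones along full sides. Each gray rectangle is oriented as a whirlpool (as defined before Proposition \ref{lcon1grids}, a whirlpool on any rectangular subgrid), alternating whirlpool and anti-whirlpool in a checkerboard pattern, starting with a whirlpool in the bottom-left gray rectangle; this is legitimate because Proposition \ref{lcon1grids} applies to whirlpools on arbitrary connected subgraphs of the interior dual, so in particular to any $a\times b$ block. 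The remaining edges (those bordering white rectangles, which receive no orientation from the gray blocks, plus the edges touching the outer face) are oriented to agree with their parallel arc inside the adjacent gray rectangle, with the corner edge-pairs at white corners oriented as directed $2$-paths so that no white rectangle becomes a directed cycle — exactly as in Lemma \ref{n*m-c=4}, since the combinatorics of the border is unchanged.

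Next I would argue, as in the proofs of Proposition \ref{lcon1grids} and Lemma \ref{n*m-c=4}, that $D$ is strongly connected and that the vertex set of each gray rectangle is a convex set of $D$; the convexity of a single gray rectangle $R$ follows because every arc leaving $R$ or entering $R$ (there are few such arcs, determined by the whirlpool/anti-whirlpool orientation of the block and the orientation rule on its border) leads, via short directed paths, only back around the boundary of adjacent white blocks, never cutting through $R$, so no geodesic between two vertices of $R$ exits $R$. This gives $ab \in S_{SC}(G)$ provided $\con(D)=ab$; note $a\le n-1$ and $b\le m-1$ guarantee there is at least one more gray rectangle beyond the corner one, so a proper convex set of size $ab$ is not forced to be everything, and in fact we must show nothing bigger occurs.

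For the upper bound I would mimic the final paragraph of Lemma \ref{n*m-c=4}: let $C$ be a convex set with $|C| > ab$. Since $D[C]$ is strong (Lemma \ref{girthcon}) and every gray rectangle is convex, $C$ must contain two or more vertices from the union of the gray rectangles, and by strong-connectivity of $D[C]$ one shows $C$ meets two gray rectangles that are \emph{consecutive} in the tiling — i.e. sharing a corner vertex, or adjacent across one white block. When two consecutive gray rectangles $S_1,S_2$ are both met by $C$, the four gray rectangles $S_1,S_2,S_3,S_4$ forming a $2\times 2$ pattern of blocks induce (by the whirlpool/anti-whirlpool alternation and the border rule) a subdigraph that plays the role the digraph $H$ played in Lemma \ref{n*m-c=4}: there are short geodesics forcing all four rectangles into $C$, and iterating the block-propagation argument across the whole tiling yields $C=V(D)$, a contradiction. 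The white-corner cases are handled exactly as in Lemma \ref{n*m-c=4}, using the explicit directed paths of length $4$ or $6$ around the white corner to force adjacency of two gray rectangles inside $C$. The main obstacle is the bookkeeping in verifying that a gray $a\times b$ rectangle (rather than a $2\times 2$ square) is still convex once the border edges are oriented, and that the ``$H$-like'' $2\times 2$ block of gray rectangles still forces propagation for every combination of whirlpool/anti-whirlpool parities; but these are local checks completely parallel to Lemma \ref{n*m-c=4}, only with longer geodesics wrapping around larger rectangles, so no genuinely new idea is needed.
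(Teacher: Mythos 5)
Your tiling is not the one that makes this work, and the difference is not cosmetic. You tile the grid with gray $a\times b$ rectangles in a checkerboard so that the complementary white blocks are \emph{also} $a\times b$ rectangles sharing full sides with the gray ones. But then, for $a,b\ge 3$, each white block has $(a-2)(b-2)$ interior vertices belonging to no gray rectangle, and the edges incident with them are not parallel to any arc of a gray block. Your orientation rule (``orient each remaining edge to agree with its parallel arc inside the adjacent gray rectangle'') therefore does not even define an orientation of those edges, so the construction is incomplete; and the assertion that ``the combinatorics of the border is unchanged'' from Lemma \ref{n*m-c=4} is false --- in that lemma every edge outside a gray square is parallel to an arc of a gray square, which is exactly what fails once the white blocks are fat. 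Both halves of your argument then break down: convexity of a gray block $R$ is no longer a local check, because a geodesic between two distant vertices of the $a\times b$ whirlpool (internal distances in a whirlpool can be large) could shortcut through the adjacent white block, and the propagation step requires explicit geodesics through territory you never oriented. You also need strong connectivity of $D$, which again depends on how those orphan vertices are wired.

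The paper avoids all of this by \emph{not} using a uniform tiling: it alternates the large $(a-1)\times(b-1)$-unit-square gray rectangles (the $ab$-vertex convex sets) with small $2\times 2$-vertex gray connector squares placed at the shared corners, so that the white regions are strips only one unit square wide. Then every vertex of $G$ lies on the boundary of some gray region, every non-gray edge has a parallel gray arc (so the orientation rule of Lemma \ref{n*m-c=4} really does extend), and the configuration of one connector square together with its four neighbouring gray rectangles is isomorphic to the digraph $H$ of Figure \ref{4*4-c=4}, so the propagation argument transfers essentially verbatim (with separate care for the cases $a=n-1$ or $b=m-1$). To repair your write-up, replace the white $a\times b$ blocks by width-one white strips plus $2\times 2$ gray connectors; as it stands the proof has a genuine gap.
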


\begin{proof}
The idea of this proof is to generalize the orientation used in Lemma \ref{n*m-c=4}, but using grids of size $ab$ instead of squares in the odd-numbered rows and columns.   An example of this orientation is depicted in Figure \ref{n*m-c=abFig}.

First, suppose that $a < n-1$ and $b < m-1$.   Enumerate the rows and columns of squares of $G$ from down to up and from left to right, respectively.   Color with gray and white the squares of $G$ in a checkerboard-like pattern, but considering rectangles of squares instead of single squares, in the following way.   Color the squares in the first $a-1$ columns and $b-1$ rows, and the square in the $a$-th column and $b$-th row wih gray.   Color the squares in the $a$-th column and the first $b-1$ rows, and the squares in the $b$-th column and the first $a-1$ rows in white.   Color the rest of $G$ with a tiling of this coloring.

Enumerate the rows and columns of gray rectangles in the pattern from left to right and from down to up.   Clearly, the gray rectangles in the even numbered rows (columns) are squares.     We can assume that $ab > 4$, hence, the gray rectangles in the odd numbered rows (columns) are proper rectangles.

Orient the gray squares in the even rows as in the proof of Lemma \ref{n*m-c=4} (in particular, the first gray square of the second row is an anti-whirlpool).   Orient the gray rectangles in the odd rows as whirlpools or anti-whirpools in such way that every gray square (possibly except the last gray square in every row or every column) in a even row, together with the four gray squares sharing a vertex with it, induce a digraph isomorphic to $H$ (Figure \ref{4*4-c=4}).   Of course, to achieve this end, we also need to orient four additional arcs dividing either two white squares of $G$, or a white square and the exterior face of $G$.   Now, every unoriented edge of $G$ divides two white squares, or a white square from the exterior face in an even row or column of gray squares.   Orient every edge dividing two white squares in the same direction as the closest arc in a gray square in the same row or column.   There are unoriented edges parallel to the arcs oriented in the previous step; orient those edges in the same direction as the arcs they are parallel to.   The remaining unoriented edges form paths on the exterior face of $G$ joining pairs of gray rectangles.    Orient those paths as directed paths in such way that, if any, the corner vertices of $G$ in a white square have in-degree and out-degree equal to one.   Figure \ref{n*m-c=abFig} shows an example of this orientation.

\begin{figure}
\begin{center}
\includegraphics{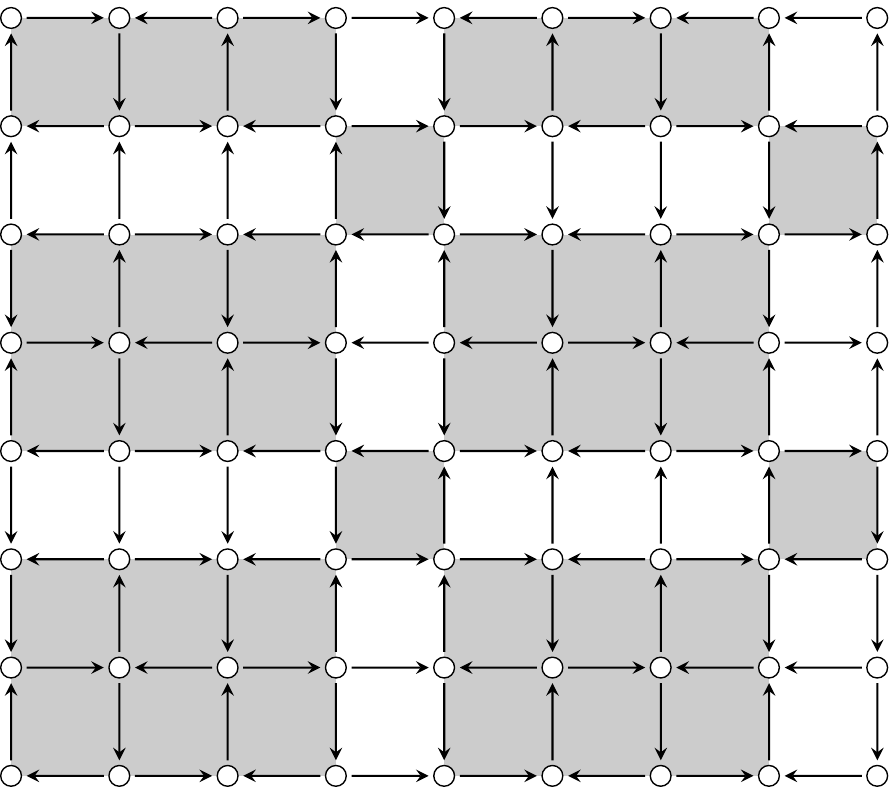}
\caption{The orientation described in the proof of Lemma \ref{n*m-c=ab} for $n=9$, $m=8$, $a=4$ and $b=3$.} \label{n*m-c=abFig}
\end{center}
\end{figure}

If $D$ is the digraph obtained from $G$ by means of the previously described orientation, then, by mimicking the arguments in the proof of Lemma \ref{n*m-c=4} we reach the desired conclusion.

\begin{figure}
\begin{center}
\includegraphics[width=\textwidth]{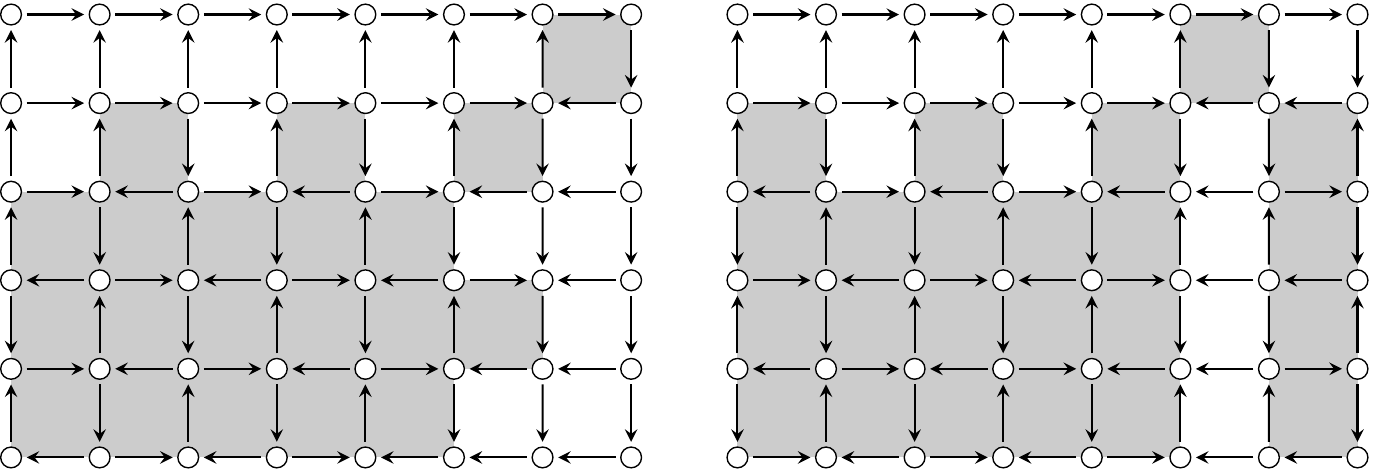}
\caption{The orientation used in the proof of Lemma \ref{n*m-c=ab} for $n=8$, $m=6$, $b=5$ and $a \in \{ 5,6 \}$. } \label{n*m-c=abFig2}
\end{center}
\end{figure}

If $a=n-1$ or $b=m-1$, we have to be careful with the white corners, but the simple modification shown in Figure \ref{n*m-c=abFig2} suffices to use the same argument as in the previous case.

\end{proof}

In the next lemma we use a simpler orientation, which is depicted in Figure \ref{n*m-c=nbFig}.   This orientation resembles the one used in Lemma \ref{2*n-part1}.

\begin{figure}
\begin{center}
\includegraphics[width=\textwidth]{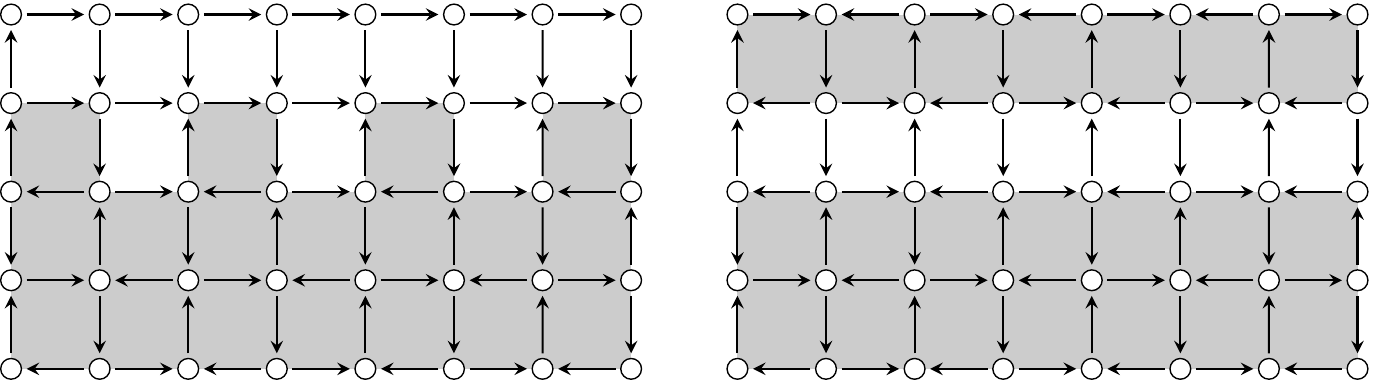}
\caption{The orientations used in the proof of Lemma \ref{n*m-c=nb} for $b=m-1$ (left) and $b<m-1$ (right).} \label{n*m-c=nbFig}
\end{center}
\end{figure}

\begin{lemma} \label{n*m-c=nb}
Let $n,m \ge 4$ be integers and let $G= P_n \Box P_m$ be a grid.  If $b$ is an integer such that $\frac{m}{2} \le b \le m-1$, then $nb \in S_{SG} (G)$.
\end{lemma}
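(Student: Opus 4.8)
**Proof proposal for Lemma \ref{n*m-c=nb}.**

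The plan is to construct, for each $b$ with $\frac{m}{2} \le b \le m-1$, a strong orientation $D$ of $G = P_n \Box P_m$ whose maximum convex set has exactly $nb$ vertices, namely the vertex set of a sub-grid $P_n \Box P_b$. The construction imitates the proof of Lemma \ref{2*n-part1}: split $G$ along a horizontal cut into $G_1 = P_n \Box P_b$ (the bottom $b$ columns of the $P_m$ factor) and $G_2 = G - G_1$, orient $G_1$ as a whirlpool, and orient $G_2$ as a whirlpool or anti-whirlpool chosen so that the arcs joining $G_1^{\,b}$ to $G_2^{\,b+1}$ become "parallel" (all pointing the same way across the cut except for one return arc). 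This is exactly the phenomenon exploited in Lemma \ref{2*n-part1}, and Figure \ref{n*m-c=nbFig} records the two cases $b = m-1$ and $b < m-1$.

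The key steps, in order, are as follows. First I would fix the orientations of $G_1$ and $G_2$ as whirlpools/anti-whirlpools (the parity of $b$ and of the number of columns in $G_2$ dictates which); by Proposition \ref{lcon1grids} each of $D[G_1]$ and $D[G_2]$ has convexity number $1$, so any convex set meeting two adjacent vertices inside $G_i$ must swallow all of $V(G_i)$. Second, I would orient the $n$ vertical edges of the cut (between fibers $G^b$ and $G^{b+1}$) so that exactly one of them is a "return" arc from $G^{b+1}$ to $G^b$ and the remaining $n-1$ point from $G^b$ to $G^{b+1}$ (when $b = m-1$ a symmetric trick near the single far column is needed, as in Figure \ref{n*m-c=nbFig}, left; when $b < m-1$ one uses a short detour through $G^{b+1}$ as on the right of the figure). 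Third, I would check $D$ is strong: strong connectivity inside each $G_i$ is immediate from the whirlpool property, and the cut arcs plus the return arc glue the two strong pieces together. Fourth, I would verify that $V(G_1)$ is convex: any two vertices of $V(G_1)$ have all their geodesics inside $V(G_1)$ because leaving $V(G_1)$ forces passing through the cut, and the only way back is via the unique return arc, which is placed so that its endpoints are at distance $1$ in $D$ — hence no geodesic uses it. Finally, I would show $\con(D) = nb$: any convex set $C$ with $|C| > nb$ must meet $V(G_2)$, hence (being strong on at least two vertices once it has more than $nb$ of the $n(m-b) \le nb$ available outside) meet two adjacent vertices of $G_2$, forcing $V(G_2) \subseteq C$; then the geodesics across the cut (explicitly exhibited, as in Lemma \ref{2*n-part1}) force the cut vertices of both fibers into $C$, and since $D[G_1]$ is a whirlpool, $V(G_1) \subseteq C$, whence $C = V(D)$, contradicting properness.

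The main obstacle I expect is the boundary case $b = m-1$, where $G_2$ is a single $P_n$-fiber with no $2 \times 2$ squares of its own, so the whirlpool machinery does not apply to $G_2$ directly; here one has to argue convexity of $V(G_1)$ and the forcing of $C = V(D)$ by hand, controlling the few edges incident with the last column (this is precisely why Figure \ref{n*m-c=nbFig} singles out that case). A secondary nuisance is bookkeeping the parity: the whirlpool/anti-whirlpool choice for $G_2$ and the exact placement of the return arc depend on $b \bmod 2$ and on $(m-b) \bmod 2$, so the proof will, as in Lemma \ref{2*n-part1}, treat one parity in detail and dismiss the other as "analogous." The lower bound $\frac{m}{2} \le b$ enters exactly at the last step: it guarantees $n(m-b) \le nb$, so that a convex set larger than $nb$ cannot fit outside $V(G_1)$ and must therefore intersect it, triggering the whirlpool collapse.
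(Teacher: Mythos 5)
Your overall plan (split $G$ into $G_1=P_n\Box P_b$ and $G_2=G-G_1$, orient both as whirlpools/anti-whirlpools, and let the counting via $b\ge m/2$ finish the job) is the paper's plan, but the step that actually carries the lemma --- showing $V(G_1)$ is convex --- is not established by what you wrote. Your argument is lifted from Lemma \ref{2*n-part1}: ``the only way back is via the unique return arc, whose endpoints are at distance $1$.'' That works when the cut has width $2$, because the exit vertex $j_1$ and the re-entry vertex $j_2$ of $G_1$ are adjacent inside $G_1$, so any detour of length $\ge 3$ through $G_2$ is beaten. Across a cut of width $n\ge 4$ with a single return arc, a path may exit $G^b$ at column $i$ and re-enter at column $1$; the two relevant vertices of $G^b$ can be far apart in $D[G_1]$ (in a whirlpool, adjacent vertices are already at distance $1$ or $3$), so nothing you have said rules out a geodesic between two vertices of $G_1$ that detours through $G_2$. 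The paper's proof replaces this with a reflection argument: $G_2$ is oriented so that the fibers $P_n^b$ and $P_n^{b+1}$ are identically oriented, making $D[G_2]$ the mirror image of the top $m-b$ fibers of $D[G_1]$; any $i_bj_b$-path whose interior lies in $G_2$, coded as $(u,x_2,\dots,x_{k-1},d)$, reflects to a path $(y_2,\dots,y_{k-1})$ inside $G_1$ that is shorter by $2$. (Note that $b\ge m/2$ is needed here too, so that the reflection lands inside $G_1$ --- not only in the final counting step, which is the only place you invoke it.) With that argument in hand, the cut edges can in fact be oriented arbitrarily subject to strong connectivity; your ``$n-1$ arcs out, one return arc'' choice is unnecessary and, more importantly, does not by itself prove convexity.

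The second gap is the case $b=m-1$, which you correctly flag as the hard boundary case but then defer to ``arguing by hand.'' The paper does not keep $G_1$ as a pure whirlpool there: it re-orients the top fiber $P_n^{m-1}$ of $G_1$ as a directed path, orients the extra fiber $P_n^m$ as a parallel directed path, and orients all cut edges downward except one. Without some such modification your construction is incomplete for that case, so as written the proposal proves the lemma only for $b<m-1$, and even there only modulo the missing convexity argument above.
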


\begin{proof}
First, suppose that $b = m-1$.  Orient $P_n \Box P_b$ as a whirlpool.   Now, re-orient the corresponding arcs in the fiber $P_n^{m-1}$ to obtain a directed path in the same direction (left or right) as the arc between $1_{m-1}$ and $2_{m-1}$.   Also, orient the fiber $P_n^m$ as a directed path in the same direction as the arc between $1_{m-1}$ and $2_{m-1}$.   Assume without loss of generality that $P_n^m$ is oriented right.   Finally orient down every edge in $\partial (P_n^m)$ except for $(1_{m-1}, 1_m)$, which is oriented up.   If $D$ is the resulting oriented graph, then it is easy to verify that $V(P_n \Box P_b)$ is a convex set of $D$, and $\con (D) = nb$.

If $b < m-1$, then orient $G_1 = P_n \Box P_b$ as a whirlpool and $G_2 = G - G_1$ as a whirlpool or as an anti-whirlpool in such way that the fibers $P_n^b$ and $P_n^{b+1}$ are isomorphic.   Orient the remaining edges arbitrarily, as long as there is one arc going up and one arc going down, to obtain $D$.    We will show that $V(G_1)$ is a convex set of $D$.

Let $P$ be a $i_b j_b$-path in $D$ such that every intermediate vertex of $P$ belongs to $V(G_2)$.   Then $P$ can be codified as $(u,x_2, \dots, x_{k-1}, d)$.   Let $(y_2, \dots, y_{k-1})$ be the sequence such that $$y_i = \left\{ \begin{array}{ll} x_i & \textnormal{if } x_i \in \{ l, r \} \\  u & \textnormal{if } x_i = d \\ d & \textnormal{if } x_i = u. \end{array} \right.$$

From the way we oriented $G_1$ and $G_2$ we conclude that $P'=(y_2, \dots, y_{k-1})$ defines a $i_b j_b$-path in $D$, strictly shorter than $P$ and such that $V(P') \subseteq V(G_1)$.   Hence, $V(G_1)$ is a convex set of $D$.

Recalling that $\frac{m}{2} \le b$, and using the fact that every convex set with more than $nb$ vertices has at least two vertices in $V(G_1)$ and at least two vertices in $V(G_2)$, we conclude that $\con (D) = nb$.

\end{proof}

The following three lemmas deal with the most complex cases.

\begin{lemma} \label{n*m-c=ab-k-l}
Let $n,m \ge 4$ be integers and let $G= P_n \Box P_m$ be a grid.  If $a,b,k,l$ are integers such that $0 \le k \le a-2 \le n-3$,   $0 \le l \le b-2 \le m-3$, and $a,b \ge 3$, then $ab-(k+l) \in S_{SG} (G)$.
\end{lemma}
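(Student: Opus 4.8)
The plan is to generalize the orientation from Lemma~\ref{n*m-c=ab} so that, instead of a single gray rectangle of dimensions $a \times b$ realizing a maximum convex set of size $ab$, we ``clip'' $k$ vertices from one side of that rectangle and $l$ vertices from an adjacent side, obtaining a maximum convex set of size $ab-(k+l)$. Concretely, I would start from the checkerboard-of-rectangles orientation of Lemma~\ref{n*m-c=ab} tuned to parameters $a$ and $b$ (which is where the hypotheses $a \le n-1$, $b \le m-1$, together with $k \le a-2$, $l \le b-2$, i.e. the clipped region still has at least two rows and two columns of vertices, come from), and then modify the distinguished gray rectangle $R$ in the lower-left corner. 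Instead of taking all of $R = P_a \Box P_b$, take the induced subgraph $R' = (P_a \Box P_b) - X$, where $X$ consists of the $k$ right-most vertices of the bottom row and the $l$ top-most vertices of the left column (an ``L-shaped'' excision from a corner, exactly as $G_1$ was built as a whirlpool on a non-rectangular region in Lemmas~\ref{3*n-c=3k+2}, \ref{3*n-c=3k+1}). Orient $R'$ as a whirlpool; since every interior $4$-cycle of the grid restricted to $R'$ is still an oriented $4$-cycle, Proposition~\ref{lcon1grids}'s local argument still applies and any convex set meeting $R'$ in two adjacent vertices swallows all of $V(R')$.

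The key steps, in order, would be: (1) describe the orientation precisely — whirlpool on $R'$, the surrounding white/gray rectangle pattern from Lemma~\ref{n*m-c=ab} for the rest of $G$, with the handful of boundary arcs incident to the excised cells $X$ oriented so that $D$ is strong and so that $\partial^+(V(R'))$ and $\partial^-(V(R'))$ are each very small (ideally a single arc, as engineered in Lemmas~\ref{2*n-part1} and \ref{3*n-c=3k}); (2) verify $D$ is strongly connected, which is routine once the excised boundary is handled as in Figure~\ref{n*m-c=abFig2}; (3) check that $V(R')$ is convex: by Observation~\ref{ai-ao} and the smallness of $\partial^{\pm}(V(R'))$, any geodesic entering and leaving $R'$ can be shortcut inside $R'$, so no external vertex is forced in — this is the same ``reflection'' trick used in Lemma~\ref{n*m-c=nb}; (4) show maximality: if $C$ is convex with $|C| > ab-(k+l)$, then $C$ meets some gray rectangle other than $R'$ in at least two adjacent vertices (a counting argument using that the gray rectangles are pairwise vertex-disjoint except at the $H$-gadget corners), and then the isomorphic-copies-of-$H$ propagation from Lemma~\ref{n*m-c=ab} forces $C$ to absorb rectangle after rectangle — including, via a short explicit geodesic from $R'$ into a neighboring square, two adjacent vertices of $R'$, whence $V(R') \subseteq C$ and finally $C = V(D)$; (5) conclude $\con(D) = ab-(k+l)$.

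The main obstacle I expect is step~(3)–(4) at the seam where the L-shaped clip meets the white border rectangles: I must orient the $O(k+l)$ edges along the two clipped sides of $R'$ so that simultaneously (a) no short geodesic sneaks from $V(R')$ out across the clipped corner and back, which would violate convexity of $V(R')$, and (b) from just outside, a geodesic *does* reach two adjacent vertices of $R'$ once $C$ already contains a neighboring gray rectangle, which is needed for maximality. These two requirements pull in opposite directions and the clipped corner is the delicate region; I anticipate handling it with a case analysis on the parities of $a,b,k,l$ and on whether $a=n-1$ or $b=m-1$, exactly in the spirit of the corner analyses in Lemmas~\ref{n*m-c=4} and~\ref{n*m-c=ab}, and illustrating the resulting orientation with a figure analogous to Figure~\ref{n*m-c=abFig}. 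The remaining bookkeeping — strong connectivity and the $H$-propagation — is a direct transcription of earlier arguments and should not pose real difficulty.
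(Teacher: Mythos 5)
Your proposal is correct and follows essentially the same route as the paper: both take the distinguished $a\times b$ gray rectangle from Lemma~\ref{n*m-c=ab}, excise an L-shaped set of $k+l$ vertices from two adjacent sides (keeping one corner of the rectangle intact so the interface with the surrounding pattern is undisturbed), orient the clipped region as a whirlpool, and reuse the $H$-propagation argument for maximality. The paper additionally reduces to the case $k+l<\min\{a,b\}$ by reparametrizing $ab-(k+l)=a(b-1)-(k'+l')$, a small normalization worth including, but otherwise your plan, including the special handling of $a=n-1$ or $b=m-1$, matches the published proof.
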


\begin{figure}
\begin{center}
\includegraphics[width=\textwidth]{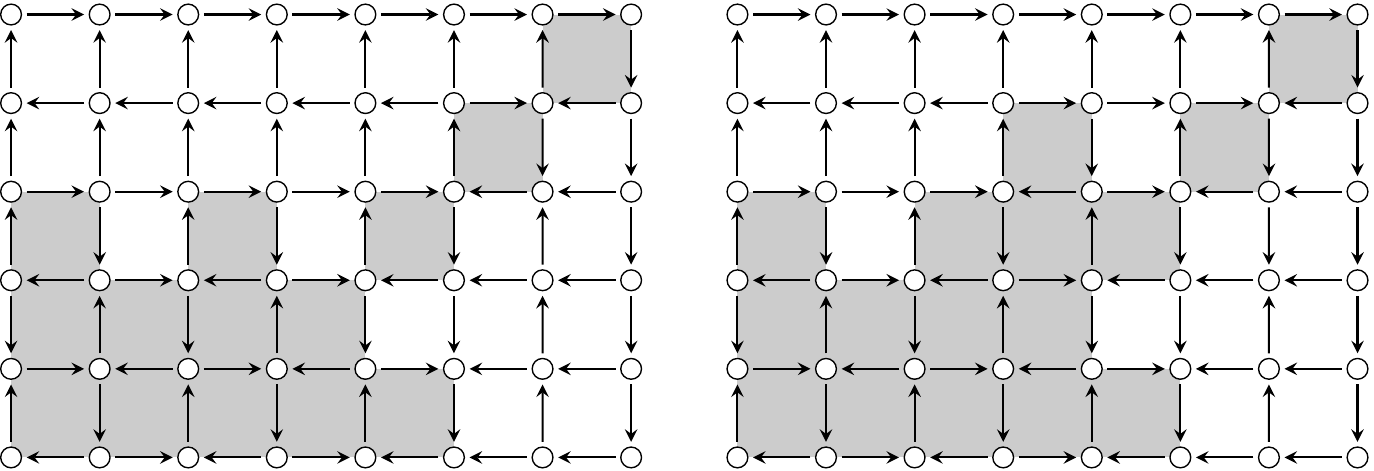}
\caption{The orientation used in the proof of Lemma \ref{n*m-c=ab-k-l} for $n=8$, $m=6$, $a=7$ $b=5$ and $(k,l) \in \{ (5,3), (3,2)  \}$.} \label{n*m-c=ab-k-lFig}
\end{center}
\end{figure}

\begin{proof}
Consider a coloring of the squares of $G$ similar to the coloring used in the proof of Lemma \ref{n*m-c=ab}, with the following differences.   In the lower left corner of the aforementioned orientation of $G$ we have a rectangle, $R_1$, of $(a-1)\times(b-1)$ gray squares.   Consider the subgraph $G_1$ of $G$ obtained by deleting the first $(a-1)$ columns and the first $(b-1)$ rows of vertices of $G$.   Color $G_1$ as in the proof of Lemma \ref{n*m-c=4}.   Finally, complete the checkerboard-like pattern with gray rectangles of size $1 \times (a-1)$ squares in the first row, and with gray rectangles of size $(b-1) \times 1$ squares in the first column.

\begin{figure}
\begin{center}
\includegraphics{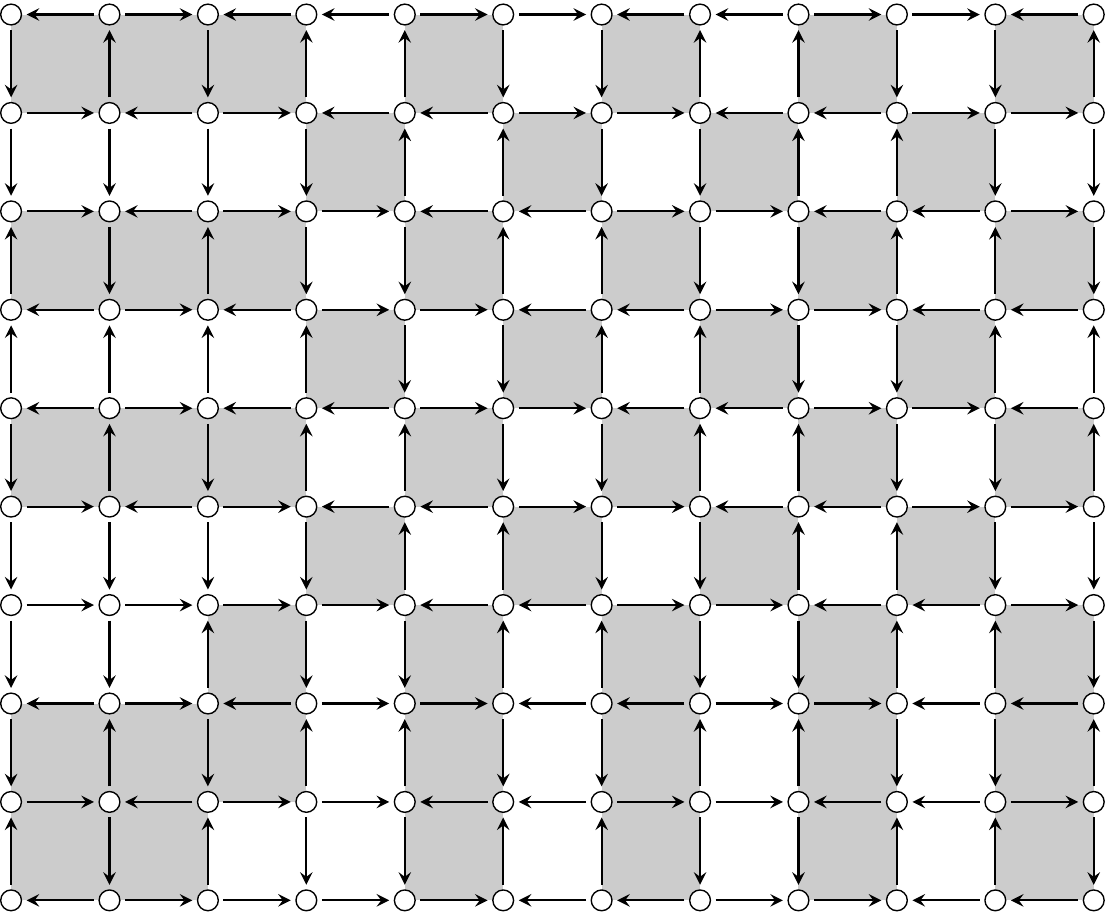}
\caption{The orientation described in the proof of Lemma \ref{n*m-c=ab-k-l} for $n=12$, $m=10$, $a=4=b$, $k=2$ and $l=1$.} \label{n*m-c=ab-k-lFig}
\end{center}
\end{figure}

We want our largest convex set to be a segment of $R_1$, the gray rectangle in the lower left corner.   Color white the squares in the first $k$ columns from the top row of $R_1$.   Also color white the squares in the first $l$ rows of the rightmost column of $R_1$ to obtain the gray region $R$.   Now, orient $G$ as in Lemma \ref{n*m-c=ab}, orienting $R$ as a whirlpool.    Orient all the remaining edges to the right and down to obtain $D$.   An example of this orientation is depicted in Figure \ref{n*m-c=ab-k-lFig}.

Now, observe that the top right corner of $R$ coincides with the top right corner of $R_1$.   Hence, this orientation has the same properties as the orientation of Lemma \ref{n*m-c=ab}.   We can assume without loss of generality that $k+l < \min \{ a, b\}$.   Otherwise, assuming that $b \le a$, we have that $ab-(k+l)= a(b-1)-(k'+l')$ for some pair of integerers $k', l'$ such that $0 \le k' \le k$, $0 \le l' \le l$.   Hence, $ab-(k+l) \ge 2a, 2b$.   From here, it is easy to observe that the only proper convex sets of $D$ are the vertices in each of the gray regions of $D$.   Since the largest one is $R$, which has $ab-(k+l)$ vertices, we conclude $S_{SC} (D) = ab-(k+l)$.

Again, we have to be careful when $a=n-1$ or $b=m-1$.   The corresponding modifications to the previous orientation are depicted in Figure \ref{n*m-c=ab-k-lFig}.

\end{proof}

\begin{lemma} \label{5*4-c=nm-k}
Let $4 \le n,m \le 5$ be integers and let  $G= P_n \Box P_m$ be a grid.  If $k$ is an integer such that $6 \le k \le 2\max \{ n, m \}-1$, then $nm-k \in S_{SG} (G)$.
\end{lemma}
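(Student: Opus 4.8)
Throughout I assume without loss of generality that $n\le m$, so $\max\{n,m\}=m$ and $k$ runs over $6\le k\le 2m-1$; the grids to consider are then $P_4\Box P_4$, $P_4\Box P_5\cong P_5\Box P_4$ and $P_5\Box P_5$. The first step is to dispose of the values that are already available. By Lemmas \ref{n*m-c=ab}, \ref{n*m-c=nb} and \ref{n*m-c=ab-k-l}, every integer of the form $ab$ with $2\le a\le n-1$ and $2\le b\le m-1$, every integer $nb$ with $\tfrac{m}{2}\le b\le m-1$, and every integer $ab-(s+t)$ with $3\le a,b$, $0\le s\le a-2\le n-3$ and $0\le t\le b-2\le m-3$, belongs to $S_{SC}(G)$ (and, since $P_4\Box P_5\cong P_5\Box P_4$, one may apply these in either orientation of that grid). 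A short computation shows that among the numbers $nm-k$ with $6\le k\le 2m-1$ the only ones not produced this way are $10$ when $G=P_4\Box P_4$ (here $k=6$), $13$ and $14$ when $G=P_4\Box P_5$ (here $k\in\{7,6\}$), and $17$, $18$, $19$ when $G=P_5\Box P_5$ (here $k\in\{8,7,6\}$). So it suffices to produce, for each of these six cases, a strong orientation $D$ of $G$ with $\con(D)=nm-k$.

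In every one of these cases the plan is to take the removed set $S:=V(G)\setminus C$ to be the connected ``L-strip'' consisting of all $m$ vertices of the bottom row of $G$ together with the first $k-m$ vertices of the row immediately above it; then $|S|=k$, while $C$ consists of the rows $1,\dots,n-2$ in full together with the last $2m-k$ vertices of row $n-1$. One checks that each such $C$ is a polyomino (a union of grid faces) with connected interior dual, so it can be oriented as a whirlpool; note $2m-k\ge 2$ in all six cases, which is what prevents pendant vertices in $C$. Orient $C$ as a whirlpool, then orient the arcs inside $S$ and the arcs between $S$ and $C$ ``by hand'', making most boundary arcs point from $C$ into $S$ and leaving just one or two pointing out of $S$, so that every vertex of the bottom row and the corner vertices of $G$ lying in $S$ acquire positive in- and out-degree; call the result $D$, with the concrete choices for the six cases displayed in an accompanying figure. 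Three things then have to be checked. (i) $D$ is strong: the whirlpool $C$ is strong by Proposition \ref{lcon1grids}, and the arcs chosen inside and around $S$ join every vertex of $S$ both to and from $C$. (ii) $C$ is convex: for $u,v\in C$ one inspects the finitely many (and short) ways in which a $uv$- or $vu$-geodesic could enter the region $S$, and checks in each case that a directed walk of the same length or shorter stays inside the whirlpool $C$; since the grids are small this is a bounded verification. (iii) $C$ is maximum: if $C'$ is convex with $|C'|>|C|=nm-k$ then $C'$ meets $S$, and, following the directed arcs we laid inside and around $S$, one is forced to put into $C'$ first two adjacent vertices of the whirlpool $C$ — hence $C\subseteq C'$, because $\con(D[C])=1$ — and then, vertex by vertex, the rest of $S$, so $C'=V(D)$, which is impossible for a proper convex set. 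Combining (i)--(iii) gives $\con(D)=nm-k$.

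The main obstacle, and the reason this small-grid lemma is isolated from the general construction of the later lemmas, is that steps (ii) and (iii) pull the orientation of $S$ in opposite directions: step (ii) wants every detour through $S$ to be long, while step (iii) wants any convex set touching $S$ to swallow a whole whirlpool arc and cascade to $V(D)$. Moreover the clean reflection argument used for $C$ in the proof of Lemma \ref{n*m-c=nb} does not transfer here, because the boundary between $C$ and $S$ is a ``step'' (it runs along row $n-2$ over the first $k-m$ columns and along row $n-1$ over the rest), so one cannot simply mirror the vertical moves of an offending path. Consequently the verification of (ii) and (iii) has to be done case by case on the small grids involved, guided by the figures; once the orientations are fixed, everything reduces to inspection, and Lemmas \ref{girthcon} and \ref{comcon} guarantee that, up to the symmetries of the grid, the L-strips above are essentially the only candidates for $V(D)\setminus C$, which is what keeps the analysis finite.
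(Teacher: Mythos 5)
Your reduction to the hard cases is correct and matches the paper exactly: after invoking Lemmas \ref{n*m-c=ab}, \ref{n*m-c=nb} and \ref{n*m-c=ab-k-l}, the only values left are $10$ for $P_4\Box P_4$, $13,14$ for $P_4\Box P_5$, and $17,18,19$ for $P_5\Box P_5$, and the paper likewise disposes of $k\in\{8,9\}$ (for $5\times4$), $k=7$ (for $4\times4$) and $k=9$ (for $5\times5$) via Lemma \ref{n*m-c=ab-k-l} before turning to explicit orientations for the rest. The problem is that for those six remaining cases you have not actually produced the orientations: you describe a target shape for the complement (an L-strip of $k$ vertices), announce that the arcs inside and around it will be chosen ``by hand'' as ``displayed in an accompanying figure,'' and then assert that strongness, convexity and maximality follow by inspection. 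Since the explicit arc choices \emph{are} the entire content of the lemma once the reduction is done (the paper's proof consists of exhibiting them in Figure \ref{5*4-c=nm-kFig}), deferring them to a figure that does not exist leaves the proof with no verifiable substance for these cases.

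The gap is not merely cosmetic, because your plan has a genuinely delicate point that you acknowledge but do not resolve: with $C$ oriented as a whirlpool and the boundary between $C$ and $S$ forming a step, the reflection argument of Lemma \ref{n*m-c=nb} is unavailable, so the convexity of $C$ depends sensitively on the hand-chosen arcs in and around $S$. Those choices are simultaneously constrained by Observation \ref{ai-ao} (every vertex of $S$ adjacent to $C$ must have all its arcs to $C$ oriented the same way), by strongness (every vertex of $S$ needs a way in and out), and by your step (iii) (any convex set meeting $S$ must be forced to swallow two adjacent vertices of $C$ and then all of $S$). It is not evident that all three can be satisfied at once for each of the six L-strips, and nothing in the proposal certifies that they can; indeed the paper's own extension argument in Lemma \ref{n*m-c=nm-k} suggests its complements are not all L-strips of this form. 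To complete the proof you must write down the six orientations explicitly (or at least one per case up to symmetry) and carry out checks (ii) and (iii) on them.
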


\begin{proof}
First, consider the case $n=5$ and $m=4$.   If $k \in \{ 8,9 \}$, then the result follows from Lemma \ref{n*m-c=ab-k-l}.   The orientations for $k \in \{ 6, 7 \}$ are shown in Figure \ref{5*4-c=nm-kFig}.   It can be easily verified that the set of white vertices is a largest convex set for this orientation.

Now suppose that $n=4=m$.   The case $k=7$ follows from Lemma \ref{n*m-c=ab-k-l}.   The orientation for $k=6$ is depicted in Figure \ref{5*4-c=nm-kFig}.   Again, it is not hard to verify that the sets of white vertices are largest convex sets for each of the orientations.

Finally, suppose that $n=5=m$.   If $k=9$, then the result follows from Lemma \ref{n*m-c=ab-k-l}.   The orientations for $k \in \{6, 7, 8\}$ are depicted in Figure \ref{5*4-c=nm-kFig}.   As in the previous cases, the sets of white vertices are largest convex sets for the corresponding orientation.

\begin{figure}
\begin{center}
\includegraphics[width=\textwidth]{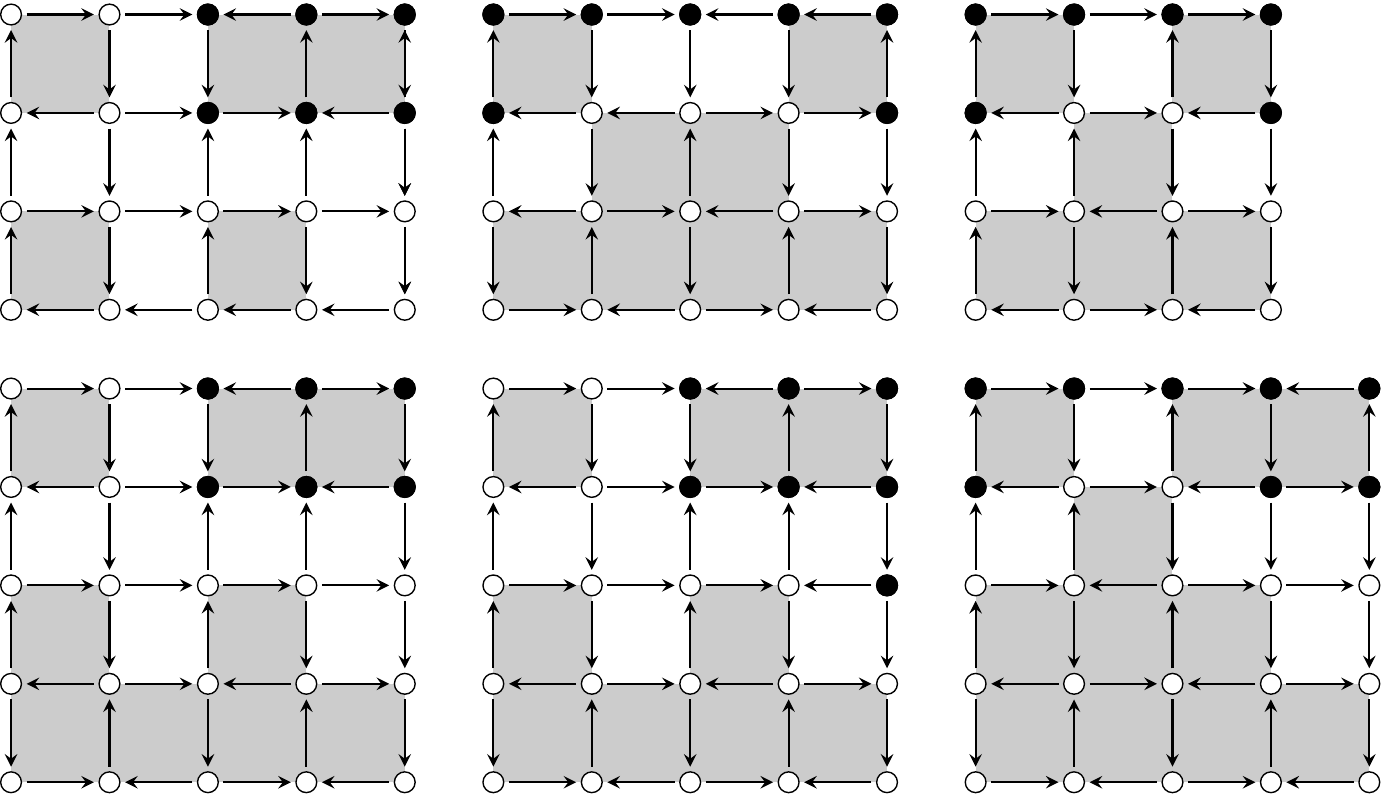}
\caption{The orientations used in the proof of Lemma \ref{5*4-c=nm-k}.} \label{5*4-c=nm-kFig}
\end{center}
\end{figure}

\end{proof}

\begin{lemma} \label{n*m-c=nm-k}
Let $n,m \ge 4$ be integers and let $G= P_n \Box P_m$ be a grid.  If $k$ is an integer such that $6 \le k \le 2\max \{ n, m \}-1$, then $nm-k \in S_{SG} (G)$.   If $m = 4$, then also $nm-5 \in S_{SC} (G)$.
\end{lemma}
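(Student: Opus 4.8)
The plan is to realise each value $nm-k$ as $\con(D)$ for a strong orientation $D$ of $G$ whose unique maximum proper convex set is a large whirlpool region, obtained from $G$ by deleting a connected ``staircase'' region of $k$ vertices pushed into a corner. Assume without loss of generality that $n\ge m$, so $\max\{n,m\}=n$ and the admissible values are $nm-k$ with $6\le k\le 2n-1$, together with $nm-5$ when $m=4$. For $n,m\le 5$ (that is, $n,m\in\{4,5\}$) every value with $k\ge 6$ is already supplied by Lemma~\ref{5*4-c=nm-k}, and the single remaining value $nm-5$ with $m=4$ -- namely $11\in S_{SC}(P_4\Box P_4)$ and $15\in S_{SC}(P_5\Box P_4)$ -- is realised by two explicit orientations built exactly in the style of Lemma~\ref{5*4-c=nm-k}. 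So from now on I would assume $n\ge 6$.

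The construction. By Lemmas~\ref{comcon} and~\ref{girthcon}, if $C$ is a maximum proper convex set of a strong orientation of $G$ then $S:=V(G)\setminus C$ is connected and $C$ induces a subgraph of minimum degree at least $2$; these are the constraints to respect when choosing $S$. For $6\le k\le 2n-3$ I would take $S$ inside the last two columns, consisting of the whole last column together with an initial segment of the second-to-last column (and, when $k$ is odd, one extra vertex of the third-to-last column, so that $C$ still has minimum degree $2$); for $k\in\{2n-2,2n-1\}$ one takes the two extremal members of this family. A short case check shows such an $S$ exists for every admissible $k$ once $n,m\ge 4$, and then $C=V(G)\setminus S$ is a subgrid with a short appendage attached along at least two edges, hence $2$-connected. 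Now orient $C$ as a whirlpool (the faces of $G$ lying inside $C$ form a connected subgraph of the interior dual), orient $S$ so that $D[S]$ has convexity $1$ -- essentially a whirlpool on the strip of faces inside $S$, with a small local modification absorbing the extra vertex when $k$ is odd -- and orient the $C$--$S$ interface, exactly as in Lemmas~\ref{n*m-c=nb} and~\ref{n*m-c=ab-k-l}, so that precisely one interface arc points into $C$, placed at the far tip of the staircase. Call the result $D$. One checks that $D$ is strong (the whirlpool pieces are strong and the lone feedback arc links them) and that no $uv$-geodesic with $u,v\in C$ leaves $C$, since any detour through $S$ is forced back into $C$ through the single interface arc and is therefore no shorter than a path staying inside the whirlpool $C$; hence $C$ is convex, of cardinality $nm-k$.

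It remains to see that $\con(D)=nm-k$. Let $C'$ be a proper convex set with $|C'|>nm-k$. Since $|C'|\ge nm-k+1$ and $|V(G)|=nm$ while $|S|=k\le 2n-1< nm/2$, inclusion--exclusion gives $|C'\cap C|\ge nm-2k+1\ge 3$; as $C$ is convex in $D$ and carries a whirlpool orientation, Proposition~\ref{lcon1grids} (a whirlpool has convexity $1$) forces $C\subseteq C'$. Then $C'$ meets $S$, and following a geodesic from a vertex of $C$ to that vertex of $S$ inside $D[S]$ drags in a second vertex of $S$, so Proposition~\ref{lcon1grids} applied to $D[S]$ forces $S\subseteq C'$ and hence $C'=V(D)$, contradicting properness; thus $\con(D)=nm-k$. (Alternatively one may organise everything as an induction on $n+m$: when $k\le 2n-3$, apply the statement to $P_{n-1}\Box P_m$ and pad with a top row oriented as in Lemma~\ref{n*m-c=nb}, so that only $k\in\{2n-2,2n-1\}$ need the direct construction.) I expect the main obstacle to be precisely the calibration of the $C$--$S$ interface -- and, for odd $k$, of the single vertex borrowed from the third-to-last column -- since it must be arranged so that simultaneously $D$ is strong, $S$ never lies on a geodesic joining two vertices of $C$, and meeting both $C$ and $S$ propagates to all of $G$; this is the same boundary bookkeeping already carried out in Lemmas~\ref{n*m-c=nb}, \ref{n*m-c=ab-k-l} and~\ref{5*4-c=nm-k}, now to be done uniformly in $n$ and $m$ and for the extremal values of $k$.
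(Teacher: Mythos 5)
Your strategy --- delete a connected ``staircase'' $S$ of $k$ vertices from a corner, orient the complement $C$ as a whirlpool, and route all re-entry into $C$ through a single feedback arc --- is genuinely different from the paper's proof, which instead grows the explicit $4\times 4$, $5\times 4$ and $5\times 5$ orientations of Lemma~\ref{5*4-c=nm-k} by appending rows and columns of whirlpools (re-orienting a few boundary arcs at each step) and handles $k\ge n+m-1$ via Lemma~\ref{n*m-c=ab-k-l}. But the single-feedback-arc design cannot make $C$ convex as stated. Let $(s',v)$ be the unique arc from $S$ into $C$. If some $u\in C$ carries an arc $(u,s')$ into $S$, then $(u,s',v)$ is a directed $uv$-path of length $2$; since $u$ and $v$ are two distinct grid-neighbours of $s'$ they are non-adjacent, so $d_D(u,v)=2$ and $s'$ lies on a $uv$-geodesic, contradicting convexity of $C$. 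Since you allow only one interface arc into $C$, every other edge between $s'$ and $C$ points into $S$, so the only escape is to place $s'$ at a vertex of $S$ with a \emph{unique} neighbour in $C$ --- which the ``far tip of the staircase'' is not --- and even then one must check that every longer detour $u\to s''\to\cdots\to s'\to v$ exceeds $d_{D[C]}(u,v)$. The mirroring argument that does this job in Lemma~\ref{n*m-c=nb} is unavailable because your $S$ is not a union of full fibers parallel to $C$; this boundary calibration is the real content of the lemma and is not supplied. (Also note that for $6\le k<m$ the set ``whole last column plus a segment of the next'' has more than $k$ vertices, so the family of staircases needs to be re-specified before the ``short case check'' can even begin.)

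The maximality argument has a second gap. Your staircase necessarily contains a dangling induced path: the vertices of the last column sitting beside vertices of $C$ in the second-to-last column lie in no unit square of $S$, so $D[S]$ is not a whirlpool in the paper's sense, need not be strong, and Proposition~\ref{lcon1grids} cannot be applied to it. Hence ``$C'$ meets $S$ in two vertices, therefore $S\subseteq C'$'' does not follow, and the propagation $C'\supsetneq C\Rightarrow C'=V(D)$ must be established by explicit geodesic chasing --- which is precisely what the paper's figures encode. The extreme values are also delicate: for $k=2n-1$ a staircase confined to the last two columns leaves a single vertex of those columns in $C$ with at most one neighbour in $C$, so $D[C]$ could not be strong; your borrowed third-column vertex repairs the degree count, but then $C$ is no longer obviously a union of faces and its whirlpool orientation has to be re-justified. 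Your parenthetical fallback (induct on $n+m$, padding with whirlpool rows as in Lemma~\ref{n*m-c=nb}) is essentially the paper's route and is the more viable one, but even there each extension forces a few interface arcs to be re-oriented and the convexity re-verified, so it is not a one-line reduction.
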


\begin{proof}
If $n,m \le 5$, then the result follows from the previous lemma.   Hence, we will suppose without loss of generality that $m \le n$ and $6 \le n$.   Let us consider first that $m \ge 5$.

Observe the orientations of the $5 \times 4$ and $5 \times 5$ grids in Figure \ref{5*4-c=nm-kFig} for $k=6$.   Clearly, the orientation of the latter can be obtained from the orientation of the former by adding an additional row of squares at the bottom of the grid, and orienting this new row as a whirlpool.   Naturally, three arcs of the former orientation should change its direction, in this case, these are the arcs $(1_2, 1_3), (3_1,2_1)$ and $(5_1,4_1)$ of the former oriented graph.   We have a similar situation with the orientation of the $4 \times 4$ and $5 \times 5$ grids for $k=6$ and $k=8$, respectively.   Again, the latter can be obtained from the former by adding one row and one column of squares, orienting the new row as an anti-whirlpool and the new column following certain pattern.   The idea of this proof is to observe that the three orientations of the $5 \times 5$ grids, shown in Figure \ref{5*4-c=nm-kFig}, can be naturally extended to obtain the desired orientations.

\begin{figure}
\begin{center}
\includegraphics[width=\textwidth]{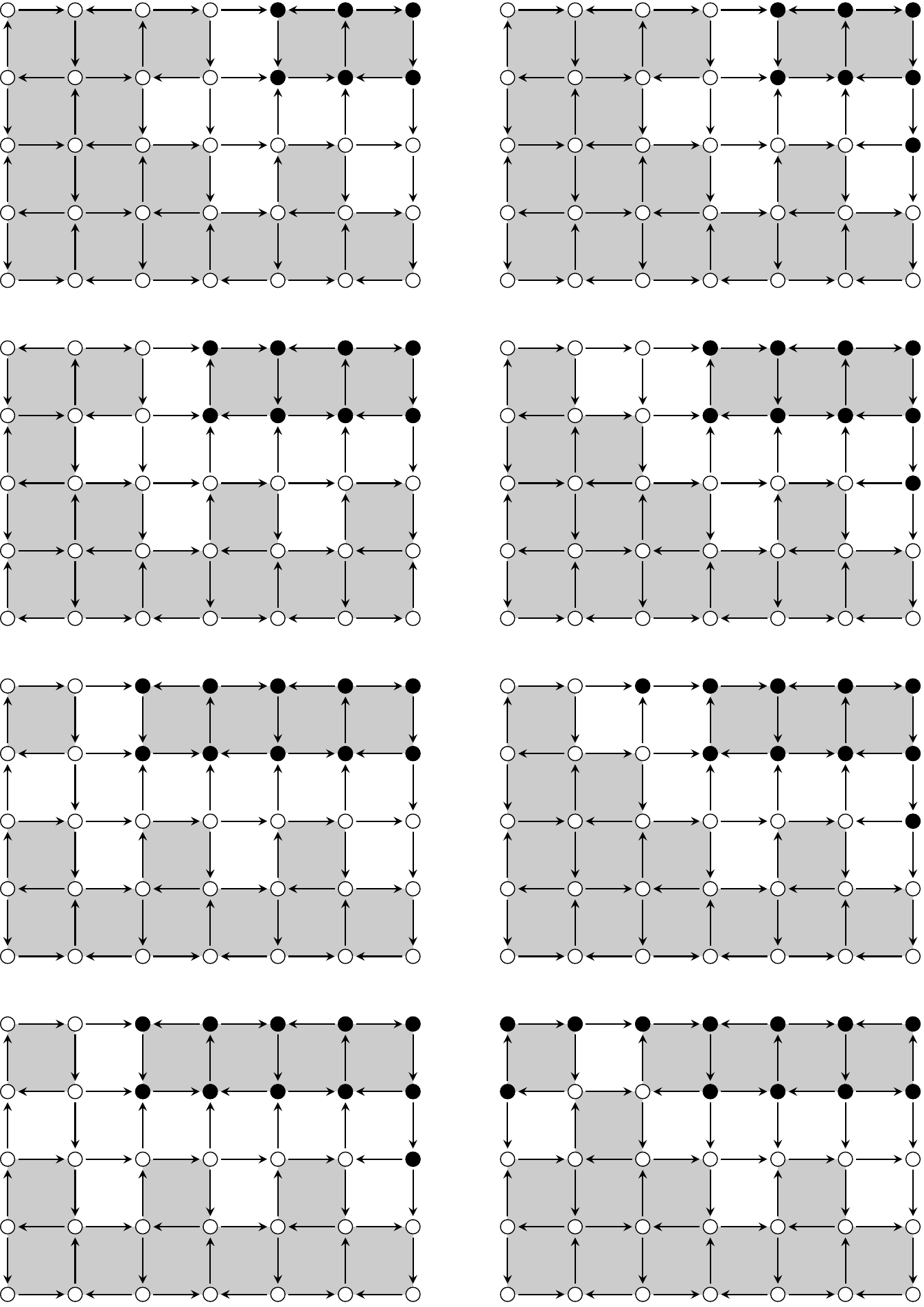}
\caption{The orientations used in the proof of Lemma \ref{n*m-c=nm-k} for $n=7$ and $m=5$.} \label{n*m-c=nm-kFig}
\end{center}
\end{figure}

First, observe that, independently of the value of $n$, we can always add a new row of squares at the bottom of the grid.   Notice that we are extending the largest convex set of the grid, but the complement of such set remains unchanged.   Hence, if we consider the aforementioned orientations of the $5 \times 5$ grid, we can conclude that $6, 7, 8 \in S_{SC} (P_5 \Box P_m)$ for every integer $m \ge 5$.

To add a new column of squares we have two different behaviors.   First, consider the orientations for $k=6$ and $k=7$.   In this case, to add a new column, we need to change the direction of one arc, as in the $5 \times 4$ and $5 \times 5$ grids for the $k=6$ argument.   But, to add further columns we will not need to change the direction of any arc, just add a column of squares oriented as a whirlpool.   Following this procedure we will obtain an orientation of the grid $P_n \Box P_m$ for every pair of integers $n, m \ge 5$, for $k=6$ and $k=7$, respectively.   In the other hand, when $k=8$, adding a new column will not preserve the complement of our largest convex set, the complement will grow larger.   If we add a new column, following the pattern as in the example in Figure \ref{n*m-c=nm-kFig}, we will obtain an orientation of the grid $P_n \Box P_m$ for every pair of integers $n,m \ge 5$, for $k=2n-2$.

We can generalize the orientations used for $k=6$ and $k=7$, respectively, to obtain orientations of the grid $P_n \Box P_m$ for $k = 2n-4$ and $2n-3$.   The idea is to extend the pattern horizontally, we will explain how it is done for the case $k=2n-4$, the remaining case is analogous.   We want to preserve the $2_m n_{m-2}$ directed path defined by the sequence $(d,d,r,r, \dots, r)$, and the whirlpool (or anti-whirlpool) of $n-2$ squares in the upper right corner of the grid.   The square of the upper left corner of the grid is oriented as a whirlpool and the rest of the grid is oriented as a whirlpool or anti-whirlpool, except for some alternating squares adjacent to the aforementioned directed path.   The remaining unoriented edges are oriented as $(1_{m-2}, 1_{m-1})$ and, if it is still unoriented, $(n_{m-2}, n_{m-3})$.   Following a similar idea, orientations for every grid $P_n \Box P_m$ for any pair of integers $n \ge m$ and any $k \in \{ 6, 7 \dots, 2n-2 \}$ can be obtained.   We depict the orientations for $P_7 \Box P_5$ for $k \in \{ 6, 7, 8, 9, 10, 11, 12 \}$ in Figure \ref{n*m-c=nm-kFig}.   Since all these orientations follow a similar pattern, it can be easily proved that each one has convexity number $nm-k$, as desired.

Finally, since we are assuming $n \ge m$, we have $2n-1 \ge n+m-1$, and the case $k \ge n+m-1$ is covered by Lemma \ref{n*m-c=ab-k-l}.

\begin{figure}
\begin{center}
\includegraphics[width=\textwidth]{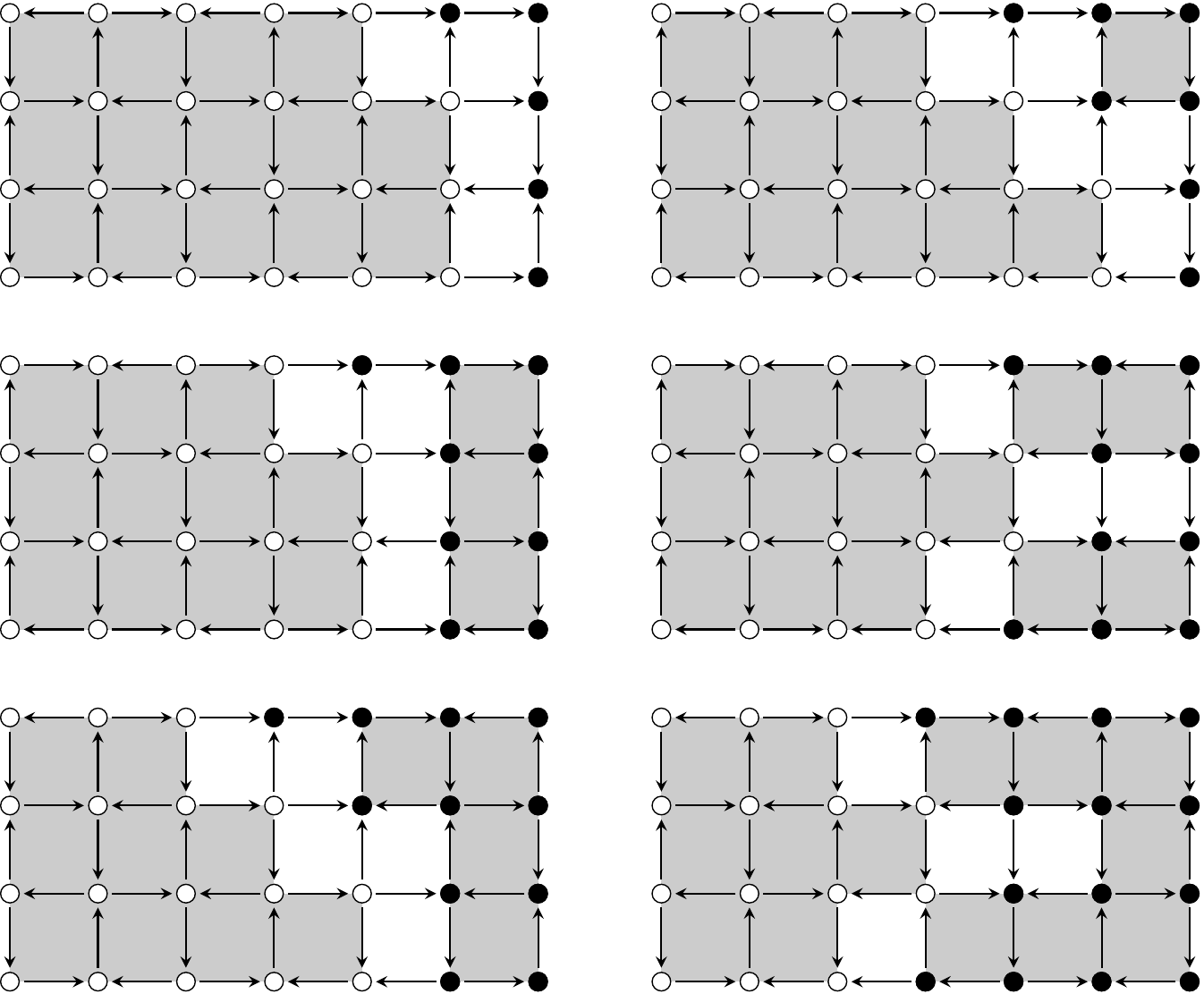}
\caption{The orientations used in the proof of Lemma \ref{n*m-c=nm-k} for $m=4$.} \label{n*m-c=nm-kFig2}
\end{center}
\end{figure}

Now, consider the case $m=4$.   If $k \equiv 0$ (mod 4), then Lemma \ref{n*m-c=nb} gives the desired orientation.   Else, we give basic orientations that can be enlarged adding columns of whirlpools, either to the largest convex set, or to its complement.   The orientation for $k=6$ is the orientation used for the $4 \times 4$ grid (Figure \ref{5*4-c=nm-kFig}).   Clearly, additional rows of whirlpools can be added in the bottom of the grid.   We give the orientations for $k \in \{ 5, 7, 9, 10, 11, 14\}$ in Figure \ref{n*m-c=nm-kFig2}.   Again, the set of black vertices is the complement of the largest set of the oriented graph.   Since it is easy to observe that the given orientations have the desired properties, and when we extend them the arguments remain the same, this concludes the proof ot the lemma.
\end{proof}

We present now the main theorem of this work.

\begin{theorem}
Let $4 \le m \le n$ be a pair of integers and let $G = P_n \Box P_m$ be a grid.
\begin{itemize}
	\item If $m=4$, then $S_{SC} (G) = [1, nm-4] \setminus \{ 2, 3, 5 \}$.
	\item If $m=5$, then $S_{SC} (G) = [1, nm-5] \setminus \{ 2, 3, 5 \}$.
	\item If $m \ge 6$, then $S_{SC} (G) = [1, nm-6] \setminus \{ 2, 3, 5 \}$.
\end{itemize}
\end{theorem}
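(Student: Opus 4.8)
The plan is to prove the two inclusions separately; assume throughout that $m\le n$, so $n\ge m\ge 4$, and write $r=4$ if $m=4$, $r=5$ if $m=5$, and $r=6$ if $m\ge 6$ (note $r=\min\{m,6\}$). For the inclusion $S_{SC}(G)\subseteq[1,nm-r]\setminus\{2,3,5\}$ I would argue as follows. Theorem \ref{no2} and the first item of Lemma \ref{forbid} exclude $2$, $3$, $5$ and $nm-1$ from $S_{SC}(G)$; applying the second item of Lemma \ref{forbid} for every integer $i$ with $3\le i\le\min\{m,6\}$ (legitimate because $n\ge m\ge i$) excludes $nm-2,\dots,nm-(r-1)$. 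Hence no strong orientation of $G$ has convexity number exceeding $nm-r$, so $S_{SC}(G)\subseteq[1,nm-r]\setminus\{2,3,5\}$.

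For the reverse inclusion I would assemble $[1,nm-r]\setminus\{2,3,5\}$ from the construction lemmas in blocks. First, $1\in S_{SC}(G)$ by Corollary \ref{con=1}, $4\in S_{SC}(G)$ by Lemma \ref{n*m-c=4}, and $6=2\cdot 3\in S_{SC}(G)$ by Lemma \ref{n*m-c=ab} (take $a=2$, $b=3$; the hypotheses $2\le n-1$, $3\le m-1$ hold). Next, for the middle block $[7,(n-1)(m-1)]$ I would use Lemma \ref{n*m-c=ab-k-l}: for each pair $(a,b)$ with $3\le a\le n-1$ and $3\le b\le m-1$ that lemma places the whole interval $[ab-(a+b-4),\,ab]$ in $S_{SC}(G)$. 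Fixing $b$ and letting $a$ run over $\{3,\dots,n-1\}$, consecutive intervals overlap because the left endpoint of the $(a{+}1)$-interval equals $ab-a+3\le ab$ whenever $a\ge 3$; hence the union over $a$ is $[2b+1,\,(n-1)b]$. Then, letting $b$ run over $\{3,\dots,m-1\}$, consecutive blocks again overlap (the required inequality $2(b{+}1)+1\le (n-1)b+1$ is just $2\le (n-3)b$, valid for $b\ge 3$ and $n\ge 4$), so the union of all these intervals is exactly $[7,(n-1)(m-1)]$.

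For the top block, Lemma \ref{n*m-c=nm-k}, with $\max\{n,m\}=n$, gives $nm-k\in S_{SC}(G)$ for all $6\le k\le 2n-1$, i.e.\ the interval $[nm-2n+1,\,nm-6]\subseteq S_{SC}(G)$. Because $m\le n$ we have $nm-2n+1\le nm-n-m+1=(n-1)(m-1)$, so this block meets the middle block, and together with $\{1,4,6\}$ they give $[1,nm-6]\setminus\{2,3,5\}\subseteq S_{SC}(G)$ --- which already finishes the case $m\ge 6$. When $m\in\{4,5\}$ I would still need $nm-5$ and, when $m=4$, also $nm-4$: I would observe that $nm-m=m(n-1)$ equals $nm-4$ if $m=4$ and $nm-5$ if $m=5$, and that $m(n-1)\in S_{SC}(G)$ by Lemma \ref{n*m-c=nb} applied to $G$ read as $P_m\Box P_n$ with $b=n-1$ (note $n-1\ge\lceil n/2\rceil$); and, for $m=4$, the value $nm-5$ is supplied directly by the ``$m=4$'' clause of Lemma \ref{n*m-c=nm-k}. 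Adjoining these values to $[1,nm-6]\setminus\{2,3,5\}$ yields $[1,nm-r]\setminus\{2,3,5\}\subseteq S_{SC}(G)$ in all three cases, and with the first paragraph this completes the proof.

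The main obstacle is not any single construction --- all of them are already in hand --- but the interval bookkeeping: one must check that the $ab$-intervals produced by Lemma \ref{n*m-c=ab-k-l} tile $[7,(n-1)(m-1)]$ with no gaps and that this block abuts the top block $[nm-2n+1,nm-6]$, using only the hypothesis $m\le n$. The one genuinely delicate point is the pair of corner values $nm-4$ (for $m=4$) and $nm-5$ (for $m=5$): these sit just above the range $6\le k\le 2n-1$ of the ``$nm-k$'' lemma for general $n$, so they cannot be obtained from Lemma \ref{n*m-c=nm-k} alone and instead force one to transpose the grid and apply Lemma \ref{n*m-c=nb}. Everything else reduces to direct citations of the lemmas already established.
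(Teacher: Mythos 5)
Your proposal is correct and follows essentially the same route as the paper: Lemma \ref{forbid} for the exclusions, Corollary \ref{con=1} for the value $1$, Lemma \ref{n*m-c=nm-k} for the top block $[nm-2n+1,nm-6]$, and Lemmas \ref{n*m-c=4}, \ref{n*m-c=ab}, \ref{n*m-c=ab-k-l} and \ref{n*m-c=nb} for everything up to $(n-1)(m-1)$. Your explicit interval bookkeeping, and in particular your handling of the corner values $nm-4$ (for $m=4$) and $nm-5$ (for $m=5$) via the transposed Lemma \ref{n*m-c=nb}, is actually more careful than the paper's one-line appeal to Lemma \ref{n*m-c=nm-k} for all $r\ge nm-2n+1$, whose stated range $6\le k\le 2\max\{n,m\}-1$ does not literally cover $k=4$ or (for $m=5$) $k=5$.
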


\begin{proof}
The excluded values are a consequence of Lemma \ref{forbid}.   Corollary \ref{con=1} shows $1 \in S_{SC} (G)$.   Let $r$ be an integer $4 \le r \le nm-i$, for $i \in \{ 4, 5, 6 \}$.   The three cases are dealt similarly.

If $nm-2n+1 \le r$, then Lemma \ref{n*m-c=nm-k} implies that $r \in S_{SC} (G)$.   Otherwise $r \le nm-2n \le (n-1)(m-1)$, and it follows from Lemmas \ref{n*m-c=4}, \ref{n*m-c=ab}, \ref{n*m-c=nb} and \ref{n*m-c=ab-k-l}, that $r \in S_{SC} (G)$.

\end{proof}

As a final remark, the reader might have noticed that the proofs are
lengthy and technical, which makes them hard to follow.   It would be
a good problem to find a short proof for the main theorem of this article.

\end{document}